\newtheorem{theorem}{Theorem}[section]
\newtheorem{lemma}[theorem]{Lemma}
\newtheorem{proposition}[theorem]{Proposition}
\newtheorem{corollary}[theorem]{Corollary}
\newtheorem*{theorem*}{Theorem}
\theoremstyle{definition}
\newtheorem{definition}[theorem]{Definition}
\newtheorem{example}[theorem]{Example}
\theoremstyle{remark}
\newtheorem{remark}[theorem]{Remark}
\numberwithin{equation}{section}
\theoremstyle{definition}
\theoremstyle{remark}
\numberwithin{equation}{section}
\DeclareMathOperator{\id}{Id}
\DeclareMathOperator{\ad}{ad}
\newcommand{\C}{{\mathbb{C}}}
\newcommand{\R}{{\mathbb{R}}}
\newcommand{\w}{\wedge}
\DeclareMathOperator{\re}{Re}
\DeclareMathOperator{\im}{Im}
\DeclareMathOperator{\Span}{Span}
\begin{document}
\title[Torsion flow]{The torsion flow on a closed pseudohermitian $3$-manifold}
\author{Shu-Cheng Chang$^{ }$}
\address{$^{1}$Department of Mathematics and Taida Institute for
Mathematical Sciences, National Taiwan University, Taipei 10617, Taiwan}
\email{scchang@math.ntu.edu.tw }
\author{Otto van Koert$^{2}$}
\address{$^{2}$Department of Mathematics and Research Institute of
Mathematics, Seoul National University, Building 27, room 402, San 56-1,
Sillim-dong, Gwanak-gu, Seoul, South Korea, Postal code 151-747}
\email{okoert@snu.ac.kr}
\author{Chin-Tung Wu$^{3}$}
\address{$^{3}$Department of Applied Mathematics, National PingTung
University of Education, PingTung 90003, Taiwan}
\email{ctwu@mail.npue.edu.tw }

\subjclass[2010]{53C21, 32G07,53D10}
\keywords{Sublaplacian, Ricci flow, Torsion flow, Breathers, Entropy
functionals, Monotonicity formula, Tanaka-Webster curvature, Pseudohermitian
torsion}

\begin{abstract}
In this paper we define the torsion flow, a CR analogue of the Ricci flow. 
For homogeneous CR manifolds we give explicit solutions to the torsion flow illustrating various kinds of behavior. 
We also derive monotonicity formulas for CR entropy functionals. 
As an application, we classify torsion breathers.
\end{abstract}

\maketitle

\section{Introduction}

The Ricci flow, introduced by Hamilton, is a geometric flow for metrics on $%
3 $-manifolds, and has played a decisive role in the proof of the
Poincar\'e conjecture and Thurston's geometrization conjecture for $3$%
-manifolds. It is natural to then investigate a corresponding problem for
contact $3$-manifolds. One of way of doing this is to find a CR analogue of
the Ricci flow on a pseudohermitian $3$-manifold (see Section~\ref%
{sec:definitions} for definitions and basic notions in
pseudohermitian geometry).

Recall that a strictly pseudoconvex CR structure on a $3$-manifold $M$ is given by a cooriented
plane field $\ker \theta$, where $\theta$ is a contact form, together with a compatible complex structure $J$.
This gives rise to a natural metric $g=\theta \otimes \theta+d\theta(\cdot
,J \cdot)$ for $M$. Given this data, there is a natural connection, the so-called Tanaka-Webster connection or pseudohermitian connection. We denote the
torsion of this connection by $A_{J,\theta}$, and the Webster curvature, a kind of scalar curvature, by $W$. The torsion flow is then the following PDE, 
\begin{equation}
\left\{ 
\begin{array}{l}
\partial _{t}J_{(t)}=2A_{J_{(t)},\theta _{(t)}}, \\ 
\partial _{t}\theta _{(t)}=-2W\theta _{(t)}.%
\end{array}%
\right.  \label{eq:torsion_flow}
\end{equation}%
It seems to us that the torsion flow \eqref{eq:torsion_flow} is the right CR
analogue of the Ricci flow.

The torsion flow greatly simplifies if the torsion vanishes. This only
happens in very special setups. Indeed, CR $3$-manifolds with vanishing
torsion are $K$-contact, meaning that the Reeb vector field is a Killing
vector field for the metric $g$. In general, one can still hope that the
torsion flow improves properties of the contact manifold underlying the CR-manifold. 

The mostly used tools in the study of Hamilton's Ricci flow \cite{h1}
consist of maximum principles. Exceptions are formed by Hamilton's entropy
formula which holds for closed surfaces with positive Gaussian curvature 
\cite{h2}, and also by Perelman's entropy formulas \cite{pe}. These formulas
can be thought of as monotonicity formulas for integrals of local geometric
quantities.

In this paper, we try to do the same for the torsion flow by setting up some
monotonicity formulas for Perelman-type functionals.
We conclude this introduction with a brief plan of the paper.
\begin{itemize}
\item In Section~\ref{sec:motivation} and Section~\ref{sec:statement_monotonicity_results} we motivate the definition of the torsion flow and give more precise statements of our results.
\item In Section~\ref{sec:definitions} we survey basic notions in CR geometry.
\item In Sections~\ref{section:Tanaka_connection_global_frame} and \ref{section:homogeneous_contact_manifolds} we describe CR manifolds with a global coframe and we also define homogeneous CR manifolds. On the latter class the torsion flow reduces to an ODE if we start with some appropriate initial conditions.
These computations illustrate the behavior of the torsion flow in special cases, and in these cases the torsion
flow behaves as can be expected from a Ricci-like flow.
\item Finally, in Section~\ref{sec:entropy} we discuss analogues of Perelman's entropy formulas.
\end{itemize}

\noindent
{\bf Acknowledgements}
We thank Jih-Hsin Cheng for valuable discussions, and we are in particular grateful to his contributions on the entropy formulas, where he suggested the correct formulation of entropy; he also helped us out with the linearized operator in our attempt to prove short time existence.

The research of SCC and CTW is supported in part by NSC of Taiwan.
OvK is supported by NRF Grant 2012-011755 funded by the Korean government.

\subsection{Motivation for the torsion flow and statement of results}
\label{sec:motivation}
For the basic definitions and notions involved, we refer the reader to
Section~\ref{sec:definitions}. Consider a closed $2n+1$-manifold $M$, with a
smooth family of pseudohermitian structures $(J_{(t)},\theta_{(t)})$ for
which $J_{(t)}$ is compatible with $d\theta_{(t)}$: this means that 
\begin{equation}  \label{eq:herm_metric}
H_{(t)}:=d\theta_{(t)}(\cdot, J_{(t)} \cdot )-id\theta(\cdot,\cdot)
\end{equation}
forms a hermitian metric on the complex vector bundle $(\xi_{(t)}=\ker
\theta_{(t)},J_{(t)})$.

Furthermore, $H_{(t)}$ induces a metric on all tensor fields. We shall use
these metrics and the induced norms without explicitly referring to $H_{(t)}$%
. Throughout the paper, we only consider a fixed contact structure, i.e.~$%
\xi_{(t)}=\ker \theta_{(t)}$ is independent of $t$. Henceforth, we just
write $\xi$.

Take a local orthonormal frame $\{T,Z_{\alpha},Z_{\bar{\beta}}\}$, where $T$ is the
Reeb field, $\{ Z_{\alpha} \}$ is a basis of $(\xi \otimes \mathbb{C})^{1,0}$, and $\{ Z_{\bar{\beta}} \}$ is a basis of $(\xi \otimes \mathbb{C})^{0,1}$. Then we write 
$J=i\theta ^{\alpha}\otimes Z_{\alpha}-i\theta ^{\overline{\alpha}}\otimes Z_{\overline{\alpha}}$%
. Define $E=E_{\alpha}{}^{\bar \beta}\theta ^{\alpha}\otimes Z_{\bar \beta}+E_{\bar \alpha}{}^{\beta}\theta ^{\bar \alpha}\otimes Z_{\beta}$, and consider the general flow on $(M,J,\theta )\times \lbrack 0,T)$ given by 
\begin{equation}
\left\{ 
\begin{array}{l}
\partial _{t}J_{(t)}=2E, \\ 
\partial _{t}\theta _{(t)}=2\eta _{(t)}\theta _{(t)}.%
\end{array}%
\right.  \label{eq:general_torsion_flow}
\end{equation}%
The CR Einstein-Hilbert functional is defined by 
\begin{equation*}
\mathcal{E}(J_{(t)},\theta _{(t)})=\int_{M}Wd\mu .
\end{equation*}%
Here $d\mu =\theta \wedge d\theta^n $ is the volume form and $W$ denotes
Tanaka-Webster curvature. 
From variation formulas for the Webster curvature and the measure, see the appendix and \eqref{18a}, it follows that 
\begin{equation*}
\begin{array}{ccl}
\frac{d}{dt}\mathcal{E}(J_{(t)},\theta _{(t)}) & = & -\int_{M}\{(A^{\bar \alpha}{}_{\beta}E^{\beta}{}_{\bar{\alpha}}
+A^{\alpha}{}_{\bar{\beta}}E^{\bar \beta}_{}{\alpha})-2\eta W\}d\mu \\ 
& = & -2\int_{M}(\Vert A_{J,\theta } \Vert^{2}+W^{2})d\mu \\ 
& \leq & 0%
\end{array}%
\end{equation*}%
if we put $E=A_{J,\theta }$ and $\eta _{(t)}=-W_{(t)}$. 
Here $A_{J,\theta }:=A^{\bar \beta}{}_{\alpha}Z_{\bar \beta}\otimes \theta^{\alpha}+
A^{\beta}{}_{\bar \alpha}Z_{\beta}\otimes \theta^{\bar \alpha}
$ denotes the torsion tensor. 
It is therefore natural to
consider the torsion flow on $M\times \lbrack 0,T)$ as defined in %
\eqref{eq:torsion_flow}. Unfortunately, we do not know whether a
short-time solution to the torsion flow \eqref{eq:torsion_flow} exists in
general.

For the very special class of homogeneous CR manifolds we define in Section~\ref{section:homogeneous_contact_manifolds} we show a short-time existence result.
Furthermore, we show the following convergence result.
\begin{theorem}[Convergence to torsion free CR structure]
Let $(M,\{ \omega^i \}_i,\theta=\omega^1)$ be a homogeneous contact manifold whose Lie algebra is isomorphic to $su(2)$.
Then there is a unique homogeneous complex structure $J_{a_\infty,b=1,c_\infty}$ that is torsion free.
Moreover, for any choice of homogeneous complex structure $J_{a,b=1,c}$, the normalized torsion flow converges to this unique CR-structure $(\ker \theta, J_{a_\infty,b=1,c_\infty})$.

In particular, for any choice of homogeneous complex structure on $SU(2)$, the normalized torsion flow converges to the standard CR-structure.
\end{theorem}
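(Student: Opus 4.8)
The plan is to exploit the homogeneity of $(M,\{\omega^i\}_i,\theta=\omega^1)$ to reduce the torsion flow \eqref{eq:torsion_flow} to a finite-dimensional ODE. First I would record the structure equations of the coframe corresponding to the bracket relations of $su(2)$ and write an arbitrary homogeneous compatible complex structure on the contact distribution $\ker\theta$ in the normal form $J_{a,b,c}$. Since the pseudohermitian torsion $A_{J,\theta}$ and the Webster curvature $W$ are built algebraically from $J$ and the (constant) structure constants, they are invariant under the transitive group action and hence constant on $M$; substituting the ansatz into \eqref{eq:torsion_flow} therefore collapses both equations into a system of ordinary differential equations for the scalars $(a,b,c)$. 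The equation $\partial_t\theta=-2W\theta$ only rescales the contact form, so it contributes an overall homothety; the normalized flow is obtained by dividing out this scaling, which amounts to renormalizing so that $b\equiv 1$, leaving an autonomous planar ODE in the two essential parameters $(a,c)$.

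Next I would determine the torsion-free locus. Imposing $A_{J,\theta}=0$ in the explicit expression from the previous step gives algebraic equations in $(a,b,c)$ whose solutions are precisely the fixed points of the normalized flow. Here I expect to show that, after fixing $b=1$, there is exactly one solution $(a_\infty,1,c_\infty)$; this is the unique homogeneous torsion-free (equivalently $K$-contact) structure, the uniqueness reflecting the rigidity of the Killing condition for the Reeb field on $su(2)$.

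To obtain convergence I would set up a Lyapunov/monotonicity argument. The natural candidate comes from the CR Einstein-Hilbert functional $\mathcal{E}$ together with the monotonicity $\frac{d}{dt}\mathcal{E}=-2\int_M(\|A_{J,\theta}\|^2+W^2)\,d\mu\le 0$ derived in the introduction; on the homogeneous class all integrands are spatially constant, so after normalization this yields a proper Lyapunov function for the planar system whose only critical point in the admissible region is the torsion-free structure. I would then either invoke LaSalle's invariance principle or, since the reduced system is completely explicit, carry out a direct phase-plane analysis to conclude that every trajectory converges to $(a_\infty,c_\infty)$. The main obstacle I anticipate is precisely this convergence step: one must verify that the unique torsion-free fixed point is a global attractor, which requires controlling the behavior of trajectories near the boundary of the moduli space of compatible complex structures (where $J$ degenerates) and confirming that the Lyapunov function is proper there, so that no trajectory escapes.

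Finally, for $M=SU(2)$ I would identify the unique torsion-free structure with the standard CR structure on $S^3=SU(2)$. The round sphere is $K$-contact, so its standard CR structure has vanishing pseudohermitian torsion; by the uniqueness established above it must coincide with $(\ker\theta,J_{a_\infty,1,c_\infty})$. Consequently every normalized torsion flow starting from a homogeneous complex structure on $SU(2)$ converges to the standard one, which gives the final assertion.
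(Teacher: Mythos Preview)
Your reduction of the torsion flow to a planar ODE in $(a,c)$ and the identification of the unique torsion-free fixed point match the paper exactly: these are precisely Proposition~\ref{proposition:torsion_flow_simple_PDE}, the normalized-flow ODE~\eqref{eq:normalized_torsion_flow_homogeneous}, and Proposition~\ref{prop:existence_torsion_free_connection} (for $su(2)$ one has $c^3_{12}>0$, $c^2_{13}<0$, $c^2_{12}=0$, so only case~1 with $a=0$, $c=\sqrt{-c^3_{12}/c^2_{13}}$ survives).

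For the convergence step the paper takes the direct phase-plane route you mention only as a fallback. Writing $\dot a = a\bigl(c^2_{13}c - c^3_{12}\tfrac{a^2+1}{c}\bigr)$, both summands in the bracket are negative for the $su(2)$ signs, so $|a|$ is strictly decreasing and must tend to $0$; inserting $a\to 0$ into the $c$-equation then gives $\dot c \to c^2_{13}c^2 + c^3_{12}$, which drives $c$ to $c_\infty$. This is Proposition~\ref{prop:convergence_torsion_flow}, case~A.

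Your primary Lyapunov route via $\mathcal{E}$ also works, with one correction: the identity $\tfrac{d}{dt}\mathcal{E}=-2\int(\|A\|^2+W^2)\,d\mu$ from the introduction is for the \emph{unnormalized} flow. Under the normalized flow in the homogeneous setting one has $\eta=-(W-\bar W)=0$, the measure is fixed, and the variation formula (Appendix, \eqref{eq:variationW} with $\eta=0$ and constant $A$) gives $\dot W=-2\|A\|^2$. Since $W=\tfrac{a^2+1}{2c}c^3_{12}+\tfrac{c}{2}|c^2_{13}|$ is manifestly proper on $\{c>0\}$ and $\|A\|^2$ vanishes only at the fixed point, LaSalle's principle yields global convergence. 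This approach is more systematic and handles the boundary behavior you flagged automatically; the paper's one-variable monotonicity is quicker but leans on the specific sign pattern of the $su(2)$ structure constants.
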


In the next section we shall discuss somewhat technical results concerning
monotonicity properties of Perelman type functionals. 
As an application of these monotonicity results, we classify torsion breathers and solitons. 
The classification of torsion solitons is a necessary step in understanding the singularity formation in the torsion flow. 
Indeed, one expects the
torsion soliton solutions to model finite time singularities of the torsion
flow. In view of the flow \eqref{000a} and original definition in \cite{pe},
it is natural to define the soliton solutions for torsion flow %
\eqref{eq:torsion_flow} as follows.

\begin{definition}
(i) A family $J_{(t)}$ of CR structures on $(M,\theta ,J)$ evolving by the
torsion flow \eqref{eq:torsion_flow} is called a \textbf{breather} if for
some $t_{1}<t_{2}$ and $\delta >0$, there is a contact diffeomorphism $\Phi$ such that
\begin{itemize}
\item $\Phi^*J_{t_1}=J_{t_2}$.
\item $\theta_{(t_{2})}=\lambda \Phi ^{\ast }\theta _{(t_{1})}$.
\end{itemize}
The cases $\lambda =1,\
\lambda <1,\ \lambda >1$ are called  \textbf{steady}, \textbf{shrinking}
or \textbf{expanding} breathers, respectively.

(ii) A breather satisfying the above properties for all pairs of $t_{1}$ and $%
t_{2}$ of real numbers is called a \textbf{torsion soliton}.
\end{definition}

Ideas of Perelman \cite{pe} (see also \cite{ca} and \cite{li}) can be
combined with Theorem \ref{t2}, Theorem \ref{t3} and Theorem \ref{t4} to
show the following classification result.

\begin{corollary}
\label{c51} Let $(M,J,\theta )$ be a closed pseudohermitian $3$-manifold.
Then

(i) there is no closed steady torsion soliton other than the one which
admits zero Tanaka-Webster curvature and vanishing pseudohermitian torsion
up to a contact transformation.

(ii) there is no closed expanding torsion soliton other than the one which
admits negative Tanaka-Webster curvature and vanishing pseudohermitian
torsion.

(iii) there is no closed shrinking torsion soliton other than the one which
admits positive Tanaka-Webster curvature and vanishing pseudohermitian
torsion.
\end{corollary}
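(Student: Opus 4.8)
The plan is to adapt Perelman's breather-classification scheme \cite{pe} to the torsion flow, using as input the monotonicity formulas of Theorems~\ref{t2}, \ref{t3} and \ref{t4}. The guiding principle is that each of these formulas writes the time-derivative of a suitably normalized Perelman-type functional as the integral of a non-negative quantity which vanishes precisely on a gradient-soliton configuration. Since these functionals are invariant under contact diffeomorphisms and transform homogeneously under a constant rescaling of $\theta$, the defining self-similarity of a breather forces the relevant normalized functional to take equal values at $t_1$ and $t_2$. Monotonicity between these two times then pins the functional to a constant on $[t_1,t_2]$, and the equality case of the monotonicity formula yields the soliton equation together with the sign of $W$ and the vanishing of the torsion $A_{J,\theta}$. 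As a soliton is a breather for every pair $t_1<t_2$, the conclusions then hold for all time.

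For the steady case~(i) I would use the infimum functional $\Lambda(J,\theta)=\inf_f \mathcal{F}(J,\theta,f)$ obtained by minimizing the $\mathcal{F}$-type energy of Theorem~\ref{t2} over the auxiliary weight $f$, normalized so that $\int_M e^{-f}\,d\mu=1$. Because $\Lambda$ is a contact invariant, a steady breather ($\lambda=1$) satisfies $\Lambda(t_1)=\Lambda(t_2)$, so by monotonicity $\Lambda$ is constant on $[t_1,t_2]$ and the steady torsion-soliton equation holds. Its torsion part gives $A_{J,\theta}\equiv 0$, while its Webster part, after integration by parts on the closed manifold $M$, forces $\int_M W\,e^{-f}\,d\mu=0$; together with the equality-case rigidity this yields $W\equiv 0$, which is statement~(i).

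The expanding case~(ii) is analogous, but with $\Lambda$ replaced by an appropriate scale-invariant normalization (the CR analogue of Perelman's scale-invariant lowest-eigenvalue invariant, equivalently a $\nu$-type invariant), whose monotonicity is supplied by Theorem~\ref{t3}. An expanding breather ($\lambda>1$) forces this invariant to be constant, and the equality case produces the expanding torsion-soliton equation, whose torsion part again gives $A_{J,\theta}\equiv 0$, while the expanding normalization dictates the sign $W<0$. The shrinking case~(iii) is handled symmetrically using Perelman's $\mathcal{W}$-type functional and its invariant $\mu(\tau)$ from Theorem~\ref{t4}: reparametrizing by the backward time $\tau$, monotonicity of $\mu$ together with the shrinking breather condition ($\lambda<1$) forces the shrinking torsion-soliton equation, hence $A_{J,\theta}\equiv 0$ and $W>0$.

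The main obstacle is the equality-case analysis in the CR setting. In Riemannian Ricci flow the monotonicity integrand is a single perfect square $|{\operatorname{Ric}}+\operatorname{Hess}f|^2$, so its vanishing is immediate to interpret; here the integrand couples a torsion term to a Webster-curvature term, and one must complete squares carefully to exhibit it as a genuine sum of non-negative pieces whose simultaneous vanishing forces both $A_{J,\theta}=0$ and the correct scalar soliton equation, rather than merely a single linear relation between them. A secondary, analytic difficulty is that the minimizer $f$ is produced by the spectral theory of the sublaplacian $\Delta_b$, which is only subelliptic; consequently the existence, positivity and smoothness of $f$, as well as the strong-maximum-principle step that upgrades the integral identities to pointwise vanishing, must be carried out with subelliptic estimates and the CR Sobolev embedding in place of their elliptic counterparts.
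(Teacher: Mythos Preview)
Your proposal is essentially the same as the paper's approach: the paper itself gives no detailed proof of Corollary~\ref{c51}, stating only that ``Ideas of Perelman~\cite{pe} (see also~\cite{ca} and~\cite{li}) can be combined with Theorem~\ref{t2}, Theorem~\ref{t3} and Theorem~\ref{t4}'' to obtain the result, and your sketch is exactly this combination.

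Two remarks are worth making. First, the ``main obstacle'' you identify---completing squares so that the monotonicity integrand is a genuine sum of non-negative pieces---is not an obstacle at all: Theorems~\ref{t2}--\ref{t4} already present the time-derivative as a sum of two \emph{independent} squares, namely $(2\Delta_b\varphi - |\nabla_b\varphi|^2 + W \mp \tau^{-1})^2$ and $|A_{11} - i\varphi_{11} - i\varphi_1\varphi_1|^2$, so their simultaneous vanishing immediately gives both the scalar soliton equation and $\widetilde{A}_{11}=0$ (i.e.\ vanishing torsion after the conformal change $\widetilde\theta = e^{-\varphi}\theta$, which is precisely how the corollary is phrased). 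Second, you appear to have the roles of Theorems~\ref{t3} and~\ref{t4} interchanged: the equality case of Theorem~\ref{t3} yields $\widetilde{W} = \tau^{-1}e^{\varphi} > 0$ and so pertains to the shrinking case~(iii), while Theorem~\ref{t4} yields $\widetilde{W} = -\tau^{-1}e^{\varphi} < 0$ and handles the expanding case~(ii). Your second analytic concern---existence and regularity of the minimizing weight $\varphi$ via subelliptic theory---is a legitimate technical point which the paper does not address either.
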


\subsection{Some monotonicity results for Perelman-type functionals}

\label{sec:statement_monotonicity_results} The statements in this section
are of a more technical nature: we will derive the CR analogue of Perelman's
monotonicity formulas for the so-called coupled torsion flows \eqref{eq:coupled_torsion_flow}, \eqref{2008-3} and \eqref{2008-2} in dimension $3$.

In Section \ref{sec:F_functional} we define the CR analogue of Perelman's $%
\mathcal{F}$-functional by 
\begin{equation*}
\begin{array}{c}
\mathcal{F}(J_{(t)},\theta _{(t)},\varphi _{(t)})=\int_{M}(W+\left \vert
\nabla _{b}\varphi \right \vert _{J,\theta }^{2})e^{-\varphi }d\mu%
\end{array}%
\end{equation*}%
with the constraint 
\begin{equation*}
\begin{array}{c}
\int_{M}e^{-\varphi }d\mu =1.
\end{array}%
\end{equation*}%
Under the flow \eqref{eq:general_torsion_flow}, this is equivalent to  
\begin{equation*}
\begin{array}{c}
\int_{M}(\varphi _{t}-4\eta _{(t)})e^{-\varphi }d\mu =0.
\end{array}%
\end{equation*}%
Therefore, the following coupled torsion flow is natural,
\begin{equation}
\left \{ 
\begin{array}{l}
\partial _{t}J_{(t)}=2E, \\ 
\partial _{t}\theta _{(t)}=2\eta _{(t)}\theta _{(t)}, \\ 
\partial _{t}\varphi _{(t)}=4\eta _{(t)},%
\end{array}%
\right.  \label{eq:coupled_torsion_flow}
\end{equation}%
with $E_{11}=e^{\varphi }(A_{11}-i\varphi _{11}-i\varphi _{1}\varphi _{1})\ $%
and $\eta _{(t)}=e^{\varphi }(2{\Delta _{b}}\varphi -\left \vert \nabla
_{b}\varphi \right \vert _{J,\theta }^{2}+W).$

\begin{theorem}
\label{t2} Let $(M,J,\theta )$ be a closed $3$-dimensional pseudohermitian
manifold and $J_{(t)},\theta _{(t)},\varphi _{(t)}$ be a solution of the
coupled torsion flow \eqref{eq:coupled_torsion_flow} on $M\times \lbrack 0,T).$ Then 
\begin{equation*}
\begin{array}{ccl}
\text{{\small $\frac{d}{dt}\mathcal{F}(J_{(t)},\theta _{(t)},\varphi _{(t)})$%
}} & = & -2\int_{M}(2{\Delta _{b}}\varphi -\left \vert \nabla _{b}\varphi
\right \vert _{J,\theta }^{2}+W)^{2}d\mu \\ 
&  & -2\int_{M}|A_{11}-i\varphi _{11}-i\varphi _{1}\varphi _{1}|^{2}d\mu \\ 
& \leq & 0.%
\end{array}%
\end{equation*}%
The monotonicity formula is strict unless%
\begin{equation*}
A_{11}-i\varphi _{11}-i\varphi _{1}\varphi _{1}=0\ \ \ \ \ \mathrm{and\ }\ \
\ 2{\Delta _{b}}\varphi -\left \vert \nabla _{b}\varphi \right \vert
_{J,\theta }^{2}+W=0.
\end{equation*}%
That is, up to a contact transformation $\widetilde{\theta }=e^{-\varphi
}\theta $%
\begin{equation*}
\text{\ }\widetilde{A_{11}}=0\ \ \ \ \mathrm{and}\ \ \ \widetilde{W}=0.
\end{equation*}
\end{theorem}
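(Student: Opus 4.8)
The plan is to compute $\frac{d}{dt}\mathcal{F}$ directly by differentiating under the integral sign, using the constraint to eliminate awkward boundary-type terms, and then to complete the square so that the right-hand side manifestly becomes a sum of two nonpositive quantities. Since $\mathcal{F}(J_{(t)},\theta_{(t)},\varphi_{(t)}) = \int_M (W + |\nabla_b \varphi|^2_{J,\theta}) e^{-\varphi}\, d\mu$, I would first assemble the three variational ingredients along the flow \eqref{eq:coupled_torsion_flow}: the first variation of the Webster curvature $W$ under $\partial_t J = 2E$ and $\partial_t \theta = 2\eta_{(t)}\theta_{(t)}$ (available from the appendix and \eqref{18a}), the variation of the measure $d\mu$, and the variation of the Dirichlet energy term $|\nabla_b\varphi|^2 e^{-\varphi}$ driven by $\partial_t \varphi = 4\eta_{(t)}$. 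These are exactly the data the coupled flow was designed to balance, so I expect the constraint $\int_M(\varphi_t - 4\eta_{(t)})e^{-\varphi}d\mu = 0$ to be automatically respected and to allow me to discard the total-derivative pieces.

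Next I would perform integration by parts to move derivatives off $\varphi_t$ and onto the weight $e^{-\varphi}$, converting $\Delta_b$ terms into the combination $2\Delta_b\varphi - |\nabla_b\varphi|^2_{J,\theta}$ that appears in the statement; this is the natural object because it is precisely $\eta_{(t)} e^{-\varphi} - W e^{-\varphi}$ up to the definition of $\eta$. The goal of this step is to reorganize the integrand so that every surviving term is paired: the scalar part should collect into $-2(2\Delta_b\varphi - |\nabla_b\varphi|^2 + W)^2$, and the tensorial part, coming from the torsion $A_{11}$ and the Hessian/gradient corrections $-i\varphi_{11} - i\varphi_1\varphi_1$ prescribed by the choice $E_{11} = e^{\varphi}(A_{11} - i\varphi_{11} - i\varphi_1\varphi_1)$, should collect into $-2|A_{11} - i\varphi_{11} - i\varphi_1\varphi_1|^2$. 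The specific forms of $E$ and $\eta$ in the coupled flow are chosen exactly so that the cross terms from the variation of $W$ cancel against the contributions from the $\varphi$-evolution, leaving only these two perfect squares.

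The main obstacle will be controlling the integration-by-parts bookkeeping for the variation of $W$ on a pseudohermitian $3$-manifold: the Webster curvature variation involves second covariant derivatives of $E$ together with torsion terms, and one must use the commutation relations for the Tanaka-Webster connection (including the appearance of $T$-derivatives and torsion when commuting $\nabla_1$ and $\nabla_{\bar 1}$) to bring everything into divergence form. Because $M$ is closed, all genuine divergences integrate to zero, but keeping track of which terms are divergences and which combine into the squares requires care; the factor $e^{\varphi}$ in $E_{11}$ and $\eta$ is what makes the weighted integration by parts against $e^{-\varphi}d\mu$ produce clean cancellations, and verifying that no residual term survives is the delicate point.

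Finally, the nonpositivity $\frac{d}{dt}\mathcal{F} \leq 0$ is immediate once the right-hand side is written as $-2\int_M(\cdots)^2 d\mu - 2\int_M|\cdots|^2 d\mu$, and the equality case forces both integrands to vanish pointwise. To obtain the geometric reinterpretation, I would invoke the transformation law under the contact change $\widetilde{\theta} = e^{-\varphi}\theta$: the condition $A_{11} - i\varphi_{11} - i\varphi_1\varphi_1 = 0$ is precisely the statement that the torsion $\widetilde{A_{11}}$ of the rescaled structure vanishes, and $2\Delta_b\varphi - |\nabla_b\varphi|^2 + W = 0$ is the CR Yamabe-type equation asserting $\widetilde{W} = 0$. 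These identifications follow from the standard conformal-type transformation formulas for torsion and Webster curvature in dimension three, which I would cite from the background section rather than rederive.
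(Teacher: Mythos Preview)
Your proposal is correct and follows essentially the same approach as the paper: compute the variations of $W$, $|\nabla_b\varphi|^2_{J,\theta}$, and $d\mu$ under the general flow, integrate by parts against the weight $e^{-\varphi}$ to reduce $-\tfrac{1}{2}\tfrac{d}{dt}\mathcal{F}$ to $\int_M \eta_{(t)}(2\Delta_b\varphi - |\nabla_b\varphi|^2_{J,\theta} + W)e^{-\varphi}d\mu + \int_M \re[(A_{11}-i\varphi_{11}-i\varphi_1\varphi_1)E_{\overline{1}\overline{1}}]e^{-\varphi}d\mu$, and then substitute the specific choices of $E_{11}$ and $\eta_{(t)}$ to obtain the two perfect squares. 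The only minor difference in emphasis is that the paper first derives this intermediate formula for arbitrary $E,\eta$ and then specializes, whereas you describe the specialization as ``completing the square''; these are the same computation organized slightly differently.
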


\begin{remark}
Observe that for $\widetilde{\theta }=e^{-\varphi }\theta $, 
\begin{equation}
\left \{ 
\begin{array}{ccc}
\widetilde{A_{11}} & = & e^{\varphi }(A_{11}-i\varphi _{11}-i\varphi
_{1}\varphi _{1}), \\ 
\widetilde{W} & = & e^{\varphi }(2{\Delta _{b}}\varphi -\left \vert \nabla
_{b}\varphi \right \vert _{J,\theta }^{2}+W).%
\end{array}%
\right.  \label{2008}
\end{equation}%
Then the coupled torsion flow \eqref{eq:coupled_torsion_flow} on $(M,J_{(t)},\theta _{(t)})$
is equivalent to the following system of coupled torsion flows on $%
(M,J_{(t)},\widetilde{\theta }_{(t)})$ 
\begin{equation*}
\mathcal{F}(\widetilde{J}_{(t)},\widetilde{\theta }_{(t)},\varphi _{(t)})=%
\mathcal{E}(\widetilde{J}_{(t)},\widetilde{\theta }_{(t)})=\int_{M}%
\widetilde{W}d\widetilde{\mu }
\end{equation*}%
and 
\begin{equation*}
\left \{ 
\begin{array}{l}
\partial _{t}\widetilde{J}_{(t)}=2A_{\widetilde{J}_{(t)},\widetilde{\theta }%
_{(t)}}, \\ 
\partial _{t}\widetilde{\theta }_{(t)}=-2\widetilde{W}_{(t)}\widetilde{%
\theta }_{(t)}, \\ 
\partial _{t}\varphi _{(t)}=4\widetilde{W}_{(t)}.%
\end{array}%
\right.
\end{equation*}
\end{remark}

In Section~\ref{sec:Wfunctional} we define two functionals analogous to
Perelman's $\mathcal{W}$-functional, namely the $\mathcal{W}^{+}$%
-functional, 
\begin{equation*}
\begin{array}{c}
\mathcal{W}^{+}(J_{(t)},\theta _{(t)},\varphi _{(t)},\tau
_{(t)})=\int_{M}[\tau (W+\left \vert \nabla _{b}\varphi \right \vert
_{J,\theta }^{2})+\frac{1}{2}\varphi -1](4\pi \tau )^{-2}e^{-\varphi }d\mu%
\end{array}%
\end{equation*}%
and the $\mathcal{W}^{-}$-functional 
\begin{equation*}
\begin{array}{c}
\mathcal{W}^{-}(J_{(t)},\theta _{(t)},\varphi _{(t)},\tau
_{(t)})=\int_{M}[\tau (W+\left \vert \nabla _{b}\varphi \right \vert
_{J,\theta }^{2})-\frac{1}{2}\varphi +1](4\pi \tau )^{-2}e^{-\varphi }d\mu .%
\end{array}%
\end{equation*}

\begin{remark}
Note that $\mathcal{W}^{+}$ and $\mathcal{W}^{-}$ are invariant under the
rescaling $\tau \longmapsto c\tau $ and $\theta \longmapsto c\theta$.
Furthermore, we have $\mathcal{W}^{\pm }(J,\theta ,\varphi ,\tau )=\mathcal{W%
}^{\pm }(\Phi ^{\ast}J,\Phi ^{\ast }\theta ,\varphi \circ \Phi ,\tau )$ for
a contact diffeomorphism $\Phi :M\rightarrow M.$
\end{remark}

In view of Theorem \ref{t2}, we first study the monotonicity property of $%
\mathcal{W}^{+}$-functional. By the same discussion as before, the
constraint 
\begin{equation*}
\begin{array}{c}
\int_{M}(4\pi \tau )^{-2}e^{-\varphi }d\mu =1%
\end{array}%
\end{equation*}%
is equivalent to another constraint, namely
\begin{equation*}
\begin{array}{c}
\int_{M}(\varphi _{t}+2\tau ^{-1}\frac{d\tau }{dt}-4\eta _{(t)})(4\pi \tau
)^{-2}e^{-\varphi }d\mu =0%
\end{array}%
\end{equation*}%
under the flow \eqref{eq:general_torsion_flow}. Therefore we consider the
following coupled torsion flow: 
\begin{equation}
\left \{ 
\begin{array}{l}
\partial _{t}J_{(t)}=2E, \\ 
\partial _{t}\theta _{(t)}=2\eta _{(t)}\theta _{(t)}, \\ 
\partial _{t}\varphi _{(t)}=4(\eta _{(t)}-\tau ^{-1}), \\ 
\partial _{t}\tau =2,%
\end{array}%
\right.  \label{2008-3}
\end{equation}%
with $E_{11}=(A_{11}-i\varphi _{11}-i\varphi _{1}\varphi _{1})\ $and $\eta
_{(t)}=(2{\Delta _{b}}\varphi -\left \vert \nabla _{b}\varphi \right \vert
_{J,\theta }^{2}+W).$

\begin{theorem}
\label{t3} Let $(M,J,\theta )$ be a closed $3$-dimensional pseudohermitian
manifold and $J_{(t)},\theta _{(t)},\varphi _{(t)}$ and $\tau _{(t)}$ be a
solution of the coupled torsion flow (\ref{2008-3}). Then 
\begin{equation*}
\begin{array}{l}
\text{{\small $\frac{d}{dt}\mathcal{W}^{+}(J_{(t)},\theta _{(t)},\varphi
_{(t)},\tau _{(t)})$}} \\ 
=-2\tau \int_{M}(2{\Delta _{b}}\varphi -\left \vert \nabla _{b}\varphi
\right \vert _{J,\theta }^{2}+W-\tau ^{-1})^{2}(4\pi \tau )^{-2}e^{-\varphi
}d\mu \\ 
\ \ -2\tau \int_{M}|A_{11}-i\varphi _{11}-i\varphi _{1}\varphi
_{1}|^{2}(4\pi \tau )^{-2}e^{-\varphi }d\mu \\ 
\leq 0.%
\end{array}%
\end{equation*}%
The monotonicity formula is strict unless%
\begin{equation*}
A_{11}-i\varphi _{11}-i\varphi _{1}\varphi _{1}=0\ \ \ \ \ \mathrm{and\ }\ \
\ 2{\Delta _{b}}\varphi -\left \vert \nabla _{b}\varphi \right \vert
_{J,\theta }^{2}+W-\tau ^{-1}=0.
\end{equation*}%
That is, up to a contact transformation $\widetilde{\theta }=e^{-\varphi
}\theta $%
\begin{equation*}
\widetilde{A_{11}}=0\text{\ \ \textrm{and\ }\ }\widetilde{W}-\tau
^{-1}e^{\varphi }=0.
\end{equation*}
\end{theorem}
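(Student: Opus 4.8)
The plan is to mirror the computation behind Theorem~\ref{t2}, carrying the extra scale parameter $\tau$ and the correction $-\tau^{-1}$ through the argument. Write $u=(4\pi \tau)^{-2}e^{-\varphi}$ for the weight and $v=\tau (W+\left\vert \nabla_b\varphi\right\vert_{J,\theta}^2)+\tfrac12\varphi-1$ for the bracket, so that $\mathcal{W}^+=\int_M v\,u\,d\mu$. First I would verify that the flow \eqref{2008-3} is engineered precisely so that the weighted measure is preserved: since $\partial_t\,d\mu=4\eta\,d\mu$ under $\partial_t\theta=2\eta\theta$ in dimension $3$ (because $\theta\wedge d\theta$ scales by $e^{4f}$ under $\theta\mapsto e^{2f}\theta$), a direct computation using $\partial_t\varphi=4(\eta-\tau^{-1})$ and $\partial_t\tau=2$ gives $\partial_t\log u=-2\tau^{-1}\partial_t\tau-\partial_t\varphi=-4\eta$, hence $\partial_t(u\,d\mu)=0$ and in particular the constraint $\int_M u\,d\mu=1$ is maintained. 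This is the key simplification: applying the product rule to $\tfrac{d}{dt}\int_M v\,u\,d\mu$ and using $\partial_t(u\,d\mu)=0$ collapses everything to
\[
\frac{d}{dt}\mathcal{W}^+=\int_M (\partial_t v)\,u\,d\mu,
\]
so that no derivative of the weight survives.

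Next I would expand $\partial_t v=2(W+\left\vert\nabla_b\varphi\right\vert_{J,\theta}^2)+\tau\,\partial_t(W+\left\vert\nabla_b\varphi\right\vert_{J,\theta}^2)+2(\eta-\tau^{-1})$, using $\partial_t\tau=2$ and $\partial_t\varphi=4(\eta-\tau^{-1})$. The derivatives $\partial_t W$ and $\partial_t\left\vert\nabla_b\varphi\right\vert_{J,\theta}^2$ are supplied by the first variation formulas of the appendix and \eqref{18a} for the general flow \eqref{eq:general_torsion_flow}; they express these quantities in terms of $\Delta_b\eta$, the torsion, the tensor $E_{11}$, and the CR-Hessian $\varphi_{11}$ together with $\varphi_1\varphi_1$. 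The substance of the proof is then the same weighted integration by parts that underlies Theorem~\ref{t2}: integrating the $\Delta_b$- and divergence-type terms against $u\,d\mu$ introduces the drift operator $\Delta_b-\nabla_b\varphi\cdot\nabla_b$ and converts them into $\eta$- and $\left\vert\nabla_b\varphi\right\vert_{J,\theta}^2$-type terms, after which one completes the square.

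The outcome is that the Hessian-plus-torsion terms assemble into $-2\tau\int_M|A_{11}-i\varphi_{11}-i\varphi_1\varphi_1|^2\,u\,d\mu$ exactly as in Theorem~\ref{t2}, while the scalar terms assemble into $-2\tau\int_M(2\Delta_b\varphi-\left\vert\nabla_b\varphi\right\vert_{J,\theta}^2+W-\tau^{-1})^2\,u\,d\mu$. The shift by $\tau^{-1}$ is precisely the trace of the modifications $\partial_t\varphi=4(\eta-\tau^{-1})$ and $\partial_t\tau=2$ together with the explicit $\tfrac12\varphi-1$ term in $v$, and the overall prefactor $-2\tau$ comes from the $\tau\,\partial_t(\cdots)$ contribution. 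Since $\tau>0$ along the flow (by $\partial_t\tau=2$), both integrals are nonnegative, giving $\tfrac{d}{dt}\mathcal{W}^+\le 0$, with equality iff both integrands vanish; the reformulation via $\widetilde{\theta}=e^{-\varphi}\theta$ and \eqref{2008} then yields $\widetilde{A_{11}}=0$ and $\widetilde{W}-\tau^{-1}e^{\varphi}=0$.

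I expect the main obstacle to be the integration-by-parts bookkeeping in the second step: one must track the $\tau$-weighted terms and verify that the cross terms produced when completing the square reproduce the shift $W-\tau^{-1}$ exactly (rather than $W$), which requires the constant $-1$ and the coefficient $\tfrac12$ in $v$ to match the $-\tau^{-1}$ in the $\varphi$-flow and the $(4\pi\tau)^{-2}$ factor in the weight. A secondary difficulty is establishing the evolution of $\left\vert\nabla_b\varphi\right\vert_{J,\theta}^2$, since this couples the variation of the CR structure $J$ (through $E_{11}$) with that of the contact form and of $\varphi$, and is the place where the torsion term $A_{11}$ enters the squared quantity.
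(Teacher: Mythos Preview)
Your proposal is correct and follows essentially the same approach as the paper's proof. The only difference is organizational: you observe upfront that $\partial_t(u\,d\mu)=0$ and therefore reduce to $\int_M(\partial_t v)\,u\,d\mu$, whereas the paper splits $\mathcal{W}^+$ into the pieces $\int_M[\tau(W+|\nabla_b\varphi|^2)-1]\,u\,d\mu$ and $\tfrac12\int_M\varphi\,u\,d\mu$, differentiates each separately, and carries the term $\int_M[\cdots](4\eta-2\tau^{-1}\tfrac{d\tau}{dt}-\varphi_t)\,u\,d\mu$ through before noting it vanishes---which is exactly your measure-preservation observation in disguise. The same integration-by-parts identities (namely $\int_M(\Delta_b\eta)\,u\,d\mu=\int_M\eta(|\nabla_b\varphi|^2-\Delta_b\varphi)\,u\,d\mu$ and $\int_M(W+|\nabla_b\varphi|^2)\,u\,d\mu=\int_M(2\Delta_b\varphi-|\nabla_b\varphi|^2+W)\,u\,d\mu$) and the same completion of the square are used in both.
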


Next we study the monotonicity property of $\mathcal{W}^{-}$-functional 
\begin{equation*}
\begin{array}{c}
\mathcal{W}^{-}(J_{(t)},\theta _{(t)},\varphi _{(t)},\tau
_{(t)})=\int_{M}[\tau (W+\left \vert \nabla _{b}\varphi \right \vert
_{J,\theta }^{2})-\frac{1}{2}\varphi +1](4\pi \tau )^{-2}e^{-\varphi }d\mu .%
\end{array}%
\end{equation*}%
By the same discussion as before, the constraint 
\begin{equation*}
\begin{array}{c}
\int_{M}(4\pi \tau )^{-2}e^{-\varphi }d\mu =1%
\end{array}%
\end{equation*}%
is equivalent to
\begin{equation*}
\begin{array}{c}
\int_{M}(\varphi _{t}+2\tau ^{-1}\frac{d\tau }{dt}-4\eta _{(t)})(4\pi \tau
)^{-2}e^{-\varphi }d\mu =0%
\end{array}%
\end{equation*}%
under the flow \eqref{eq:general_torsion_flow}. Therefore we consider the
following coupled torsion flow : 
\begin{equation}
\left \{ 
\begin{array}{l}
\partial _{t}J_{(t)}=2E, \\ 
\partial _{t}\theta _{(t)}=2\eta _{(t)}\theta _{(t)}, \\ 
\partial _{t}\varphi _{(t)}=4(\eta _{(t)}+\tau ^{-1}), \\ 
\partial _{t}\tau =-2,%
\end{array}%
\right.  \label{2008-2}
\end{equation}%
with $E_{11}=(A_{11}-i\varphi _{11}-i\varphi _{1}\varphi _{1})\ $and $\eta
_{(t)}=(2{\Delta _{b}}\varphi -\left \vert \nabla _{b}\varphi \right \vert
_{J,\theta }^{2}+W).$

\begin{theorem}
\label{t4} Let $(M,J,\theta )$ be a closed $3$-dimensional pseudohermitian
manifold and $J_{(t)},\theta _{(t)},\varphi _{(t)}$ and $\tau _{(t)}$ be a
solution of the coupled torsion flow (\ref{2008-2}). Then 
\begin{equation*}
\begin{array}{l}
\text{{\small $\frac{d}{dt}\mathcal{W}^{-}(J_{(t)},\theta _{(t)},\varphi
_{(t)},\tau _{(t)})$}} \\ 
=-2\tau \int_{M}(2{\Delta _{b}}\varphi -\left \vert \nabla _{b}\varphi
\right \vert _{J,\theta }^{2}+W+\tau ^{-1})^{2}(4\pi \tau )^{-2}e^{-\varphi
}d\mu \\ 
\ \ \ -2\tau \int_{M}|A_{11}-i\varphi _{11}-i\varphi _{1}\varphi
_{1}|^{2}(4\pi \tau )^{-2}e^{-\varphi }d\mu \\ 
\leq 0.%
\end{array}%
\end{equation*}%
The monotonicity formula is strict unless%
\begin{equation*}
A_{11}-i\varphi _{11}-i\varphi _{1}\varphi _{1}=0\ \ \ \ \ \mathrm{and\ }\ \
\ 2{\Delta _{b}}\varphi -\left \vert \nabla _{b}\varphi \right \vert
_{J,\theta }^{2}+W+\tau ^{-1}=0.
\end{equation*}%
That is, up to a contact transformation $\widetilde{\theta }=e^{-\varphi
}\theta $%
\begin{equation*}
\widetilde{A_{11}}=0\text{\ \ \textrm{and\ }\ }\widetilde{W}+\tau
^{-1}e^{\varphi }=0.
\end{equation*}
\end{theorem}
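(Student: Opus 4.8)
The plan is to differentiate $\mathcal{W}^{-}$ directly along \eqref{2008-2}, in complete parallel with the argument behind Theorem~\ref{t3}, and to reorganise the result into two manifest squares. The inputs are the first-variation formulas under the general flow \eqref{eq:general_torsion_flow}: in dimension three one has $\partial_{t}(d\mu)=4\eta\,d\mu$, the variation of the Tanaka--Webster curvature $\partial_{t}W=-2\eta W-4\Delta_{b}\eta+(\text{terms in }E_{11}\text{ and }A_{11})$, whose conformal part comes from $\partial_{t}\theta=2\eta\theta$ and whose remaining part comes from $\partial_{t}J=2E$ (see the appendix), and the variation of $\left|\nabla_{b}\varphi\right|_{J,\theta}^{2}$ induced by the changes of $\varphi$ and of the Levi metric. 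Throughout, write $d\nu:=(4\pi\tau)^{-2}e^{-\varphi}d\mu$.

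First I would record that $d\nu$ is preserved as a measure along \eqref{2008-2}: since $\partial_{t}(d\nu)=(-2\tau^{-1}\partial_{t}\tau-\partial_{t}\varphi+4\eta)\,d\nu$, inserting $\partial_{t}\tau=-2$ and $\partial_{t}\varphi=4(\eta+\tau^{-1})$ makes the bracket vanish identically. This is exactly the constraint that was used to single out the $\varphi$- and $\tau$-equations, here realised pointwise. Consequently $\frac{d}{dt}\mathcal{W}^{-}=\int_{M}\frac{d}{dt}\big[\tau(W+\left|\nabla_{b}\varphi\right|^{2})-\tfrac{1}{2}\varphi+1\big]\,d\nu$, so the measure never has to be differentiated again.

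Next I would substitute the flow equations into this integrand, insert the variation formulas above, and integrate by parts against $d\nu$; this converts $\Delta_{b}$ into the drift sub-Laplacian relative to $e^{-\varphi}d\mu$ and releases the torsion terms through the CR commutation identities on a pseudohermitian $3$-manifold. The contribution of $\partial_{t}\tau=-2$ together with the $\tau^{-1}$ in $\partial_{t}\varphi$ is precisely what shifts $\eta$ to $\eta+\tau^{-1}$, so that after cancellation the scalar terms close up into $-2\tau\int_{M}(2\Delta_{b}\varphi-\left|\nabla_{b}\varphi\right|^{2}+W+\tau^{-1})^{2}\,d\nu$, while the $J$-variation terms combine with $i\varphi_{11}$ and $i\varphi_{1}\varphi_{1}$ into $-2\tau\int_{M}|A_{11}-i\varphi_{11}-i\varphi_{1}\varphi_{1}|^{2}\,d\nu$. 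As $\tau>0$ both integrals are nonpositive; equality forces each integrand to vanish, and rewriting $E_{11}=0$ and $\eta+\tau^{-1}=0$ through \eqref{2008} gives $\widetilde{A_{11}}=0$ and $\widetilde{W}+\tau^{-1}e^{\varphi}=0$ for $\widetilde{\theta}=e^{-\varphi}\theta$, as claimed.

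The step I expect to be the main obstacle is the assembly of the second square: the $J$-variation of $W$ produces second-order and torsion terms that must, after integration by parts, recombine with the Hessian term $\varphi_{11}$ and the quadratic term $\varphi_{1}\varphi_{1}$ to form exactly $|A_{11}-i\varphi_{11}-i\varphi_{1}\varphi_{1}|^{2}$ with the correct coefficient, and this depends on a careful use of the commutation relations for $\nabla_{b}$ together with the precise $J$-variation formula of the appendix. As an independent check on the signs I would note a time-reversal reduction to Theorem~\ref{t3}: if $(J_{(t)},\theta_{(t)},\varphi_{(t)},\tau_{(t)})$ solves \eqref{2008-2}, then $(J_{(t)},\theta_{(t)},\varphi_{(t)},-\tau_{(t)})$ solves \eqref{2008-3} and $\mathcal{W}^{+}(J,\theta,\varphi,-\tau)=-\mathcal{W}^{-}(J,\theta,\varphi,\tau)$; feeding this into the identity of Theorem~\ref{t3} (which is valid for $\tau\neq0$) reproduces the displayed formula with the correct sign.
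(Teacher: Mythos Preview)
Your main approach is correct and is essentially the paper's own: it too proceeds by direct differentiation following the computations of Theorem~\ref{t3}, splitting $\mathcal{W}^{-}$ as $\int_{M}[\tau(W+|\nabla_{b}\varphi|^{2})+1]\,d\nu-\tfrac{1}{2}\int_{M}\varphi\,d\nu$ and treating each piece with the variation formulas and integration by parts. Your observation that $d\nu$ is pointwise preserved along \eqref{2008-2} is a modest organisational improvement; the paper carries the factor $(4\eta-2\tau^{-1}\partial_{t}\tau-\varphi_{t})$ arising from differentiating the measure and lets it vanish implicitly rather than eliminating it at the outset.

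Your time-reversal check, by contrast, is a genuinely different and more elementary route that the paper does not take. The paper simply repeats the full computation of Theorem~\ref{t3} with the signs adjusted, whereas your substitution $\sigma=-\tau$ together with $\mathcal{W}^{+}(J,\theta,\varphi,-\tau)=-\mathcal{W}^{-}(J,\theta,\varphi,\tau)$ derives the identity of Theorem~\ref{t4} directly from the one already established. This works because the displayed equality in Theorem~\ref{t3} is a pure identity valid for all $\tau\neq0$ (only the final inequality uses $\tau>0$); it buys you the formula essentially for free once Theorem~\ref{t3} is in hand, while the paper's route has only the modest advantage of being self-contained.
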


\begin{remark}
Note that for $\widetilde{\theta }_{(t)}=e^{-\varphi }\theta _{(t)},$ we may
reparametrize the time $t$ by the formula $\widetilde{t}=\int_{0}^{t}e^{-\varphi
_{(s)}(x(s))}ds$.
We have $\frac{d\widetilde{t}}{dt}=e^{-\varphi
_{(t)}(x(t))}$, so the coupled torsion flows \eqref{2008-3} and \eqref
{2008-2} on $(M,J_{(t)},\theta _{(t)})$ are equivalent to the following
coupled torsion flows on $(M,\widetilde{J}_{(\widetilde{t})},\widetilde{%
\theta }_{(\widetilde{t})}),$ respectively : 
\begin{equation*}
\left \{ 
\begin{array}{l}
\partial _{\widetilde{t}}\widetilde{J}_{(\widetilde{t})}=2A_{\widetilde{J}_{(%
\widetilde{t})},\widetilde{\theta }_{(\widetilde{t})}}, \\ 
\partial _{\widetilde{t}}\widetilde{\theta }_{(\widetilde{t})}=-2\widetilde{W%
}\widetilde{\theta }_{(\widetilde{t})}, \\ 
\partial _{\widetilde{t}}\varphi _{(\widetilde{t})}=4(\widetilde{W}-\tau
^{-1}e^{\varphi }), \\ 
\partial _{t}\tau =2,%
\end{array}%
\right.
\end{equation*}%
and%
\begin{equation*}
\left \{ 
\begin{array}{l}
\partial _{\widetilde{t}}\widetilde{J}_{(\widetilde{t})}=2A_{\widetilde{J}_{(%
\widetilde{t})},\widetilde{\theta }_{(\widetilde{t})}}, \\ 
\partial _{\widetilde{t}}\widetilde{\theta }_{(\widetilde{t})}=-2\widetilde{W%
}\widetilde{\theta }_{(\widetilde{t})}, \\ 
\partial _{\widetilde{t}}\varphi _{(\widetilde{t})}=4(\widetilde{W}+\tau
^{-1}e^{\varphi }), \\ 
\partial _{t}\tau =-2.%
\end{array}%
\right.
\end{equation*}
\end{remark}

Recall that $X_{f}$ is called a contact vector field if the Lie derivative $\mathcal{L}_{X_{f}}\theta
=\eta \theta $ for some function $\eta$. 
Such a contact vector field has the form $X_{f}=if_{1}Z_{%
\overline{1}}-if_{\overline{1}}Z_{1}-fT$\ \ for some smooth function $f$ on $%
M$. 
Furthermore 
\begin{equation}
\mathcal{L}_{X_{f}}J=2B_{J}^{\prime }f\ :=(f_{11}+iA_{11}f)\theta
^{1}\otimes Z_{\overline{1}}+(f_{\overline{1}\overline{1}}-iA_{\overline{1}%
\overline{1}}f)\theta ^{\overline{1}}\otimes Z_{1}  \label{000b}
\end{equation}%
so \eqref{eq:coupled_torsion_flow} is equivalent to 
\begin{equation}
\left \{ 
\begin{array}{l}
\partial _{t}J_{(t)}=2JB_{J}^{\prime }f=J\mathcal{L}_{X_{f}}J, \\ 
\partial _{t}\theta _{(t)}=2\eta _{(t)}\theta _{(t)}, \\ 
\partial _{t}\varphi _{(t)}=4\eta _{(t)}.%
\end{array}%
\right.  \label{000a}
\end{equation}%
with $f=e^{\varphi }$.
Similar results hold for \eqref{2008-3} and \eqref{2008-2}. 

\section{Preliminaries and definitions}

\label{sec:definitions} 
In this section we introduce some basic notions from
pseudohermitian geometry. We learned many of these notions from \cite{l1,l2}, and we refer to these papers for proofs and more references.

\begin{definition}
\label{def:CR}
Let $M$ be a smooth manifold and $\xi\subset TM$ a subbundle.
A {\bf CR structure} on $\xi$ consists of an endomorphism $J:\xi \to \xi$ with $J^2=-\id$ such that the following integrability condition holds.
\begin{enumerate}
\item if $X,Y\in \xi$, then so is $[JX,Y]+[X,JY]$.
\item $J([JX,Y]+[X,JY])=[JX,JY]-[X,Y]$.
\end{enumerate}
\end{definition}
The CR structure $J$ can be extended to $\xi \otimes \C$,
which we can then decompose into the direct sum of eigenspaces of $J$. The
eigenvalues of $J$ are $i$ and $-i$, and the corresponding eigenspaces will
be denoted by $T^{1,0}$ and $T^{0,1}$, respectively.
The integrability condition can then be reformulated as 
$$
X,Y\in T^{1,0} \text{ implies } [X,Y]\in T^{1,0}.
$$

Now consider a closed $2n+1$-manifold $M$ with a cooriented contact structure $\xi=\ker \theta $. This means that $\theta \wedge d\theta^n \neq 0$.
The \textbf{Reeb vector field} of $\theta$ is the vector field $T$
uniquely determined by the equations 
\begin{equation}
{\theta }(T)=1,\quad \text{and}\quad d{\theta }(T,{\cdot }%
)=0.  
\label{eq:defining_Reeb}
\end{equation}%
\begin{definition}
A \textbf{pseudohermitian manifold} is a triple $(M^{2n+1},\theta,J)$ where
\begin{itemize}
\item $\theta$ is a contact form on $M$.
\item $J$ is a CR structure on $\ker \theta$.
\end{itemize}
\end{definition}

\begin{definition}
The \textbf{Levi form} $\left \langle \ ,\ \right \rangle $ is the Hermitian
form on $T^{1,0}$ defined by 
\begin{equation*}
H(Z,W)=\left \langle Z,W\right \rangle =-i\left \langle d\theta ,Z\wedge 
\overline{W}\right \rangle .
\end{equation*}
\end{definition}
We can extend this Hermitian form $\left \langle \ ,\ \right \rangle $ to $T^{0,1}$ by defining $%
\left \langle \overline{Z},\overline{W}\right \rangle =\overline{%
\left
\langle Z,W\right \rangle }$ for all $Z,W\in T^{1,0}$. 
Furthermore, the Levi form
naturally induces a Hermitian form on the dual bundle of $T^{1,0}$, and
hence on all induced tensor bundles.

We now restrict ourselves to strictly pseudoconvex manifolds, or in other words to compatible complex structures $J$. 
This means that the Levi form induces a Hermitian metric $\langle \cdot ,\cdot \rangle _{J,{\theta }}$ by 
\begin{equation*}
\langle V,U\rangle _{J,{\theta }}=d{\theta }(V,JU).
\end{equation*}
The associated norm is defined as usual: $%
|V|_{J,\theta }^{2}=\langle V,V\rangle _{J,{\theta }}$. 
It follows that $H$ also gives rise to a Hermitian metric for $T^{1,0}$%
, and hence we obtain Hermitian metrics on all induced tensor bundles.
By integrating this Hermitian metric over $M$ with respect to the volume
form $d\mu =\theta \wedge d\theta^n $, we get an $L^2$-inner product on the
space of sections of each tensor bundle.
\begin{definition}
Let $(M,J,\xi =\ker \theta _{0})$ be a CR $3$-manifold
that is the smooth boundary of a bounded, strictly
pseudoconvex domain in a complete Stein manifold $V^{4}$. 
We shall call such a CR $3$-manifold {\bf Stein fillable}.
\end{definition}

\subsection{Pseudohermitian connection}
\label{sec:pseudoherm_connection}
Consider a local frame $\left\{ T,Z_{\alpha},Z_{\bar{\beta}}\right\} $ for $TM\otimes \mathbb{C}$, where $\{ Z_{\alpha} \}$ is a local
frame for $T^{1,0}$, and $Z_{\bar{\beta}}=\overline{Z_{\beta}}$ a local frame for $T^{0,1}$. 
Then $%
\left\{ \theta ,\theta ^{\alpha},\theta ^{\bar{\beta}}\right\} $, the coframe dual to 
$\left\{ T,Z_{\alpha},Z_{\bar{\beta}}\right\} $, satisfies 
\begin{equation*}
d\theta =ih_{\alpha\bar{\beta}}\theta ^{\alpha}\wedge \theta ^{\bar{\beta}},
\end{equation*}
where $h_{\alpha \bar{\beta}}$ is a positive definite matrix. 
By the Gram-Schmidt process we can always choose $Z_{\alpha}$ such that $h_{\alpha \bar \beta}=\delta_{\alpha\bar \beta}$; throughout this paper, we shall take such a local frame.

The {\bf pseudohermitian connection} or {\bf Tanaka-Webster connection} of $(J,\theta )$ is the connection $\nabla $
on $TM\otimes \mathbb{C}$ (and extended to tensors) given in terms of a
local frame $\{Z_{\alpha} \}$ for $T^{1,0}$ by

\begin{equation*}
\nabla Z_{\alpha}=\omega _{\alpha}{}^{\beta}\otimes Z_{\beta},
\quad \nabla Z_{\bar{ \alpha}}=\omega
_{\bar{\alpha}}{}^{\bar{\beta}}\otimes Z_{\bar{\beta}},\quad \nabla T=0,
\end{equation*}%
where $\omega _{\alpha}{}^{\beta}$ is the $1$-form uniquely determined by the
following equations:

\begin{equation}  
\label{eq:defining_Tanaka_Webster}
\begin{split}
d\theta ^{\beta}& =\theta ^{\alpha}\wedge \omega _{\alpha}{}^{\beta}+\theta \wedge \tau ^{\beta} \\
\tau_\alpha \w \theta^{\alpha} & =0 \\
\omega _{\alpha}{}^{\beta}+\omega _{\bar{\beta}}{}^{\bar{\alpha}} &=0.
\end{split}%
\end{equation}%
Here $\tau ^{\alpha}$ is called the \textbf{pseudohermitian torsion}, which we can also write as
$$
\tau_\alpha=A_{\alpha\beta}\theta ^{\beta}.
$$
The components $A_{\alpha \beta}$ satisfy
$$
A_{\alpha \beta}=A_{\beta \alpha}.
$$
We often consider the \textbf{%
torsion tensor} given by 
\begin{equation*}
A_{J,\theta}=A^{\alpha}{}_{\bar{\beta}}Z_{\alpha} \otimes \theta^{\bar \beta} +A^{\bar
\alpha}{}_{\beta}Z_{\bar \alpha} \otimes \theta^{\beta}.
\end{equation*}

The following remark gives some geometric meaning to the
pseudohermitian torsion.

\begin{remark}
Let $X_{f}$ be the contact vector field for a real-valued function $f\in
C^{2}(M)$. Then we have 
\begin{equation*}
\mathcal L_{X_{f}}\theta =-(Tf)\theta,
\end{equation*}%
and
\begin{equation*}
L_{X_{f}}J =2B_{J}^{\prime }f:=
2(f^{\bar \beta}{}_{\alpha}+iA_{\alpha}{}^{\bar \beta})\theta^\alpha \otimes Z_{\bar \beta}+
2(f^{\beta}{}_{\bar \alpha}-iA_{\bar \alpha}{}^{\beta})\theta^{\bar \alpha} \otimes Z_{\beta}
\end{equation*}%
See for instance \cite[Lemma 3.4]{cl1}. 
In particular, we have $X_f=T$ for $f=1$, and the above equation reduces to 
\begin{equation*}
\mathcal{L}_T J=2JA_{J,\theta},
\end{equation*}
so we see that the torsion tensor measures to what extend the complex
structure $J$ is invariant under the Reeb flow.
\end{remark}

We now consider the curvature of the Tanaka-Webster connection in terms of the coframe $\{\theta=\theta^0,\theta^\alpha,\theta^{\bar \beta} \}$.
The second structure equation gives
\[
\begin{split}
\Omega_{\beta }{}^{\alpha }& =\overline{\Omega _{\bar{\beta}}{}^{\bar{\alpha}}}%
=d\omega _{\beta }{}^{\alpha }-\omega _{\beta }{}^{\gamma }\wedge \omega
_{\gamma }{}^{\alpha }, \\
\Omega _{0}{}^{\alpha }& =\Omega _{\alpha }{}^{0}=\Omega _{0}{}^{\bar{\beta}}=\Omega _{%
\bar{\beta}}{}^{0}=\Omega _{0}{}^{0}=0.
\end{split}
\]

In \cite[Formulas 1.33 and 1.35]{we}, Webster showed that the curvature $\Omega _{\beta }{}^{\alpha }$ can be written as
\begin{equation}
\begin{array}{c}
\Omega _{\beta }{}^{\alpha }=R_{\beta }{}^{\alpha }{}_{\rho \bar{\sigma}}\theta
^{\rho }\wedge \theta ^{\bar{\sigma}}+W_{\beta }{}^{\alpha }{}_{\rho }\theta
^{\rho }\wedge \theta -W^{\alpha }{}_{\beta \bar{\rho}}\theta ^{\bar{\rho}%
}\wedge \theta +i\theta _{\beta }\wedge \tau ^{\alpha }-i\tau _{\beta
}\wedge \theta ^{\alpha },%
\end{array}
\label{eq:decomposing_curvature}
\end{equation}%
where the coefficients satisfy 
\begin{equation*}
\begin{array}{c}
R_{\beta \bar{\alpha}\rho \bar{\sigma}}=\overline{R_{\alpha \bar{\beta}%
\sigma \bar{\rho}}}=R_{\bar{\alpha}\beta \bar{\sigma}\rho }=R_{\rho \bar{%
\alpha}\beta \bar{\sigma}},\ \ W_{\beta \bar{\alpha}\gamma }=W_{\gamma \bar{%
\alpha}\beta }.%
\end{array}%
\end{equation*}%
In addition, by \cite[(2.4)]{l2} the coefficients $W_\alpha{}^\beta{}_\rho$ are determined by the torsion,
$$
W_\alpha{}^\beta{}_\rho=A_{\alpha \rho,}{}^\beta.
$$
Contraction of \eqref{eq:decomposing_curvature} yields%
\begin{equation}
\begin{split}
\Omega _{\alpha }{}^{\alpha }=d\omega _{\alpha }{}^{\alpha }
&=R_{\rho \bar{\sigma%
}}\theta ^{\rho }\wedge \theta ^{\bar{\sigma}}+W_{\alpha }{}^{\alpha
}{}_{\rho }\theta ^{\rho }\wedge \theta -W_{\overline{\alpha }}{}^{\overline{%
\alpha }}{}_{\bar{\rho}}\theta ^{\bar{\rho}}\wedge \theta \\
&=R_{\rho \bar{\sigma%
}}\theta ^{\rho }\wedge \theta ^{\bar{\sigma}}
+A_{\alpha \rho}{}^\alpha \theta^\rho \w \theta
-A_{\bar \alpha \bar \rho}{}^{\bar \alpha} \theta^{\bar \rho} \w \theta .
\end{split}
\label{eq:curvature_Tanaka_connection}
\end{equation}

\begin{definition}
The {\bf pseudohermitian Ricci tensor} is the tensor with components $R_{\rho \bar \sigma}$.
Its trace $W:=R_{\rho}{}^\rho$ is called the {\bf Webster curvature}.
If the pseudohermitian Ricci tensor is a scalar multiple of the Levi form, then we say that the pseudohermitian structure is {\bf pseudo-Einstein}.
\end{definition}

\begin{remark}
From the definition it is clear that the Webster curvature is the analogue of the scalar curvature in Riemannian geometry, and we also see that the pseudo-Einstein condition mimics the Einstein condition.
Unlike the Riemannian case, pseudo-Einstein structures do not necessarily have constant Webster curvature, even in higher dimensions.
\end{remark}

We will denote components of covariant derivatives by indices preceded by a
comma.
For instance, we write $A_{\alpha\beta,\gamma}$.
Here the indices $\{0,\alpha,\bar{\beta}\}$ indicate derivatives with respect to $\{T,Z_{\alpha},Z_{%
\bar{\beta}}\}$. 
For derivatives of a scalar function, we will often omit the
comma.
For example, $\varphi _{\alpha}=Z_{\alpha}\varphi ,\ \varphi _{\alpha\bar{\beta}}=Z_{%
\bar{\beta}}Z_{\alpha}\varphi -\omega _{\alpha}{}^{\gamma}(Z_{\bar{\beta}})Z_{\gamma}\varphi ,\ \varphi
_{0}=T \varphi $ for a (smooth) function $\varphi$.

\subsection{Discussion on short time existence}
\label{sec:short_time}
We have no short time existence result for general initial conditions, but we shall discuss existence and properties of the torsion flow for homogeneous contact manifolds in Sections~\ref{section:Tanaka_connection_global_frame} and \ref{section:homogeneous_contact_manifolds}.
Here we thank Jih-Hsin Cheng for valuable contributions, in particular involving the variation formulas and linearized operator. See also his paper for a related flow, \cite{c}.

In general, it is possible to show the following.

\begin{theorem}
Suppose for initial condition $(J_0,\theta_0)$ the torsion flow preserves the condition
\begin{equation}
\label{eq:pseudo_Einstein_condition}
W_{1}-iA^{\bar 1}_{\phantom{1}1,\overline{1}}=0.
\end{equation}
Then there are $\epsilon>0$, and smooth family of smooth tensors $(J_t,\theta_t)$ on $t\in[0,\epsilon[$ such that $(J_t,\theta_t)$ is a solution to the torsion flow.
In addition, such a solution is unique, and the curvature evolution satisfies
\[
\begin{split}
\dot A_{\bar \alpha \bar \gamma} &=2iW_{\bar \alpha \bar \gamma}+2WA_{\bar \alpha \bar \gamma}-iA_{\bar \alpha \bar \gamma,0} \\
\dot W &=4 \Delta _{b}W+2( W^2- \Vert A \Vert^2 )+2 \Re(iA_{\gamma \alpha}{}^{,\gamma \alpha}.
\end{split}
\]
\end{theorem}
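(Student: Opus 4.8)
The plan is to reformulate \eqref{eq:torsion_flow} as a quasilinear evolution system, isolate its degenerate directions, and run a parabolic (subelliptic) existence argument on the slice cut out by \eqref{eq:pseudo_Einstein_condition}. First I reduce the unknowns. Since $\partial_t\theta_{(t)}=-2W\theta_{(t)}$ only rescales the contact form, I write $\theta_{(t)}=e^{2f_{(t)}}\theta_0$ with $\partial_t f=-W$, and I encode $J_{(t)}$ by a single complex deformation function $\zeta_{(t)}$ recording the tilt of $T^{1,0}_{(t)}$ inside the fixed bundle $\xi\otimes\C$. In these variables the $f$-equation $\partial_t f=-W$ is genuinely second order, because the Webster curvature of $e^{2f}\theta_0$ contains $\Delta_b f$; by contrast $\partial_t J=2A$ is only first order in $J$, since the torsion $A_{11}$ is extracted algebraically from $d\theta^1$ through the structure equations \eqref{eq:defining_Tanaka_Webster}, and its principal part is the Reeb derivative of $J$, in accordance with $\mathcal L_T J=2JA_{J,\theta}$. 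Thus the raw flow is of mixed parabolic--transport type and is not parabolic in the naive sense; moreover it is invariant under contact diffeomorphisms, so even the parabolic block has a nontrivial kernel, exactly as the symbol of $-2\,\mathrm{Ric}$ does for Ricci flow.

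Two devices handle these degeneracies. To break the contact-diffeomorphism invariance I add a DeTurck-type gauge term generated by a contact vector field: using \eqref{000b},
\[
\mathcal L_{X_g}J=2B_J'g=(g_{11}+iA_{11}g)\theta^1\otimes Z_{\bar1}+(g_{\bar1\bar1}-iA_{\bar1\bar1}g)\theta^{\bar1}\otimes Z_1,
\]
and I replace \eqref{eq:torsion_flow} by $\partial_t J=2A+J\mathcal L_{X_g}J$ with $g$ built from the evolving geometry, as in the reformulation \eqref{000a}. To treat the remaining transport direction I work in the Heisenberg parabolic scaling, in which a single Reeb derivative carries the same weight as $\Delta_b$; the term $-iA_{\bar\alpha\bar\gamma,0}$ appearing below in $\dot A$ then sits at parabolic weight two rather than at subleading order. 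The crux is then a symbol computation: I linearize the gauge-fixed system, and I verify that on the locus where \eqref{eq:pseudo_Einstein_condition} holds the principal part is a (subelliptic) parabolic operator with strictly positive symbol. The hypothesis that the flow preserves \eqref{eq:pseudo_Einstein_condition} is precisely what makes the obstructing cross terms between the $\zeta$- and $f$-blocks vanish, so that we may remain inside this parabolic slice.

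With strict subelliptic parabolicity of the gauge-fixed system in hand, short-time existence and uniqueness follow from standard theory for quasilinear (sub)parabolic systems --- a contraction-mapping or inverse-function argument in the appropriate anisotropic H\"older spaces, or a Nash--Moser scheme should a loss of derivatives survive the coupling. This produces a family $(J_{(t)},\theta_{(t)})$ on some $[0,\epsilon[$ solving the modified flow; I then undo the gauge by integrating the time-dependent contact vector field $X_{g_{(t)}}$ to contact diffeomorphisms $\Phi_{(t)}$ and pulling back, recovering a genuine solution of \eqref{eq:torsion_flow}. Consistency of the gauge removal over the whole interval is exactly where preservation of \eqref{eq:pseudo_Einstein_condition} is used, and uniqueness descends from uniqueness of the parabolic Cauchy problem together with uniqueness of the generating ODE.

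It remains to derive the curvature evolution. Differentiating the definitions of $A_{\bar\alpha\bar\gamma}$ and $W$ along the flow and invoking the variation formulas recorded in the appendix, together with the commutation identities for the Tanaka--Webster connection and the relation $W_\alpha{}^\beta{}_\rho=A_{\alpha\rho,}{}^\beta$, one collects terms to reach
\[
\dot A_{\bar\alpha\bar\gamma}=2iW_{\bar\alpha\bar\gamma}+2WA_{\bar\alpha\bar\gamma}-iA_{\bar\alpha\bar\gamma,0},\qquad \dot W=4\Delta_b W+2(W^2-\Vert A\Vert^2)+2\re\!\left(iA_{\gamma\alpha}{}^{,\gamma\alpha}\right).
\]
The term $4\Delta_b W$ reconfirms that $W$, equivalently the conformal factor $f$, obeys a bona fide parabolic equation, while $-iA_{\bar\alpha\bar\gamma,0}$ is the Reeb transport carrying the one non-parabolic direction. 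I expect the main obstacle to be the symbol computation of the second paragraph: showing that the DeTurck gauge together with the constraint \eqref{eq:pseudo_Einstein_condition} is exactly what renders the linearized operator strictly parabolic in the subelliptic sense --- i.e.\ that \eqref{eq:pseudo_Einstein_condition} is the precise condition eliminating the obstructing first-order and cross terms --- rather than the subsequent appeal to parabolic theory or the lengthy but routine tensor calculus behind the evolution equations.
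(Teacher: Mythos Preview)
Your proposal takes a genuinely different route from the paper's sketch. The paper does \emph{not} introduce a DeTurck-type gauge. Its argument is: linearize the torsion flow directly; then, assuming condition~\eqref{eq:pseudo_Einstein_condition} holds along the flow, invoke the commutation relations of the Tanaka--Webster calculus (specifically \cite[Equation~2.15]{l2}) to show that the linearized system is of heat type; finally apply Hamilton's Nash--Moser argument \cite[Sections~4--6]{h1}, which was built precisely to handle such systems without gauge fixing. The curvature evolution equations are obtained just as you describe, from the variation formulas in the appendix (equations~\eqref{26} and~\eqref{eq:variationW} specialized to $E=A$, $\eta=-W$).

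Your DeTurck strategy is a plausible modern alternative, but it changes the mechanism through which \eqref{eq:pseudo_Einstein_condition} enters. In the paper's scheme, the condition is what allows the commutation identities to upgrade the first-order equation $\partial_t J=2A$ to a genuine heat-type equation at the linear level; you instead posit that the condition kills cross terms in the symbol of a gauge-fixed system and is needed for ``consistency of gauge removal.'' That reinterpretation is unverified --- as you candidly acknowledge, the symbol computation is the crux and you have not done it --- and it is not the role the paper assigns to the condition. The paper's approach buys simplicity: no auxiliary contact vector field, no gauge to integrate and undo. Your approach, if the symbol computation went through, would buy access to standard strictly parabolic existence theory in place of Nash--Moser, though you concede Nash--Moser may still be needed.
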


\begin{remark}
If Condition~\eqref{eq:pseudo_Einstein_condition} holds on the entire interval $[0,\epsilon[$, the torsion flow is of heat type on the interval  $[0,\epsilon[$. However this  Condition~\eqref{eq:pseudo_Einstein_condition} is not preserved in general, although it is for the special class of homogeneous contact manifolds. 
\end{remark}

\begin{proof}[Sketch of the proof]
This assertion can be proved by carrying out the following steps.
Linearize the torsion flow. If the Condition~\eqref{eq:pseudo_Einstein_condition} holds on a time interval $[0,\epsilon]$, commutation relations, \cite[Equation 2.15]{l2}, can be used to show that the torsion flow is of heat type.
The arguments of Hamilton, \cite[Section 4,5,6]{h1}, can then be used to show short-time existence.
The main ingredient is the Nash-Moser inverse function theorem.

The curvature evolution equations can be found by working out the variation formulas for the the torsion and Webster curvature.
This is done in the appendix.
\end{proof}
We omit the details since Condition~\eqref{eq:pseudo_Einstein_condition} is in practice not preserved and even in the special cases it is, checking this involves solving the torsion flow.

\section{Contact $3$-manifolds with a global frame and pseudohermitian
structures}

\label{section:Tanaka_connection_global_frame}
We now specialize to $3$-dimensional pseudohermitian manifolds.
Let $(M,\xi=\ker \theta)$ be
a cooriented contact $3$-manifold. 
Denote the Reeb field by $T$. 
Furthermore, in this section and in the next, Section~\ref{section:homogeneous_contact_manifolds}, we shall assume that $\xi$ admits a global symplectic trivialization, i.e.~there
are vector fields $U,V$ such that $\xi =\Span(U,V)$ and $d\theta(U,V)=1$.

\begin{lemma}
\label{lemma:global_frame_chern_class} Let $(M,\xi=\ker \theta)$ be a
contact $3$-manifold. Then there is a global trivialization $U,V$ of its
contact structure if and only if $c_1(\xi)=0$.
\end{lemma}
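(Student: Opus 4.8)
The plan is to reduce the statement to the standard classification of complex line bundles. The symplectic form $d\theta|_\xi$ orients the rank-$2$ real bundle $\xi$, and a choice of compatible complex structure $J$ (such a $J$ exists globally, since fiberwise the space of $d\theta$-compatible complex structures is nonempty and contractible) turns $\xi$ into a complex line bundle $L=(\xi,J)$ whose first Chern class is, by definition, $c_1(\xi)$. The Levi form endows $L$ with a Hermitian metric, namely $\langle \cdot ,\cdot \rangle _{J,\theta}=d\theta(\cdot ,J\cdot )$. Throughout I will use two standard facts: a complex line bundle is trivial if and only if it admits a nowhere-vanishing section, and complex line bundles over a paracompact space are classified up to isomorphism by their first Chern class in $H^2(M;\Z)$ (via $BU(1)=\C P^{\infty}=K(\Z,2)$).

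For the forward direction, suppose $U,V$ is a symplectic trivialization, i.e.\ $\xi=\Span(U,V)$ with $d\theta(U,V)=1$. Then $U,V$ is in particular a global real frame of the oriented bundle $\xi$, so $L=(\xi,J)$ is a trivial complex line bundle, and hence $c_1(\xi)=c_1(L)=0$. For the converse, suppose $c_1(\xi)=0$. Then $L=(\xi,J)$ is trivial, so it admits a nowhere-vanishing global section; after normalizing with respect to the Levi metric we obtain a section $U$ of $\xi$ with $\langle U,U\rangle _{J,\theta}=d\theta(U,JU)=1$ everywhere. Setting $V:=JU$, the relation $J^2=-\id$ shows that $\{U,V\}$ spans $\xi$ fiberwise, and
\[
d\theta(U,V)=d\theta(U,JU)=\langle U,U\rangle _{J,\theta}=1,
\]
so $U,V$ is the desired symplectic trivialization.

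The only genuinely nontrivial input is the classification of complex line bundles by $c_1$; everything else is bookkeeping. The one point requiring slight care is the implicit claim that a \emph{symplectic} trivialization, a real trivialization of the oriented bundle $\xi$, and a trivialization of the complex line bundle $L$ all amount to the same thing. This holds because the fiber group $Sp(2,\R)=SL(2,\R)$ deformation retracts onto $U(1)$: a bare topological frame can be upgraded to a $d\theta$-orthonormal, hence symplectic, frame by a fiberwise symplectic Gram--Schmidt procedure, which is continuous and globally defined. I expect this equivalence, rather than either implication separately, to be the main thing to state cleanly, since it is what licenses passing freely between the geometric object (a symplectic frame of $\xi$) and the topological invariant ($c_1$); the normalization in the converse direction is precisely an instance of this retraction.
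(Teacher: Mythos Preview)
Your proof is correct and follows the same approach as the paper: endow $(\xi,d\theta)$ with a compatible complex structure to obtain a complex line bundle, then invoke the classification of smooth complex line bundles by their first Chern class. The paper's argument is terser (it cites Wells for the classification step and omits the explicit normalization producing the symplectic frame), but the content is identical.
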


\begin{proof}
The contact structure $\xi$ admits the structure of a symplectic vector
bundle $(\xi,d\theta)$. By choosing a compatible complex structure $J$, we
obtain a complex line bundle $(\xi,J)$. It is well-known that smooth complex
line bundles are trivial if and only if their first Chern class vanishes,
see \cite[Chapter III, Section 4]{Wells}.
\end{proof}

The last step needs the first Chern class with integer coefficients.
Chern-Weil theory will not suffice in general. Henceforth, we shall assume
that the globally defined vector fields $U,V$ form a symplectic basis of $(\xi ,d\theta )$. Consider the
coframe $\theta ,\alpha ,\beta $ dual to $T,U,V$. Then $d\theta (U,V)=1$, so 
\begin{equation}
d\theta =\alpha {\wedge }\beta .  \label{eq:normalized_frame}
\end{equation}

\begin{lemma}
\label{lemma:cpx_structure_matrix} Let $J$ be a compatible complex structure
for the symplectic vector bundle $(\xi =\ker \theta ,d\theta )$. Then there
are smooth functions $a:M\rightarrow \R$ and $c:M\rightarrow \R_{>0}$ such that, with respect to the frame $U,V$, the complex structure $J
$ is represented by the matrix 
\begin{equation*}
J=\left( 
\begin{array}{cc}
a & -\frac{1+a^{2}}{c} \\ 
c & -a%
\end{array}%
\right) .
\end{equation*}
\end{lemma}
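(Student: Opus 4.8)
The plan is to reduce the statement to pointwise linear algebra in the symplectic basis $U,V$, and then to read off smoothness directly from that of $J$. Writing the endomorphism $J$ in the frame $U,V$ as
\[
J=\begin{pmatrix} a & b \\ c & d \end{pmatrix},
\qquad JU=aU+cV,\quad JV=bU+dV,
\]
the entries $a,b,c,d$ are the matrix coefficients of the smooth section $J$ of $\operatorname{End}(\xi)$ relative to the smooth frame $U,V$, hence are themselves smooth real-valued functions on $M$. Everything then follows by analysing the two defining properties of a compatible complex structure at each point: the algebraic identity $J^2=-\id$ and the compatibility (positivity) condition supplied by strict pseudoconvexity.

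First I would exploit $J^2=-\id$. For a $2\times2$ matrix the Cayley–Hamilton identity reads $J^2-(\operatorname{tr}J)J+(\det J)\id=0$, so $J^2=-\id$ forces $(\det J-1)\id=(\operatorname{tr}J)J$. If $\operatorname{tr}J\neq0$ this would make $J$ a real scalar multiple of $\id$, which cannot square to $-\id$ over $\R$; hence $\operatorname{tr}J=0$, i.e.\ $d=-a$, and consequently $\det J=1$, i.e.\ $-a^2-bc=1$. In particular $bc=-(1+a^2)$ never vanishes, so $c\neq0$ at every point, and we may solve $b=-\tfrac{1+a^2}{c}$. This already produces the asserted shape of the matrix, with $a$ and $c$ smooth. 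I would also note that $\det J=1$ gives $J^{\mathsf T}\Omega J=(\det J)\Omega=\Omega$ for free, so the symplectic-invariance part of compatibility is automatic and carries no extra information.

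It remains only to fix the sign of $c$, and this is the sole step with genuine content, since it is where compatibility (rather than $J$ being merely an almost complex structure) enters. Compatibility means that $g(X,Y):=d\theta(X,JY)$ is symmetric and positive definite. Writing $d\theta$ in the basis $U,V$ as the standard symplectic matrix $\Omega=\begin{pmatrix}0&1\\-1&0\end{pmatrix}$, the Gram matrix of $g$ is $\Omega J=\begin{pmatrix}c&-a\\-a&\tfrac{1+a^2}{c}\end{pmatrix}$, which is symmetric (consistent with $d=-a$) and has determinant $c\cdot\tfrac{1+a^2}{c}-a^2=1>0$; hence it is positive definite precisely when its $(1,1)$-entry $c$ is positive. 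Thus positivity of the Levi form forces $c>0$ pointwise, yielding the required $c:M\rightarrow\R_{>0}$ and completing the proof. The argument is essentially routine; the only subtlety to keep track of is that $c>0$ (not just $c\neq0$) is exactly the pointwise encoding of strict pseudoconvexity.
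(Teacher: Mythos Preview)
Your proof is correct and follows essentially the same approach as the paper: write $J$ as a $2\times 2$ matrix in the frame $U,V$, use $J^2=-\id$ to obtain the claimed form, and then use positive definiteness of $d\theta(\cdot,J\cdot)$ to pin down $c>0$. The paper's proof is terse (``writing out the condition\ldots shows that\ldots''), whereas you supply the details via Cayley--Hamilton and an explicit Gram matrix computation, but the underlying argument is the same.
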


\begin{proof}
With respect to the global frame $U,V$, the endomorphism $J$ is represented
by a $2\times 2$-matrix. Writing out the condition $J^{2}=-id$ shows that
the matrix representation for $J$ has the above form. The compatibility
condition means that $d\theta (\cdot ,J\cdot )$ is a metric, so it is
represented by a positive definite matrix. Writing out this matrix shows
that $c$ is a positive function.
\end{proof}

The following is motivated by our goal to convert the torsion flow (a PDE
for tensors) into a PDE for functions. Choose real-valued functions $a$, $b$
and $c$ where $b$ and $c$ are positive. We attach super- and subscripts to
indicate the dependence on these functions. In order to keep track of
deformations of the contact form, we express all data in the given frame $%
T,U,V$. Define 
\begin{equation}  \label{eq:deformation_frame}
\begin{split}
\theta_b&=b^2 \theta,\quad \alpha_b=b \alpha-2V(b)\theta, \quad \beta_b=b
\beta+2U(b)\theta, \\
T_b&=\frac{1}{b^2}T+\frac{2V(b)}{b^3}U-\frac{2U(b)}{b^3}V, \quad U_b=\frac{1%
}{b}U, \quad V_b=\frac{1}{b}V.
\end{split}%
\end{equation}

\begin{lemma}
The vector field $T_{b}$ is the Reeb vector field for $\theta _{b}$.
Furthermore, we have $d\theta _{b}=\alpha _{b}{\wedge }\beta _{b}$.
\end{lemma}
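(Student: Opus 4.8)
The plan is to verify both assertions by a direct computation in the global coframe $\{\theta,\alpha,\beta\}$ dual to $\{T,U,V\}$, using only the pairings $\theta(T)=1$, $\alpha(U)=\beta(V)=1$ with all other pairings vanishing, together with the structure equation $d\theta=\alpha\wedge\beta$ from \eqref{eq:normalized_frame}. The one substantive input is the expansion of the differential of $b$ in this coframe, namely $db=T(b)\,\theta+U(b)\,\alpha+V(b)\,\beta$; everything else is bookkeeping designed to track the $\theta$-correction terms in \eqref{eq:deformation_frame}.

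I would first establish the second claim $d\theta_b=\alpha_b\wedge\beta_b$, since it streamlines the Reeb verification. Applying the Leibniz rule to $\theta_b=b^2\theta$ gives $d\theta_b=2b\,db\wedge\theta+b^2\,d\theta$. Substituting the coframe expansion of $db$ annihilates the $T(b)\,\theta\wedge\theta$ term and leaves $d\theta_b=2bU(b)\,\alpha\wedge\theta+2bV(b)\,\beta\wedge\theta+b^2\,\alpha\wedge\beta$. Expanding $\alpha_b\wedge\beta_b=(b\alpha-2V(b)\theta)\wedge(b\beta+2U(b)\theta)$ and using $\theta\wedge\theta=0$ reproduces exactly these three terms, so the two expressions agree.

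With $d\theta_b=\alpha_b\wedge\beta_b$ in hand, the Reeb conditions follow quickly. The normalization $\theta_b(T_b)=1$ is immediate from $\theta(T)=1$ and $\theta(U)=\theta(V)=0$, since only the $\tfrac{1}{b^2}T$ term of $T_b$ survives the pairing. For the second Reeb equation I would use $d\theta_b(T_b,\cdot)=\alpha_b(T_b)\,\beta_b-\beta_b(T_b)\,\alpha_b$, so that it suffices to check $\alpha_b(T_b)=0$ and $\beta_b(T_b)=0$. Each of these is a two-term cancellation: the $b\alpha$ part of $\alpha_b$ paired against the $U$-component of $T_b$ cancels the $-2V(b)\theta$ part paired against the $T$-component, and symmetrically for $\beta_b$.

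The calculation carries no genuine obstacle; the only thing to watch is sign and factor bookkeeping in the correction terms, and the conceptual point is precisely that the $\theta$-corrections in the definitions of $\alpha_b,\beta_b,T_b$ are chosen so that these cancellations occur. In particular, the same cancellations that make $\alpha_b\wedge\beta_b$ reproduce $d\theta_b$ are what force $\alpha_b(T_b)=\beta_b(T_b)=0$, so the two parts of the lemma are really two faces of a single identity.
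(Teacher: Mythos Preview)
Your proposal is correct and follows exactly the approach the paper indicates: the paper simply says ``This can be checked by plugging in the vector field into the defining equations~\eqref{eq:defining_Reeb}. The second assertion is obtained by writing out the terms,'' and you have carried out precisely that direct verification in full detail.
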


This can be checked by plugging in the vector field into the defining
equations~\eqref{eq:defining_Reeb}. The second assertion is obtained by
writing out the terms.

\begin{remark}
If $b$ is a constant function, then the deformation from %
\eqref{eq:deformation_frame} corresponds to a $\xi$-homothetic deformation
as defined in \cite[Section 10.4]{Blair}. We take $b^2$ in $\theta_b$ to
have fewer expressions with square roots.
\end{remark}

Define a complex structure by 
\begin{equation*}
J_{abc}(U_b)=aU_b+cV_b, \quad
J_{abc}(V_b)=-\frac{1+a^2}{c}U_b-aV_b
\end{equation*}
By Lemma~\ref{lemma:cpx_structure_matrix}, this is the most general choice.

\begin{remark}
In higher dimensions, strictly pseudoconvex CR-manifolds require an integrability condition, see Definition~\ref{def:CR}, which is trivially satisfied in dimension $3$.
\end{remark}

We now compute the Tanaka-Webster connection as in Section~\ref%
{sec:pseudoherm_connection}. We use the coframe $\theta ,\theta ^{1},\theta
^{\bar{1}}$, where 
\begin{equation}
\label{eq:cpx_coframe}
\begin{split}
\theta _{abc}^{1}& =\sqrt{2c(a^{2}+1)}\left( \frac{-i}{2(a-i)}\alpha _{b}+%
\frac{i}{2c}\beta _{b}\right) , \\
\theta _{abc}^{\bar{1}}& =\sqrt{2c(a^{2}+1)}\left( \frac{i}{2(a+i)}\alpha
_{b}-\frac{i}{2c}\beta _{b}\right) .
\end{split}%
\end{equation}%
This satisfies our normalization condition $d\theta _{b}=\alpha _{b}{\wedge }%
\beta _{b}=i\theta _{b}^{1}{\wedge }\theta _{b}^{\bar{1}}$. The
corresponding eigenvectors of $J_{abc}$ are 
\begin{equation}
\label{eq:cpx_frame}
\begin{split}
Z_{1}^{abc}& =\frac{1}{\sqrt{2c(a^{2}+1)}}\left(
(a^{2}+1)U_{b}+c(a-i)V_{b}\right) , \\
Z_{\bar{1}}^{abc}& =\frac{1}{\sqrt{2c(a^{2}+1)}}\left(
a^{2}+1)U_{b}+c(a+i)V_{b}\right) .
\end{split}%
\end{equation}

\subsection{Converting the torsion flow into a system of PDE's for the
functions $a,b,c$}

To write down the equations of the torsion flow, we need the work out the
torsion tensor. We have 
\begin{equation*}
Z_{\bar 1}^{abc} \otimes \theta^1_{abc}= \left( \frac{-i(a+i)}{2}\right)
U_b\otimes \alpha_b+ \left( \frac{i(a^2+1)}{2c} \right) U_b \otimes \beta_b+
\left( \frac{-i(a+i)c}{2(a-i)} \right) V_b \otimes \alpha_b+ \left( \frac{%
i(a+i)}{2} \right) V_b \otimes \beta_b,
\end{equation*}
so we find 
\begin{equation*}
\begin{split}
A_{J,\theta}^{abc}&=A_{11}^{abc} Z_{\bar 1}^{abc} \otimes
\theta^1_{abc}+A_{\bar 1 \bar 1}^{abc} Z_{1}^{abc} \otimes \theta^{\bar
1}_{abc}=+2\re(A_{11}^{abc}Z_{\bar 1}^{abc} \otimes \theta^1_{abc} ) \\
&=+\left( \re(A_{11}^{abc})+a\im(A_{11}^{abc}) \right) U_b\otimes \alpha_b - 
\frac{\im(A_{11}^{abc})(a^2+1)}{c} U_b\otimes \beta_b \\
& \phantom{=}-\left( \re(A_{11}^{abc})\left( \frac{-2ac}{a^2+1} \right) +\im%
(A_{11}^{abc})\frac{(1-a^2)c}{a^2+1} \right) V_b\otimes \alpha_b -\left( \re%
(A_{11}^{abc})+a\im(A_{11}^{abc}) \right) V_b \otimes \beta_b.
\end{split}%
\end{equation*}
Hence the first equation of the torsion flow \eqref{eq:torsion_flow}, $\dot
J_{abc}=2A_{J,\theta}^{abc}$ is equivalent to the system 
\begin{equation}  \label{eq:system_J-part_torsion_flow}
\begin{split}
\dot a&=2\left( \re(A_{11}^{abc})+a\im(A_{11}^{abc}) \right) \\
\dot c&=-2\left( \re(A_{11}^{abc})\left( \frac{-2ac}{a^2+1} \right) +\im%
(A_{11}^{abc})\frac{(1-a^2)c}{a^2+1} \right) \\
\frac{d}{dt} {\left( -\frac{(1+a^2)}{c}\right)}&=-2 \im(A_{11}^{abc})\frac{%
a^2+1}{c} \\
\frac{d}{dt}{\ \left( -a \right) }&=-2\left( \re(A_{11}^{abc})+a\im%
(A_{11}^{abc}) \right)
\end{split}%
\end{equation}
Indeed, modulo $\theta$ we have 
\begin{equation*}
U_b\otimes \alpha_b\equiv U\otimes \alpha,\quad
U_b\otimes \beta_b\equiv U \otimes \beta,\quad
V_b\otimes \alpha_b\equiv V\otimes \alpha,\quad
V_b\otimes \beta_b\equiv V\otimes \beta,
\end{equation*}
so we obtain the above system by looking at the coefficients of $U\otimes
\alpha$, $U \otimes \beta$, $V\otimes \alpha$, and $V\otimes \beta$. This
works since these tensors are time-independent.

\begin{lemma}[A smaller system for the $J$-part of the torsion flow]
The system given by \eqref{eq:system_J-part_torsion_flow} is equivalent to
system given by

\begin{equation}  \label{eq:system_J-part_torsion_flow_simple}
\begin{split}
\dot a&=2\left( \re(A_{11}^{abc})+a\im(A_{11}^{abc}) \right) \\
\dot c&=-2\left( \re(A_{11}^{abc})\left( \frac{-2ac}{a^2+1}\right) +\im%
(A_{11}^{abc})\frac{(1-a^2)c}{a^2+1} \right) .
\end{split}%
\end{equation}
\end{lemma}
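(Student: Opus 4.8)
The plan is to observe that \eqref{eq:system_J-part_torsion_flow} contains two redundant equations, so that only the first two carry any information. One direction of the equivalence is immediate: any solution of the full system \eqref{eq:system_J-part_torsion_flow} in particular satisfies its first two equations, which are exactly \eqref{eq:system_J-part_torsion_flow_simple}. It therefore suffices to prove the converse, namely that the third and fourth equations of \eqref{eq:system_J-part_torsion_flow} follow automatically once the first two hold.

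The fourth equation is trivial: since $\frac{d}{dt}(-a)=-\dot a$, it is literally $-1$ times the first equation, so it holds precisely when the first does. The real content is the third equation. First I would expand its left-hand side by the quotient rule,
\[
\frac{d}{dt}\left( -\frac{1+a^2}{c} \right) = -\frac{2a\dot a}{c} + \frac{(1+a^2)\dot c}{c^2},
\]
and then substitute the expressions for $\dot a$ and $\dot c$ supplied by the first two equations of \eqref{eq:system_J-part_torsion_flow}. Collecting the coefficients of $\re(A_{11}^{abc})$ and $\im(A_{11}^{abc})$ separately, the $\re(A_{11}^{abc})$-terms cancel and the $\im(A_{11}^{abc})$-terms combine into $-2\im(A_{11}^{abc})\frac{a^2+1}{c}$, which is exactly the right-hand side of the third equation. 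Hence the third equation becomes an identity as soon as the first two are imposed.

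The calculation is entirely routine; I expect no genuine obstacle, only the bookkeeping of keeping the $\re$ and $\im$ contributions apart and performing the simplification $-4a^2-2(1-a^2)=-2(a^2+1)$. The conceptual point worth recording is that the four equations in \eqref{eq:system_J-part_torsion_flow} arise by matching coefficients of the four time-independent tensors $U\otimes\alpha$, $U\otimes\beta$, $V\otimes\alpha$, $V\otimes\beta$, whereas the complex structure $J_{abc}$ has only the two functional degrees of freedom $a$ and $c$, its remaining entry being constrained to $-\frac{1+a^2}{c}$. The redundancy between the pairs of equations in \eqref{eq:system_J-part_torsion_flow} is simply the differentiated form of this algebraic constraint among the matrix entries of $J_{abc}$, which is why the reduced system \eqref{eq:system_J-part_torsion_flow_simple} loses nothing.
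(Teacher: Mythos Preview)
Your proposal is correct and follows essentially the same approach as the paper: show that the first equation implies the fourth, then verify by direct computation (quotient rule plus substitution of $\dot a$ and $\dot c$) that the first two equations imply the third. The paper's proof is exactly this calculation, without your additional conceptual remark about the algebraic constraint on the entries of $J_{abc}$.
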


\begin{proof}
The first equation of \eqref{eq:system_J-part_torsion_flow} implies the
fourth. We now verify that the first and second equation of %
\eqref{eq:system_J-part_torsion_flow} imply the third. 
\begin{equation*}
\begin{split}
\frac{d}{dt} \left( -\frac{1+a^2}{c} \right) &=-\frac{2a\dot a}{c} +\frac{%
1+a^2}{c^2}\dot c \\
&=-\frac{4a}{c} \re(A_{11}^{abc})-\frac{4a^2}{c} \im(A_{11}^{abc}) +\frac{%
1+a^2}{c^2}\re(A_{11}^{abc})\frac{4ac}{a^2+1} -\frac{1+a^2}{c^2}\im%
(A_{11}^{abc})\frac{2c(1-a^2)}{a^2+1} \\
&=-2 \im(A_{11}^{abc}) \left( \frac{2a^2}{c}+\frac{1-a^2}{c} \right)=-2 \im%
(A_{11}^{abc}) \frac{a^2+1}{c} .
\end{split}%
\end{equation*}
\end{proof}

On the other hand, the second equation of the torsion flow %
\eqref{eq:torsion_flow} reduces to 
\begin{equation*}
\frac{d}{dt}( \theta_b )=\frac{d}{dt} (b^2) \theta=-2W^{abc} b^2 \theta,
\end{equation*}
so we can reduce the torsion flow to a system of PDE's for the functions $%
a,b,c$, giving us the following proposition.

\begin{proposition}
\label{proposition:torsion_flow_simple_PDE} Let $(M^3,\theta,J)$ be a
CR-manifold with $c_1(\xi,J)=0$. Then there exists a basis of $T^*M$, and
functions $a,b,c$ such that

\begin{itemize}
\item the complex structure $J$ can be written as $J_{abc}$ .

\item the torsion flow \eqref{eq:torsion_flow} is equivalent to the system 
\begin{equation}  \label{eq:torsion_flow_simple}
\begin{split}
\dot a&=2\left( \re(A_{11}^{abc})+a\im(A_{11}^{abc}) \right) \\
\dot c&=-2\left( \re(A_{11}^{abc})\left( \frac{-2ac}{a^2+1}\right) +\im%
(A_{11}^{abc})\frac{(1-a^2)c}{a^2+1} \right) \\
\frac{d}{dt}\left( b^2 \right)&=-2W^{abc} b^2.
\end{split}%
\end{equation}
\end{itemize}
\end{proposition}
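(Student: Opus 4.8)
The plan is to assemble the pieces developed in this section, since the proposition is essentially a collation of the preceding lemmas. First I would invoke the hypothesis $c_1(\xi,J)=0$ together with Lemma~\ref{lemma:global_frame_chern_class} to produce a global trivialization $U,V$ of $\xi$ with $d\theta(U,V)=1$. Fixing these vector fields, and the Reeb field $T$ of the initial contact form $\theta$, once and for all at $t=0$, I obtain a \emph{time-independent} coframe $\theta,\alpha,\beta$ dual to $T,U,V$ with $d\theta=\alpha\wedge\beta$ as in \eqref{eq:normalized_frame}. This reference frame is the backbone of the whole argument: every time-dependent object in the flow is to be expressed in terms of it, so that $\frac{d}{dt}$ acts only on the scalar functions $a,b,c$.

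Next I would encode the pseudohermitian structure $(J_{(t)},\theta_{(t)})$ by these scalar functions. Since the contact structure is fixed and $\theta$ evolves conformally under the torsion flow, I write $\theta_{(t)}=b^2\theta=\theta_b$ as in \eqref{eq:deformation_frame}, with $b=b(t)>0$; by the accompanying Lemma the vector field $T_b$ is then the Reeb field of $\theta_b$ and $d\theta_b=\alpha_b\wedge\beta_b$. Applying Lemma~\ref{lemma:cpx_structure_matrix} to the compatible complex structure $J_{(t)}$ on $(\xi,d\theta_b)$ yields real functions $a=a(t)$ and $c=c(t)>0$ with $J_{(t)}=J_{abc}$; note that in terms of the fixed frame $U,V$ the endomorphism $J_{abc}$ depends only on $a$ and $c$. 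Thus the entire solution is recorded by the triple $(a,b,c)$, which establishes the first bullet.

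It remains to translate the two equations of the torsion flow \eqref{eq:torsion_flow} into equations for these functions. For the $J$-component, the explicit expansion of the torsion tensor $A^{abc}_{J,\theta}$ against the fixed basis $U\otimes\alpha$, $U\otimes\beta$, $V\otimes\alpha$, $V\otimes\beta$ shows that $\dot J_{abc}=2A^{abc}_{J,\theta}$ is equivalent to the system \eqref{eq:system_J-part_torsion_flow}, which in turn reduces to \eqref{eq:system_J-part_torsion_flow_simple} by the preceding Lemma; this supplies the $\dot a$ and $\dot c$ equations of \eqref{eq:torsion_flow_simple}. For the $\theta$-component, $\partial_t\theta_{(t)}=-2W\theta_{(t)}$ becomes $\frac{d}{dt}(b^2)\,\theta=-2W^{abc}b^2\theta$, giving the third equation. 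Collecting the three equations completes the argument.

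The step requiring the most care is the consistency of the bookkeeping: reading off coefficients against $U\otimes\alpha$ and the other three tensors is legitimate precisely because these fields are genuinely time-independent. This is exactly what the choice of deformation \eqref{eq:deformation_frame} guarantees, since although $\theta_b,\alpha_b,\beta_b$ and the complex frame \eqref{eq:cpx_frame} all move with $t$, they do so only through $a,b,c$; consequently the torsion flow, a priori a PDE for tensors, collapses to the scalar system \eqref{eq:torsion_flow_simple}. I would also verify that $J_{abc}$ stays compatible with $d\theta_b$ throughout, which follows from the normalization $d\theta_b=i\theta_b^1\wedge\theta_b^{\bar 1}$ recorded after \eqref{eq:cpx_coframe}.
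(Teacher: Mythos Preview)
Your proposal is correct and follows essentially the same route as the paper: the proposition is a summary of the preceding discussion, and you have accurately identified and assembled the ingredients (Lemma~\ref{lemma:global_frame_chern_class} for the global frame, Lemma~\ref{lemma:cpx_structure_matrix} for the parametrization $J_{abc}$, the reduction of the $J$-equation via \eqref{eq:system_J-part_torsion_flow} and the subsequent lemma, and the direct computation for the $\theta$-equation). Your remark about reading off coefficients against the time-independent tensors $U\otimes\alpha$, etc., is exactly the point the paper makes (``modulo $\theta$'') to justify the passage to the scalar system.
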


\begin{remark}
Spatial derivatives of $a,b,c$ come in via the definition of torsion and
Webster curvature.
\end{remark}

\section{$3$-manifolds with constant structure constants and the Tanaka
connection}

\label{section:homogeneous_contact_manifolds} In this section we consider
manifolds $M^3$ that admit global $1$-forms $\omega^1,\omega^2, \omega^3$
such that

\begin{enumerate}
\item $\omega^1,\omega^2, \omega^3$ form a basis of $T^*M$.

\item The structure coefficients are constant, i.e.~$d\omega ^{i}=\sum_{j<k}c_{jk}^{i}\omega ^{j}{\wedge }\omega ^{k}$ with $c_{jk}^{i}$ constant.

\item There is a contact form $\theta $ of the form $\theta =\sum_{i}c_{i}\omega ^{i}$, where $c_{i}$ are constant.
\end{enumerate}

We shall call such a contact manifold a \textbf{homogeneous contact manifold}%
. 
This terminology is not standard, but it serves a useful purpose in this
note.
Let us point out that a related, but not equivalent notion, also referred to as homogeneous contact, was used by Perrone, \cite{pr}.

\begin{remark}
The structure coefficients are the structure constants of some $3$%
-dimensional Lie-algebra. Indeed, the dual frame $\{ X_1,X_2,X_3 \}$
satisfies 
\begin{equation*}
[X_j,X_k]=-\sum_{i}c^i_{jk}X_i,
\end{equation*}
the Lie bracket on vector fields satisfies the Jacobi identity.
We shall call this the \emph{Lie algebra of a homogeneous contact manifold}.
\end{remark}

From Lemma~\ref{lemma:global_frame_chern_class} we get immediately.

\begin{lemma}
Homogeneous contact manifolds have trivial Chern class.
\end{lemma}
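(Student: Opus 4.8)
The plan is to reduce this immediately to Lemma~\ref{lemma:global_frame_chern_class}, which asserts that a contact $3$-manifold $(M,\xi=\ker\theta)$ admits a global trivialization of $\xi$ precisely when $c_1(\xi)=0$. Thus it suffices to exhibit a global frame for the contact distribution of an arbitrary homogeneous contact manifold, and the entire content of the argument is the construction of such a frame from the homogeneity data.

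First I would unpack the defining data: we have global $1$-forms $\omega^1,\omega^2,\omega^3$ forming a basis of $T^*M$, a dual frame $X_1,X_2,X_3$ of $TM$, and a contact form $\theta=\sum_i c_i\omega^i$ with the $c_i$ constant. At each point $p$ the contact hyperplane $\xi_p=\ker\theta_p$ consists of the vectors $\sum_i v_i X_i|_p$ satisfying $\sum_i c_i v_i=0$. Since the $c_i$ are constant, this single linear condition is the same in every fiber, so the two-dimensional solution space $\{(v_1,v_2,v_3):\sum_i c_i v_i=0\}$ is independent of $p$. This $p$-independence is the one place where the homogeneity hypothesis (constancy of the structure coefficients, and in particular of the $c_i$) is genuinely used.

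Next I would pick any two linearly independent constant vectors $(u_1,u_2,u_3)$ and $(w_1,w_2,w_3)$ in this solution space and set $U=\sum_i u_i X_i$ and $V=\sum_i w_i X_i$. These are globally defined vector fields lying in $\xi$ at every point, and they are pointwise linearly independent because $X_1,X_2,X_3$ form a global frame of $TM$. Hence $U,V$ trivialize $\xi$. If one insists on the normalization $d\theta(U,V)=1$ used elsewhere, one can rescale $V$ pointwise, using that $d\theta$ is nondegenerate on $\xi$; but this is not needed for the Chern class statement, since a global frame already exhibits $\xi$ as a trivial bundle.

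Finally, Lemma~\ref{lemma:global_frame_chern_class} then yields $c_1(\xi)=0$, completing the proof. I expect no real obstacle here: the statement is essentially a corollary of Lemma~\ref{lemma:global_frame_chern_class}, and the only substantive step is observing that constancy of the $c_i$ makes the kernel of $\theta$ a fixed subspace in the trivializing frame, so that constant-coefficient combinations of the $X_i$ already furnish a global frame for $\xi$.
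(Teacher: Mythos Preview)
Your proposal is correct and follows the same approach as the paper, which simply states that the lemma follows immediately from Lemma~\ref{lemma:global_frame_chern_class}. You have merely made explicit the construction of the global frame for $\xi$ that the paper leaves to the reader: constancy of the $c_i$ makes $\ker\theta$ a fixed two-plane in the trivializing frame $X_1,X_2,X_3$, so constant-coefficient combinations yield the required global sections.
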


Before we define a CR structure on such manifolds, we use the following
lemma to provide a better coframe. In many cases, this lemma can be improved
upon, but this version is sufficiently convenient.

\begin{lemma}
\label{lemma:nice_basis}
Let $(M,\{ \omega^i\}_i,\theta )$ be a homogeneous contact manifold. Then
there is a basis $\{ \tilde \omega^i\}_i$ such that

\begin{itemize}
\item $\tilde \omega^1$ is contact, and $c^1_{23}=1$.

\item The structure coefficients $d\tilde{\omega}^{i}=\sum_{j,k}c_{jk}^{i}%
\tilde{\omega}^{j}{\wedge }\tilde{\omega}^{k}$ satisfy 
$c_{12}^{1}=c_{13}^{1}=c^2_{12}=c^3_{13}=0$.
\end{itemize}
\end{lemma}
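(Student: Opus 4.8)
The plan is to treat this as a problem in linear algebra on the three-dimensional space $V=\Span\{\omega^1,\omega^2,\omega^3\}$ of globally defined $1$-forms. Any constant invertible change of coframe $\tilde\omega^i=\sum_j P^i_j\,\omega^j$ keeps the structure coefficients constant, since $d\tilde\omega^i=\sum_j P^i_j\,d\omega^j$ is again a constant-coefficient element of $\Lambda^2 V$ and the $\omega^j$ are constant combinations of the $\tilde\omega^k$. So I am free to perform any such change, and I only need to arrange the stated normalizations one subspace at a time.

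First I would put $\tilde\omega^1=\theta$. The Reeb field $T$ is determined by the constant-coefficient linear equations $\theta(T)=1$, $\iota_T d\theta=0$, so $T=\sum_i t_i X_i$ has constant coefficients in the dual frame. Choose $\tilde\omega^2,\tilde\omega^3$ to be constant $1$-forms spanning the annihilator $\{\phi\in V:\phi(T)=0\}$; since $\theta(T)=1$ this is complementary to $\theta$, so $\{\theta,\tilde\omega^2,\tilde\omega^3\}$ is a coframe. Writing $d\theta=c^1_{12}\tilde\omega^1\wedge\tilde\omega^2+c^1_{13}\tilde\omega^1\wedge\tilde\omega^3+c^1_{23}\tilde\omega^2\wedge\tilde\omega^3$ and contracting with $T$ (which annihilates $\tilde\omega^2,\tilde\omega^3$ and sends $\tilde\omega^1\mapsto1$) gives $\iota_T d\theta=c^1_{12}\tilde\omega^2+c^1_{13}\tilde\omega^3=0$, so $c^1_{12}=c^1_{13}=0$. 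Contactness then reads $\theta\wedge d\theta=c^1_{23}\,\tilde\omega^1\wedge\tilde\omega^2\wedge\tilde\omega^3\neq0$, i.e.\ $c^1_{23}\neq0$; a final rescaling of $\tilde\omega^2$ will set $c^1_{23}=1$.

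The substantive step is $c^2_{12}=c^3_{13}=0$. These say that the endomorphism $N:=\ad_T|_\xi$ of $\xi=\Span\{X_2,X_3\}$ (which maps $\xi$ into itself precisely because $c^1_{12}=c^1_{13}=0$) has vanishing diagonal in the chosen basis. The key observation is that $N$ is trace-free. Since $\theta\wedge d\theta$ is top-dimensional we have $d(\theta\wedge d\theta)=0$, and $\iota_T(\theta\wedge d\theta)=(\iota_T\theta)d\theta-\theta\wedge(\iota_T d\theta)=d\theta$, so Cartan's formula gives
\[
\mathcal{L}_T(\theta\wedge d\theta)=d\,\iota_T(\theta\wedge d\theta)+\iota_T d(\theta\wedge d\theta)=d(d\theta)=0 .
\]
Thus $T$ preserves the volume form $\theta\wedge d\theta$; since $\mathcal{L}_T\theta=0$, this forces $\operatorname{tr}N=0$. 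For a trace-free real $2\times2$ operator one can always choose a basis with zero diagonal: if $N\neq0$ pick $v\in\xi$ with $Nv\notin\Span(v)$ (possible, else $N$ is a trace-free scalar, hence $0$) and use $\{\,\tilde X_2:=v,\ \tilde X_3:=[T,v]=Nv\,\}$. Then $[T,\tilde X_2]=\tilde X_3$ has no $\tilde X_2$-component, and Cayley--Hamilton gives $[T,\tilde X_3]=N^2v=-(\det N)\,v$, which has no $\tilde X_3$-component; in the convention $[X_j,X_k]=-\sum_i c^i_{jk}X_i$ this is exactly $c^2_{12}=c^3_{13}=0$. Passing to the dual coframe and rescaling $\tilde\omega^2$ to restore $c^1_{23}=1$ (which preserves the vanishing diagonal, since rescaling a single basis vector leaves the diagonal entries unchanged) completes the construction.

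The main obstacle is the third step: one must recognize that simultaneously zeroing both diagonal entries of a $2\times2$ matrix by a change of basis is possible exactly when the trace vanishes, and that this trace-free condition is supplied for free by the fact that the Reeb field of any contact form preserves the contact volume form. The remaining points---that the successive basis changes are mutually compatible and that the final rescaling does not disturb the earlier normalizations---are routine.
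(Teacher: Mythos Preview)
Your proof is correct, and it takes a genuinely different route from the paper's argument. The paper introduces an auxiliary compatible complex structure $J$, considers the self-adjoint operator $h=\tfrac12\,\mathcal L_T J$ on $\xi$, diagonalizes $h$ using results from Blair's book, and then invokes the Levi--Civita identity $\nabla_X T=-JX-JhX$ to see that $[T,e_2]$ and $[T,e_3]$ are proportional to $e_3$ and $e_2$ respectively. Your argument bypasses all of this Riemannian machinery: you observe directly that the Reeb field preserves the contact volume form, deduce that $N=\ad_T|_\xi$ is trace-free, and then use the elementary fact that a nonzero trace-free $2\times 2$ operator admits a cyclic vector $v$, so that in the basis $\{v,Nv\}$ the matrix is off-diagonal by Cayley--Hamilton. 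This is cleaner, fully self-contained, and avoids the external citations to \cite{Blair}; the paper's approach, on the other hand, hands you a $J$-basis ($e_3=Je_2$) for free, which is convenient later when one wants $a_0=0$, $c_0=1$ as initial data for the flow. Your final check that rescaling one basis vector preserves both $c^1_{23}=1$ and the vanishing diagonal is correct and worth keeping, since it is the one place the two normalizations could interfere.
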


\begin{proof}
Choose a compatible complex structure $J$ for $(\xi,d\theta )$.
Consider the operator $h:=\frac{1}{2}\mathcal L_T J $.
From \cite[Lemma 6.2]{Blair} we see that $h$ is self-adjoint with respect to the metric $d\theta(\cdot, J \cdot )$, and we also get the identity
$$
0=\frac{1}{2}\mathcal L_T J^2=J h+hJ.
$$
Since $h$ is self-adjoint, we can find a basis of eigenvectors $X,Y$ of $h$ for $\xi$.
If the eigenvalue of $X$ is $\lambda$, then $JX$ is an eigenvector with eigenvalue $-\lambda$,
$$
hJX=-JhX=-\lambda JX,
$$
so we can assume that $Y=JX$.
We consider the Levi-Civita connection $\nabla$ for $g=\theta\otimes \theta+d\theta(\cdot,J \cdot)$.
Take the basis $e_1=T,e_2=X,e_3=JX$, where $T$ is the Reeb field of $\theta$.
Then 
\[
\begin{split}
[e_1,e_2]&=[T,X]=\nabla_T X-\nabla_XT \\
&=+JX+\lambda JX-\mu JX.
\end{split}
\]
for some $\mu\in \R$. In the last step we have used the identity (see \cite[Lemma 6.2]{Blair} )
$$
\nabla_XT=-JX-JhX.
$$
The same steps work for $[e_1,e_3]$, so we conclude that there are constants $C_1,C_2$ such that
$$
[e_1,e_2]=C_1 e_3 \quad [e_1,e_3]=C_2 e_2.
$$
Consider the dual basis $\{ \omega^i \}_i$.
Then $\omega^1$ is a contact form, and since $T$ is the Reeb field, we have
$$
0=i_{e_1}d\omega^1=c^1_{12}\omega^2+c^1_{13}\omega^3.
$$
Hence $c^1_{12}=c^1_{13}=0$, and from Lie bracket computations we see that $c^2_{12}=c^3_{13}=0$.
By rescaling, we see that the claim holds.
\end{proof}

We assume now that $\omega ^{1},\omega ^{2},\omega ^{3}$ is a basis that is
provided by this lemma. Take the basis $X_{1},X_{2},X_{3}$ that is dual to $%
\omega ^{1},\omega ^{2},\omega ^{3}$, i.e. 
\begin{equation*}
\omega ^{i}(X_{j})=\delta _{j}^{i}.
\end{equation*}%
We have $i_{X_{1}}\omega ^{1}=1$ and $i_{X_{1}}d\omega
^{1}=\sum_{k}c_{1k}^{1}\omega ^{k}=0$, so $X_{1}$ is the Reeb vector field
for $\omega ^{1}$. Note that $\xi =\ker \omega ^{1}=\Span(X_{2},X_{3})$.
Also, if $c_{23}^{1}=1$, then $d\omega ^{1}=\omega ^{2}{\wedge }\omega ^{3}$%
, so we have the right normalization convention for the setup of the Tanaka
connection described in Formula~\eqref{eq:normalized_frame} with $\theta
=\omega ^{1}$, $\alpha =\omega ^{2}$ and $\beta =\omega ^{3}$.

Choose constants $a\in \R$ and $b,c>0$, and define a complex
structure on $\xi $ (or CR-structure on $M$) following Lemma~\ref%
{lemma:cpx_structure_matrix} by 
\begin{equation}
J_{abc}=aX_{2}\otimes \omega ^{2}+cX_{3}\otimes \omega ^{2}-\frac{1+a^{2}}{c}%
X_{2}\otimes \omega ^{3}-aX_{3}\otimes \omega ^{3}.
\label{eq:J_homogeneous_contact}
\end{equation}%
We call such an endomorphism a {\bf homogeneous complex structure}, and 
we refer to a homogeneous contact manifold together with the above
complex structure as a \textbf{homogeneous CR-manifold} or a \textbf{%
homogeneous CR structure}. By direct computation, we obtain the following
result for the Tanaka-Webster connection, its torsion and the Webster curvature.

\begin{proposition}
\label{proposition:Tanaka_connection_homogeneous_contact} 
Let $(M,\{\omega
^{i}\}_{i},\theta )$ be a homogeneous contact manifold with basis provided by Lemma~\ref{lemma:nice_basis}. Fix $%
a\in \R$ and $b,c>0$ and define $J_{abc}$ as in formula~%
\eqref{eq:J_homogeneous_contact}. Then the connection form for the Tanaka
connection of the pseudohermitian manifold $(M,\theta _{b}=b^{2}\theta
,J_{abc})$ is given by 
\begin{equation*}
\omega _{1}^{\phantom{1}1}=\frac{i}{b^{2}}\left( -\frac{%
a^{2}+1}{2c}c_{12}^{3}+\frac{c}{2}c_{13}^{2}\right)
\theta _{b}+\frac{i}{b}\left( c\cdot c_{23}^{2}-a\cdot c_{23}^{3}\right)
\alpha _{b}+\frac{i}{b}\left( \frac{a^{2}+1}{c}c_{23}^{3}-a\cdot
c_{23}^{2}\right) \beta _{b}
\end{equation*}%
Its torsion is given by 
\begin{equation*}
{A_{\phantom{1}\bar{1}}^{1\phantom{1}}}^{abc}=\frac{1}{b^{2}}\left( i\frac{a^{2}+1}{2c}c_{12}^{3}-i\frac{c(a+i)}{2(a-i)}%
c_{13}^{2}\right) ,
\end{equation*}%
and its Webster curvature by 
\begin{equation*}
W^{abc}=\frac{1}{b^{2}}\left( \frac{a^{2}+1}{2c}%
c_{12}^{3}-\frac{c}{2}c_{13}^{2}-c\cdot \left(
c_{23}^{2}\right) ^{2}+2a\cdot c_{23}^{2}c_{23}^{3}-\frac{a^{2}+1}{c}\left(
c_{23}^{3}\right) ^{2}\right) .
\end{equation*}
\end{proposition}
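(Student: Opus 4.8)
The plan is to exploit that the structure constants $c^i_{jk}$ and the parameters $a,b,c$ are all constant, so that computing the Tanaka--Webster connection reduces to linear algebra together with the structure equations $d\omega^i=\sum_{j<k}c^i_{jk}\omega^j\wedge\omega^k$. Since $b$ is constant, the deformed coframe \eqref{eq:deformation_frame} collapses to $\theta_b=b^2\omega^1$, $\alpha_b=b\,\omega^2$, $\beta_b=b\,\omega^3$, so that $\theta^1_{abc}$ in \eqref{eq:cpx_coframe} becomes an explicit constant-coefficient combination of $\omega^2$ and $\omega^3$, and the normalization $d\theta_b=\alpha_b\wedge\beta_b=i\,\theta^1_{abc}\wedge\theta^{\bar{1}}_{abc}$ holds by construction.

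First I would differentiate $\theta^1_{abc}$, insert the three structure equations for $d\omega^1,d\omega^2,d\omega^3$ (using $c^1_{12}=c^1_{13}=c^2_{12}=c^3_{13}=0$ and $c^1_{23}=1$ from Lemma~\ref{lemma:nice_basis}), and collect the outcome as a combination of $\omega^1\wedge\omega^2$, $\omega^1\wedge\omega^3$ and $\omega^2\wedge\omega^3$. Inverting \eqref{eq:cpx_coframe} to express $\alpha_b,\beta_b$ through $\theta^1_{abc},\theta^{\bar{1}}_{abc}$, and using $\omega^2\wedge\omega^3=b^{-2}\,d\theta_b=i\,b^{-2}\theta^1_{abc}\wedge\theta^{\bar{1}}_{abc}$, I would rewrite $d\theta^1_{abc}$ in the basis of $2$-forms $\{\,\theta_b\wedge\theta^1_{abc},\ \theta_b\wedge\theta^{\bar{1}}_{abc},\ \theta^1_{abc}\wedge\theta^{\bar{1}}_{abc}\,\}$.

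Next I would read off the connection and torsion from the first defining equation of \eqref{eq:defining_Tanaka_Webster}, which in dimension three reads $d\theta^1_{abc}=\theta^1_{abc}\wedge\omega_1{}^1+\theta_b\wedge\tau^1$ with $\tau^1$ of type $(0,1)$, i.e.\ $\tau^1=A^1{}_{\bar{1}}\theta^{\bar{1}}_{abc}$ (the purity being forced by the condition $\tau_\alpha\wedge\theta^\alpha=0$ once one writes $\tau_1=\overline{\tau^1}$). Matching coefficients then gives $A^1{}_{\bar{1}}$ as the coefficient of $\theta_b\wedge\theta^{\bar{1}}_{abc}$, the $\theta_b$-component of $\omega_1{}^1$ as minus the coefficient of $\theta_b\wedge\theta^1_{abc}$, and the $\theta^{\bar{1}}_{abc}$-component of $\omega_1{}^1$ as the coefficient of $\theta^1_{abc}\wedge\theta^{\bar{1}}_{abc}$. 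The one structural subtlety is that the $\theta^1_{abc}$-component of $\omega_1{}^1$ is killed by the wedge with $\theta^1_{abc}$ and hence cannot be recovered from $d\theta^1_{abc}$ alone; it is fixed instead by the skew-Hermitian condition $\omega_1{}^1+\omega_{\bar{1}}{}^{\bar{1}}=0$, i.e.\ $\omega_1{}^1$ purely imaginary, which forces this component to be minus the conjugate of the $\theta^{\bar{1}}_{abc}$-component and, as a consistency check, requires the $\theta_b$-component to be imaginary. Converting back to the frame $\theta_b,\alpha_b,\beta_b$ yields the stated formulas for $\omega_1{}^1$ and $A^1{}_{\bar{1}}$.

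Finally, for the Webster curvature I would use that in dimension three ($n=1$) the quadratic term $\omega_1{}^\gamma\wedge\omega_\gamma{}^1=\omega_1{}^1\wedge\omega_1{}^1$ vanishes, so $\Omega_1{}^1=d\omega_1{}^1$. Differentiating the connection form just found (again through the structure equations, as its coefficients are constant) and collecting the result in the same $2$-form basis, I would extract the coefficient of $\theta^1_{abc}\wedge\theta^{\bar{1}}_{abc}$; by the contracted decomposition \eqref{eq:curvature_Tanaka_connection} this coefficient is exactly $R_{1\bar{1}}=R_1{}^1=W^{abc}$, since the remaining curvature terms there involve only $\theta^\rho\wedge\theta$ and $\theta^{\bar{\rho}}\wedge\theta$. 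I expect the main obstacle to be purely computational: keeping the normalizing factors $\sqrt{2c(a^2+1)}$ and the denominators $a\pm i$ under control so that they cancel and leave the clean polynomial expressions in $a,b,c$ and the $c^i_{jk}$ recorded in the proposition.
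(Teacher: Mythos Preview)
Your proposal is correct and follows essentially the same route as the paper. The paper organizes the computation by first writing the ansatz $\omega_1{}^1=i c_\theta\,\theta_b+i c_Z\,\theta^1+i\bar c_Z\,\theta^{\bar 1}$ and then evaluating $d\theta^1$ on the frame pairs $(Z_1,Z_{\bar 1})$, $(T,Z_1)$, $(T,Z_{\bar 1})$ to extract $c_Z$, $c_\theta$, and $A^1{}_{\bar 1}$, which is the dual formulation of your step of expanding $d\theta^1_{abc}$ in the $2$-form basis and matching coefficients; for the curvature the paper derives once and for all the identity $W=-c_\theta-2|c_Z|^2-2\im(Z_1(\bar c_Z))$ (the last term vanishing here since the coefficients are constant), whereas you propose to compute $d\omega_1{}^1$ directly---but these yield the same linear-algebraic computation.
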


\begin{proof}
See the computations in the appendix.
Alternatively, these computations are essentially also contained in \cite{pr}.
Note that Perrone uses a {\bf $J$-basis}, that is $e_1=T,e_2,e_3=Je_2$.
\end{proof}

We can now reduce the torsion flow for homogeneous CR-manifolds to an ODE by
plugging in the results of Proposition~\ref%
{proposition:Tanaka_connection_homogeneous_contact} into Proposition~\ref%
{proposition:torsion_flow_simple_PDE}. The general system is fairly
complicated, so we will work out some interesting case in Section~\ref%
{sec:compact_homogeneous_CR}.

We shall also consider the \textbf{normalized torsion flow} which, in
general, is given by the system 
\begin{equation}
\left\{ 
\begin{array}{l}
\partial _{t}J_{(t)}=2A_{J_{(t)},\theta _{(t)}}, \\ 
\partial _{t}\theta _{(t)}=-2(W-\bar{W})\theta _{(t)},%
\end{array}%
\right.   \label{eq:normalized_torsion_flow}
\end{equation}%
where $\bar{W}=\int_{M}W\theta {\wedge }d\theta /\int_{M}\theta {\wedge }%
d\theta $. Since the Webster curvature is constant in space for a
homogeneous CR-manifold, then second equation of the normalized is flow is
trivial. Inserting the result of Proposition~\ref%
{proposition:Tanaka_connection_homogeneous_contact} into the explicit system
provided by Proposition~\ref{proposition:torsion_flow_simple_PDE} gives the
following.

\begin{proposition}[Normalized torsion flow for homogeneous CR-manifolds]
Let $(M,\{ \omega^i \}_i,\theta)$ be a homogeneous contact manifold with $%
\theta=\omega^1$. Set $b=1$, and let $a_t,c_t$ be real valued functions Then
for a complex structure $J_{a_tbc_t}$ as defined in %
\eqref{eq:J_homogeneous_contact}, the normalized torsion flow satisfies the
ODE 
\begin{equation}  \label{eq:normalized_torsion_flow_homogeneous}
\begin{split}
\dot a_t & =c^2_{13}a_tc_t-c^3_{12}\frac{a_t^2+1}{c_t}a_t \\
\dot c_t & =c^2_{13}c_t^2+c^3_{12}(1-a_t^2).
\end{split}%
\end{equation}
\end{proposition}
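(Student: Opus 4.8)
The plan is to reduce the normalized torsion flow to the explicit ODE system by inserting the torsion data from Proposition~\ref{proposition:Tanaka_connection_homogeneous_contact} into the simplified $J$-part system from Proposition~\ref{proposition:torsion_flow_simple_PDE}, specialized to the normalized frame provided by Lemma~\ref{lemma:nice_basis}. First I would record the standing normalizations: since we use the frame of Lemma~\ref{lemma:nice_basis} with $\theta=\omega^1$, we have $c^1_{23}=1$ and $c^1_{12}=c^1_{13}=c^2_{12}=c^3_{13}=0$, and we set $b=1$. The second (contact form) equation of the torsion flow is driven by $W-\bar W$; because the Webster curvature $W^{abc}$ in Proposition~\ref{proposition:Tanaka_connection_homogeneous_contact} is spatially constant, we have $W=\bar W$, so the normalized contact-form equation is trivial and $b$ may indeed be held equal to $1$ consistently along the flow. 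This leaves only the $J$-part, governed by \eqref{eq:system_J-part_torsion_flow_simple}.

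Next I would extract $\re(A^{abc}_{11})$ and $\im(A^{abc}_{11})$ from the torsion formula. Proposition~\ref{proposition:Tanaka_connection_homogeneous_contact} gives ${A_{\phantom{1}\bar 1}^{1}}^{abc}$, from which $A^{abc}_{11}$ (equivalently $A_{1\bar 1}$ in the orthonormal convention with $h_{1\bar 1}=1$) is obtained; with $b=1$ the prefactor $1/b^2$ drops out. I would then separate real and imaginary parts of the expression
$$
i\frac{a^2+1}{2c}c^3_{12}-i\frac{c(a+i)}{2(a-i)}c^2_{13}.
$$
The term $\frac{a+i}{a-i}=\frac{(a+i)^2}{a^2+1}=\frac{a^2-1+2ai}{a^2+1}$ is the only genuinely nontrivial simplification; after multiplying through by $i$ and collecting, one reads off $\re(A_{11})$ and $\im(A_{11})$ as rational functions of $a,c$ and the two surviving structure constants $c^3_{12}$ and $c^2_{13}$.

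Finally I would substitute these into \eqref{eq:system_J-part_torsion_flow_simple} and simplify. The combinations $\re(A_{11})+a\im(A_{11})$ and $\re(A_{11})(-2ac/(a^2+1))+\im(A_{11})(1-a^2)c/(a^2+1)$ should collapse, after cancellation, to the stated right-hand sides $c^2_{13}a_tc_t-c^3_{12}\frac{a_t^2+1}{c_t}a_t$ and $c^2_{13}c_t^2+c^3_{12}(1-a_t^2)$.

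The main obstacle is purely the bookkeeping in this last algebraic reduction: tracking the factors of $a^2+1$ and $c$ through the two linear combinations and verifying that the cross-terms cancel cleanly. There is no conceptual difficulty—short-time existence for homogeneous data is guaranteed because the system has reduced to an autonomous ODE in $(a_t,c_t)$ with smooth (rational, with $c>0$) right-hand side—so the entire content is confirming that the coefficient arithmetic produces exactly \eqref{eq:normalized_torsion_flow_homogeneous}. I would carry out this computation in the orthonormal complex frame \eqref{eq:cpx_frame}, where $h_{1\bar 1}=1$ makes raising and lowering indices transparent, and defer the bulk of the structure-equation computation to the appendix, as the proof of Proposition~\ref{proposition:Tanaka_connection_homogeneous_contact} already does.
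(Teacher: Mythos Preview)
Your proposal is correct and follows exactly the approach the paper takes: the paper simply states that one inserts the torsion formula from Proposition~\ref{proposition:Tanaka_connection_homogeneous_contact} into the reduced system of Proposition~\ref{proposition:torsion_flow_simple_PDE}, after observing that the spatial constancy of $W^{abc}$ trivializes the normalized contact-form equation. One caution on bookkeeping: the quantity given in Proposition~\ref{proposition:Tanaka_connection_homogeneous_contact} is $A^{1}{}_{\bar 1}$, and the coefficient appearing in \eqref{eq:system_J-part_torsion_flow_simple} is $A_{11}=\overline{A^{1}{}_{\bar 1}}$ (not $A_{1\bar 1}$), so your parenthetical identification is off and a complex conjugation is needed before separating real and imaginary parts; otherwise the $c^3_{12}$-terms acquire the wrong sign.
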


\subsection{Homogeneous CR-manifolds and vanishing torsion}

Observe that homogeneous CR-manifolds need not be compact. In Section~\ref%
{sec:compact_homogeneous_CR} we give some compact examples, but we start out
by characterizing homogeneous CR-manifolds with vanishing torsion directly
in terms of the structure coefficients and coefficients for the CR-structure 
$a,b,c$.

\begin{proposition}
\label{prop:existence_torsion_free_connection} Let $(M,\{\omega
^{i}\}_{i},\theta )$ be a homogeneous contact manifold with $\theta =\omega
^{1}$ and basis provided by Lemma~\ref{lemma:nice_basis}. For $a\in \R,b=1$ and $c>0$, a complex structure $J_{abc}$ as
defined in \eqref{eq:J_homogeneous_contact} has vanishing torsion precisely
when one of the following conditions is satisfied.

\begin{itemize}
\item[1] if $a=0$, $c^2_{12}=0$, and $c=\sqrt{-\frac{c^3_{12}}{c^2_{13}}}>0$
(not imaginary).

\item[2] if $a\neq 0$, $c^2_{13}=c^2_{12}=c^3_{12}=0$.
\end{itemize}
\end{proposition}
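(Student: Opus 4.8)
The plan is to use the explicit torsion formula from Proposition~\ref{proposition:Tanaka_connection_homogeneous_contact} and simply set it to zero. With $b=1$, the torsion coefficient is
\[
{A_{\phantom{1}\bar 1}^{1}}^{abc}=i\frac{a^{2}+1}{2c}c_{12}^{3}-i\frac{c(a+i)}{2(a-i)}c_{13}^{2},
\]
so the whole content is to analyze when this complex number vanishes. First I would clear the denominator $2c(a-i)$, which is nonzero since $c>0$, and multiply through by $-2i$; the condition $A^{1}_{\phantom{1}\bar 1}=0$ becomes
\[
(a^{2}+1)(a-i)\,c_{12}^{3}-c^{2}(a+i)\,c_{13}^{2}=0.
\]
Since $a^{2}+1=(a-i)(a+i)$, I can factor the left-hand side. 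Pulling out $(a-i)$ from the first term and comparing, I would rewrite the equation so that its real and imaginary parts (as a polynomial in $a$ with real coefficients, after separating the explicit $\pm i$) can be read off separately. Setting both the real and imaginary parts to zero gives two real equations in the real unknowns $a,c$ and the structure constants.

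The key computational step is to expand $(a^{2}+1)(a-i)c_{12}^{3}-c^{2}(a+i)c_{13}^{2}$ and split into real and imaginary parts. Writing $c_{12}^{3},c_{13}^{2}$ as the real constants they are, the real part is $a(a^{2}+1)c_{12}^{3}-a c^{2}c_{13}^{2}$ and the imaginary part is $-(a^{2}+1)c_{12}^{3}-c^{2}c_{13}^{2}$. The imaginary part vanishing forces
\[
(a^{2}+1)c_{12}^{3}+c^{2}c_{13}^{2}=0,
\]
and the real part vanishing forces
\[
a\bigl[(a^{2}+1)c_{12}^{3}-c^{2}c_{13}^{2}\bigr]=0.
\]
Here I note that Lemma~\ref{lemma:nice_basis} has already normalized the basis so that $c_{13}^{2}$ and $c_{12}^{3}$ are the relevant surviving structure constants; I would double-check against the proposition statement that no other structure constant enters the torsion, which is indeed the case.

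Now I carry out the case split that produces the two listed alternatives. If $a=0$, the real equation is automatic, and the imaginary equation reduces to $c_{12}^{3}+c^{2}c_{13}^{2}=0$, i.e.\ $c=\sqrt{-c_{12}^{3}/c_{13}^{2}}$; for this to give a genuine positive real $c$ we need $c_{12}^{3}$ and $c_{13}^{2}$ of opposite sign, matching alternative~1 (the hypothesis $c_{12}^{2}=0$ I would confirm is forced by the normalization of Lemma~\ref{lemma:nice_basis} together with the Reeb/contact conditions, so it appears here as a standing assumption rather than something derived from $A=0$). If $a\neq 0$, the real equation gives $(a^{2}+1)c_{12}^{3}=c^{2}c_{13}^{2}$; adding this to the imaginary equation $(a^{2}+1)c_{12}^{3}+c^{2}c_{13}^{2}=0$ yields $2(a^{2}+1)c_{12}^{3}=0$, hence $c_{12}^{3}=0$, and then $c^{2}c_{13}^{2}=0$ forces $c_{13}^{2}=0$; together with the standing normalization this is exactly alternative~2's conditions $c_{13}^{2}=c_{12}^{2}=c_{12}^{3}=0$. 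The main obstacle is not any single hard step but rather bookkeeping: making sure the structure-constant conditions stated in the proposition (especially the $c_{12}^{2}=0$ clauses) are correctly accounted for as consequences of the Lemma~\ref{lemma:nice_basis} normalization versus consequences of vanishing torsion, and verifying that the square-root expression for $c$ is real and positive exactly under the sign condition claimed.
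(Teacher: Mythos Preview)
Your argument is correct and is exactly the intended one: the paper does not supply a separate proof of this proposition, since it follows directly by setting the explicit torsion formula of Proposition~\ref{proposition:Tanaka_connection_homogeneous_contact} equal to zero and solving. Your splitting into real and imaginary parts and the subsequent case analysis on $a=0$ versus $a\neq 0$ is clean and matches what one obtains from that formula; you are also right that $c_{12}^{2}=0$ is part of the normalization in Lemma~\ref{lemma:nice_basis} rather than a consequence of $A=0$, so its appearance in the statement is redundant.
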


\begin{proposition}[Convergence of the torsion flow]
\label{prop:convergence_torsion_flow}
Let $(M,\{\omega
^{i}\}_{i},\theta )$ be a homogeneous contact manifold with $\theta =\omega
^{1}$ and a basis provided by Lemma~\ref{lemma:nice_basis}.
Take $a\in \R,b=1$ and $c>0$, and let $J_{abc}$ be a complex structure as
defined in \eqref{eq:J_homogeneous_contact}.
\begin{enumerate}
\item[A] If $c^3_{12}>0$ and $c^2_{13}<0$, then the normalized torsion flow converges to the unique torsion free complex structure of Proposition~\ref{prop:existence_torsion_free_connection}
\item[R] If $c^3_{12}<0$ and $c^2_{13}>0$, then the normalized torsion flow blows up in finite time and the complex structure does not converge unless $a_0=0$ and $c_0=\sqrt{-\frac{c^3_{12}}{c^2_{13}} }$. In the latter case, the torsion vanishes and the torsion flow is constant.
\end{enumerate}

\end{proposition}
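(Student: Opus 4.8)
The plan is to treat the normalized flow \eqref{eq:normalized_torsion_flow_homogeneous} as a planar autonomous ODE for $(a,c)$ on the half-plane $\{c>0\}$, abbreviating $p=c^3_{12}$ and $q=c^2_{13}$. First I would record the elementary structure: the system is invariant under $a\mapsto -a$ (so I may assume $a\ge 0$), the line $\{a=0\}$ is invariant because $\dot a$ carries a factor of $a$, and setting $\dot a=\dot c=0$ shows that the only equilibrium with $c>0$ is $(0,c_*)$ with $c_*=\sqrt{-p/q}$, a genuine positive number precisely because $pq<0$ in both cases. Since $c^2_{12}=0$ by Lemma~\ref{lemma:nice_basis}, Proposition~\ref{prop:existence_torsion_free_connection}(1) identifies $(0,c_*)$ with the unique torsion-free homogeneous complex structure. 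Linearizing at $(0,c_*)$ gives a diagonal Jacobian with eigenvalues $-2p/c_*$ and $2qc_*$, so the equilibrium is a stable node in case A and an unstable node in case R, matching both claims locally.

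The key device is the quantity $w:=1+a^2$, for which a direct computation yields
\[
\dot w=2(w-1)\Big(qc-\frac{pw}{c}\Big)=\frac{2(w-1)}{c}\,(qc^2-pw).
\]
In case A ($p>0,\,q<0$) the factor $qc^2-pw$ is strictly negative, so $w$ is nonincreasing and strictly decreasing off $\{a=0\}$; in case R ($p<0,\,q>0$) it is strictly positive, so $w$ strictly increases off $\{a=0\}$. I would pair $w$ with the auxiliary quantity $s:=(a^2+1)/c$, whose evolution works out to $\dot s=q(a^2-1)-ps^2$.

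For case A I would first confine the orbit to a compact subset of $\{c>0\}$. Since $\dot c=qc^2+p(1-a^2)\le qc^2+p$, we get $\dot c<0$ whenever $c>c_*$, hence $c\le \max(c_0,c_*)$; and since $\dot s=(qa^2+|q|)-ps^2\le |q|-ps^2$, we get $\dot s<0$ whenever $s>1/c_*$, hence $s\le \max(s_0,1/c_*)$, which together with $a^2+1\ge 1$ bounds $c$ away from $0$. Thus the orbit is precompact in $\{c>0\}$ and exists for all $t\ge 0$. Inserting $c\le c_{\max}$ into the displayed formula gives $qc-pw/c\le -p/c_{\max}=:-\delta<0$, so $\dot w\le -2\delta(w-1)$ and therefore $w\to 1$, i.e.\ $a\to 0$ exponentially. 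Finally LaSalle's invariance principle with Lyapunov function $w$ forces the $\omega$-limit set into the largest invariant subset of $\{a=0\}$; on that line the reduced flow $\dot c=q(c^2-c_*^2)$ has $c_*$ as its only complete bounded orbit, so $(a,c)\to(0,c_*)$, which is A. For case R I would instead prove finite-time blow-up off the equilibrium. On $\{a=0\}$ the reduced equation $\dot c=q(c^2-c_*^2)$ drives $c\to\infty$ (if $c_0>c_*$) or $c\to 0$ (if $c_0<c_*$) in finite time, and in each case an entry of $J_{abc}$ diverges. For $a_0\neq 0$, $w$ strictly increases: if the orbit ever reaches $|a|\ge 1$ then $w\ge 2$ and $\dot s\ge |p|s^2$ blows $s$ up in finite time; otherwise $|a|<1$ always and $w\nearrow w_\infty>1$, so $\dot w\to 0$ forces $qc-pw/c\to 0$, impossible unless $c\to\infty$, and $\dot c\ge qc^2-|p|$ shows this too occurs in finite time. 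Either way $J$ cannot converge.

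The step I expect to be the main obstacle is establishing precompactness in case A, i.e.\ ruling out a finite-time collapse $c\to 0$ (which would degenerate $J_{abc}$) before $a$ has reached $0$; this is exactly what the upper bound on $s=(a^2+1)/c$ controls, and recognizing that $w=1+a^2$ and $s$ are the right monotone/bounded quantities is the heart of the argument. Once they are in hand, the dichotomy in case R follows from the same sign analysis read with the opposite orientation, reflecting the fact that case R is the time-reversal of case A under $(p,q)\mapsto(-p,-q)$.
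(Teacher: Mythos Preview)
Your approach is substantially different from the paper's, and in case~A considerably more rigorous. The paper argues case~A essentially from a phase portrait: it asserts that $|a|$ is decreasing with only possible limit $0$ and that $c$ then converges, without addressing why $c$ stays bounded away from $0$. You instead introduce the two monotone quantities $w=1+a^{2}$ and $s=(1+a^{2})/c$; the bound on $s$ is exactly what rules out $c\to 0$, and your exponential estimate $\dot w\le -2\delta(w-1)$ then forces $a\to 0$ cleanly. For case~R the paper organizes the argument around the half-ellipse $\{\dot c\le 0\}$ (orbits outside have $c$ increasing to blow-up; orbits inside must exit in finite time), whereas you split on whether $|a|$ ever reaches $1$, using $\dot s\ge|p|s^{2}$ in one branch and a contradiction from the boundedness of $w$ in the other. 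Both are phase-plane arguments, but yours is more quantitative and self-contained.

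There is, however, a genuine gap in your second branch of case~R. From $w\nearrow w_{\infty}\le 2$ you infer $\dot w\to 0$; this does not follow, since a bounded monotone $C^{1}$ function need not have derivative tending to zero. Moreover, the phrase ``impossible unless $c\to\infty$'' is misleading: in case~R one has $qc-\frac{pw}{c}=qc+\frac{|p|w}{c}$, and by the arithmetic--geometric mean inequality this is bounded below by $2\sqrt{q|p|w}\ge 2\sqrt{q|p|}>0$ \emph{uniformly}, so it can never tend to $0$ regardless of what $c$ does. The fix is immediate and in fact simpler than what you wrote: since $w-1\ge w_{0}-1>0$ and $qc+\frac{|p|w}{c}\ge 2\sqrt{q|p|}$, one gets $\dot w\ge 4(w_{0}-1)\sqrt{q|p|}=:K>0$, which contradicts $w<2$ on an infinite time interval. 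Hence the maximal existence time $T$ is finite, and as $t\to T^{-}$ the boundedness of $a$ forces $c\to 0$ or $c\to\infty$, so an entry of $J_{abc}$ diverges.
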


\begin{proof}
In both cases, the ODE describing the normalized torsion flow has a unique fixed point $(0,\sqrt{-\frac{c^3_{12}}{c^2_{13}} })$ in the upper half-plane with coordinates $(a,c)$.

The phase diagram for the repelling (R) case is given in Figure~\ref{fig:phasediagram}
The phase diagram for the attracting (A) case is similar, but the arrows are reversed.
\begin{figure}[htp]
\def\svgwidth{0.5\textwidth}%
\begingroup\endlinechar=-1
\resizebox{0.5\textwidth}{!}{%
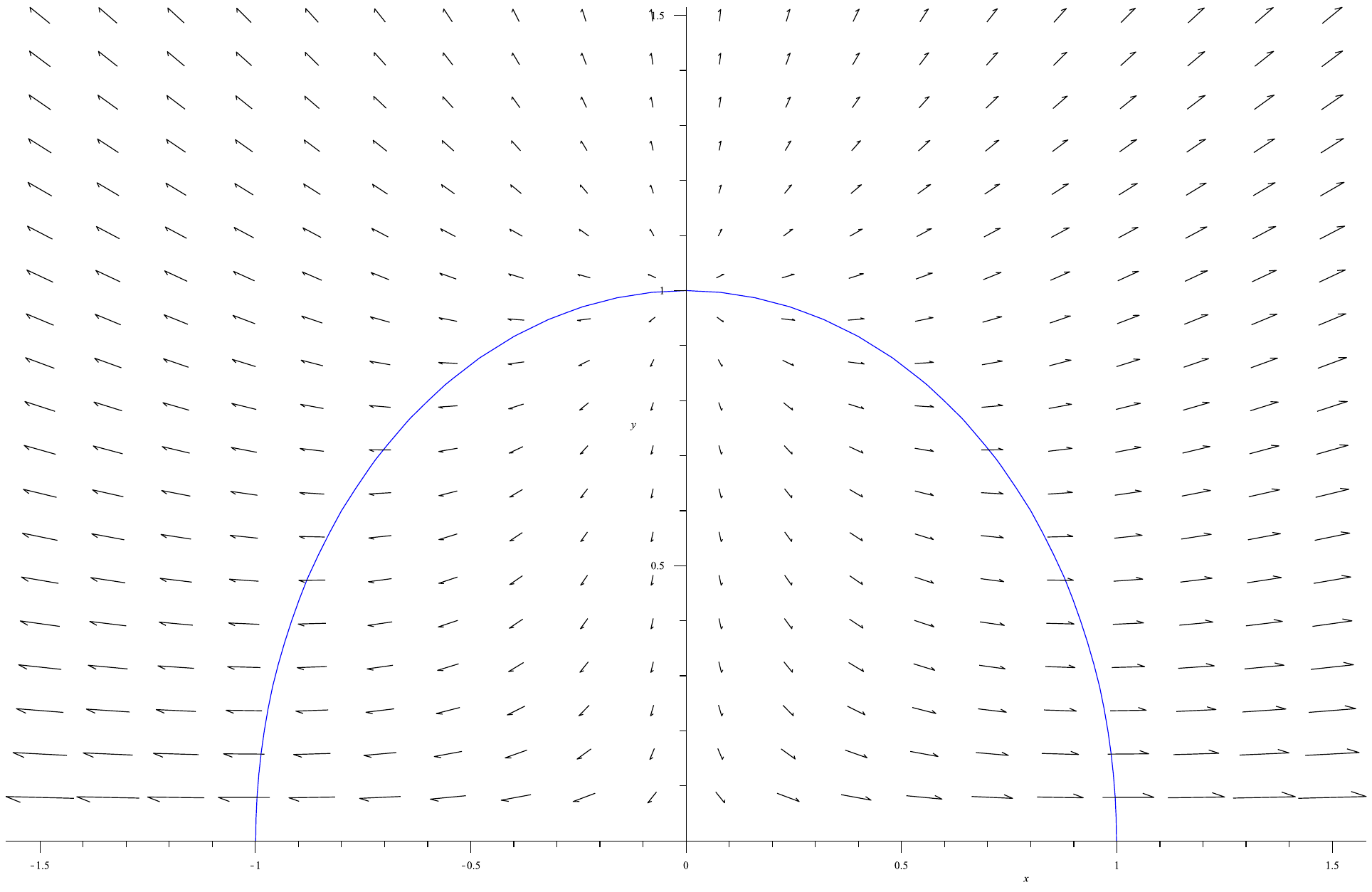%
}\endgroup
\caption{Phase diagram for the repelling case}
\label{fig:phasediagram}
\end{figure}
It is clear that the attracting case converges to $(0,\sqrt{-\frac{c^3_{12}}{c^2_{13}} })$. Indeed, the function $|a|$ is decreasing, and the only limit can be $a_\infty=0$. From the second equation we then deduce that the function $c$ converges to $c_\infty=\sqrt{-\frac{c^3_{12}}{c^2_{13}} }$.

To see that solutions in the repelling case blow up unless $(a_0,c_0)=(0,\sqrt{-\frac{c^3_{12}}{c^2_{13}} })$, we make two observations.
\begin{itemize}
\item any initial condition $(a_0,c_0)$ that starts in the set given by 
$$
|c^2_{13}|c^2_0+|c^3_{12}|a^2_0-|c^3_{12}|>0
$$
blows up in finite time. 
Indeed, the $c$-coordinate is strictly increasing in that case, so there is $t_0$ such that $c_{t_0}>1$ or the solution blows up before $t_0$.
If $c_{t_0}>1$ then the solution blows up in finite by the following argument.
We have
$$
\dot c=|c^3_{12}| c^2+|c^2_{13}|(a^2-1)\geq |c^3_{12}| c^2-|c^2_{13}|,
$$ 
which must blow up in finite time.

\item an initial condition $(a_0,c_0)$ with 
$$
|c^2_{13}|c^2_0+|c^3_{12}|a^2_0-|c^3_{12}|\leq 0 \text{ and } (a_0,c_0)\neq (0,\sqrt{-\frac{c^3_{12}}{c^2_{13}} })
$$
with $c_0>0$ exits the half-disk $\{ |c^2_{13}|c^2_0+|c^3_{12}|a^2_0-|c^3_{12}|\leq 0 \}$ in some finite time $t_1$.
Then either $c_{t_1}=0$, meaning that $a$ blows up in finite time, or we can reduce to the first case.
\end{itemize}

An alternative proof can be given by starting with a $J$-basis. Then $a_0=0$ and $c_0=1$. Such a basis is not preserved under the torsion flow, but the condition $a_t=0$ holds.
The resulting ODE is simpler to analyze.
This method is applied in Section~\ref{sec:compact_homogeneous_CR}.
\end{proof}

\begin{remark}
By Proposition~\ref{proposition:Tanaka_connection_homogeneous_contact} there is a relation between the Webster curvature and the sign of $c^3_{12}$.
However, it is \emph{not} true that repelling is equivalent to negative curvature.
\end{remark}

\subsubsection{Homogeneous CR structures on unimodular Lie groups}
We recall the definitions involved.
\begin{definition}
A Lie group $G$ is called {\bf unimodular} if the left-invariant Haar measure is also right invariant.
A Lie algebra $(\mathfrak{g},[\cdot,\cdot] )$ is called {\bf unimodular} if $Tr \ad_X=0$ for all $X\in \mathfrak g$.
\end{definition}
Note that the Lie algebra of a unimodular Lie group is unimodular.
The unimodularity of a Lie group can be used to simplify the structure coefficients of the Lie algebra.
\begin{lemma}
\label{lemma:unimodular_basis}
Let $G$ be a unimodular, $3$-dimensional Lie group admitting a homogeneous contact structure $\xi=\ker \theta$.
Then there is a basis $\{ e_i \}_i$ of the Lie algebra $\mathfrak{g}$ such that
\begin{itemize}
\item $e_1$ is the Reeb vector field, and $e_2,e_3$ lie in the contact structure $\xi$.
\item There are $\lambda,\mu\in \R$ such that $[e_1,e_2]=\lambda e_3$, $[e_1,e_3]=\mu e_2$. Furthermore, $[e_2,e_3]=e_1$.
\end{itemize}
\end{lemma}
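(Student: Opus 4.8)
The plan is to build the desired basis on top of the one furnished by Lemma~\ref{lemma:nice_basis} and then to use unimodularity to pin down the single remaining bracket $[e_2,e_3]=e_1$. First I would observe that a Lie group $G$ with a left-invariant contact form $\theta$ is a homogeneous contact manifold in the sense of this section: the left-invariant $1$-forms give a coframe with constant structure coefficients, and since $\theta$ is left-invariant its Reeb field $T$ is left-invariant too, so $T\in\mathfrak g$. Choosing a left-invariant compatible complex structure $J$ (translate one from the identity), the operator $h=\tfrac12\mathcal L_T J$ used in the proof of Lemma~\ref{lemma:nice_basis} is itself left-invariant, hence its eigenvectors can be taken left-invariant. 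Thus the basis produced by Lemma~\ref{lemma:nice_basis} lies in $\mathfrak g$, and I obtain $e_1=T$, with $e_2,e_3\in\xi$, satisfying $[e_1,e_2]=\lambda e_3$ and $[e_1,e_3]=\mu e_2$ for some $\lambda,\mu\in\R$, while the normalization $c^1_{23}=1$ and the bracket convention $[X_j,X_k]=-\sum_i c^i_{jk}X_i$ give
\[
[e_2,e_3]=-\bigl(e_1+c^2_{23}\,e_2+c^3_{23}\,e_3\bigr).
\]

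The second step is to invoke unimodularity to kill the two unwanted coefficients $c^2_{23}$ and $c^3_{23}$. Writing out the matrices of $\ad_{e_2}$ and $\ad_{e_3}$ in the basis $\{e_1,e_2,e_3\}$ directly from the bracket relations above, a short computation of their traces yields $\operatorname{Tr}\ad_{e_2}=-c^3_{23}$ and $\operatorname{Tr}\ad_{e_3}=c^2_{23}$ (while $\operatorname{Tr}\ad_{e_1}=0$ automatically). Since $\operatorname{Tr}\ad_X=0$ for all $X\in\mathfrak g$ reduces by linearity to the three basis vectors, unimodularity forces $c^2_{23}=c^3_{23}=0$. Hence $[e_2,e_3]=-e_1$, while the brackets $[e_1,e_2]=\lambda e_3$ and $[e_1,e_3]=\mu e_2$ are unaffected.

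Finally I would normalize the sign. Because $e_1=T$ is the uniquely determined Reeb field it cannot be rescaled, but replacing $e_3$ by $-e_3$ turns $[e_2,e_3]=-e_1$ into $[e_2,e_3]=e_1$, keeps $e_2,e_3$ inside $\xi$, and only changes $\lambda,\mu$ to $-\lambda,-\mu$, which remain arbitrary reals. This produces exactly the asserted relations. The only genuinely delicate point is the very first one: one must ensure that the adapted basis of Lemma~\ref{lemma:nice_basis} can be taken \emph{left-invariant}, i.e.\ inside $\mathfrak g$, which is why I insist on a left-invariant $J$. Everything after that is a routine trace computation and a sign flip; conceptually, the argument is Milnor's diagonalization of the bracket of a $3$-dimensional unimodular Lie algebra, specialized so that the distinguished eigendirection is the Reeb line $\R T$.
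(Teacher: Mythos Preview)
Your argument is correct and follows essentially the same route as the paper: apply Lemma~\ref{lemma:nice_basis}, then use unimodularity (equivalently, $\sum_j c^j_{ij}=0$) to force $c^2_{23}=c^3_{23}=0$. You are in fact slightly more careful than the paper in two places---you justify why the basis of Lemma~\ref{lemma:nice_basis} can be taken left-invariant, and you explicitly flip the sign of $e_3$ to turn $[e_2,e_3]=-e_1$ into $[e_2,e_3]=e_1$, a step the paper's proof omits.
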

\begin{proof}
First apply Lemma~\ref{lemma:nice_basis}.
Note that unimodularity implies that $\sum_j c^j_{ij}=0$ for all $i$. We conclude that $c^1_{i1}+c^2_{i2}+c^3_{i3}=0$.
By putting $i=2,3$ we deduce that $c^2_{32}=c^3_{23}=0$, so
$$
[e_2,e_3]=-c^1_{23}e_1=-e_1.
$$
\end{proof}
Here is a table with all possible unimodular Lie groups admitting a homogeneous contact structure,

\begin{tabular}{llll}
$c_{31}^{2}=-c_{13}^{2}$ & $c_{12}^{3}$ & Geometry & $\exists J$ with $%
A_{J,\theta }=0$ \\ \hline
+ & + & $SU(2)=S^{3}$  & yes \\ 
- & - & $\widetilde{SL(2,\R )}$  & yes \\ 
- & + & $\widetilde{SL(2,\R )}$  & no \\ 
0 & + & $E(2)$  & no \\ 
0 & - & $E(1,1)$  & no \\ 
0 & 0 & Heisenberg  & yes 
\end{tabular}

\begin{remark}
\label{rem:non-isomorphic_contact_structure} We point out that the topology
or geometry of the underlying CR-manifold does \emph{not} uniquely determine
the underlying contact structure. In particular, for some compact quotients of $\widetilde{SL(2,\R )}$ the above contact structures are not isomorphic.
Explicit examples are given in Sections~\ref{sec:pdq_str} and \ref{sec:BW_structure}.
\end{remark}

\begin{theorem}[Convergence to torsion free CR structure]
\label{thm:convergence_to_torsion_free_CR}
Let $(M,\{ \omega^i \}_i,\theta=\omega^1)$ be a homogeneous contact manifold whose Lie algebra is isomorphic to $su(2)$.
Then there is a unique homogeneous complex structure $J_{a_\infty,b=1,c_\infty}$ that is torsion free.
Moreover, for any choice of homogeneous complex structure $J_{a,b=1,c}$, the normalized torsion flow converges to this unique CR-structure $(\ker \theta, J_{a_\infty,b=1,c_\infty})$.

In particular, for any choice of homogeneous complex structure on $SU(2)$, the normalized torsion flow converges to the standard CR-structure.
\end{theorem}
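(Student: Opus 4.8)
The plan is to deduce the theorem from the two results already established for homogeneous CR-manifolds: Proposition~\ref{prop:existence_torsion_free_connection}, which gives existence and uniqueness of a torsion-free homogeneous complex structure, and Proposition~\ref{prop:convergence_torsion_flow}, which gives convergence of the normalized torsion flow. The only genuinely new input is to locate the Lie algebra $su(2)$ inside the sign classification of the structure constants $c^3_{12}$ and $c^2_{13}$ appearing in those propositions, and to check that it lands in the attracting case (A) rather than the repelling case (R).

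First I would normalize the coframe. Since $SU(2)$ is compact, $su(2)$ is unimodular, so I apply Lemma~\ref{lemma:unimodular_basis} (a refinement of Lemma~\ref{lemma:nice_basis}) to obtain a basis in which $e_1$ is the Reeb field, $c^1_{23}=1$, and $c^1_{12}=c^1_{13}=c^2_{12}=c^3_{13}=0$, so that the only off-Reeb structure constants that can survive are $c^3_{12}$ and $c^2_{13}$; the brackets then read $[e_1,e_2]=-c^3_{12}e_3$, $[e_1,e_3]=-c^2_{13}e_2$, $[e_2,e_3]=-e_1$. Next I identify the sign pattern: among unimodular $3$-dimensional Lie algebras, $su(2)$ is characterized by having all three Milnor structure constants of the same sign, equivalently a negative definite Killing form. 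Matching the standard brackets of $su(2)$ to the normal form above, with the orientation pinned down by $c^1_{23}=1$, forces $c^3_{12}>0$ and $c^2_{13}<0$; this is exactly the $SU(2)=S^3$ entry of the table. With $su(2)$ thus placed in the regime $c^3_{12}>0,\ c^2_{13}<0$, I invoke Proposition~\ref{prop:existence_torsion_free_connection}: since $c^3_{12}\neq 0$, alternative (2) is excluded, so a torsion-free $J_{a,b=1,c}$ must satisfy (1), that is $a_\infty=0$ and $c_\infty=\sqrt{-c^3_{12}/c^2_{13}}$, which is real and positive precisely because the two constants have opposite sign; the requirement $c^2_{12}=0$ in (1) is automatic from the chosen basis. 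This yields the unique torsion-free homogeneous complex structure, and it coincides with the fixed point of the normalized-flow ODE~\eqref{eq:normalized_torsion_flow_homogeneous}. Convergence for an arbitrary initial $J_{a,b=1,c}$ then follows directly from case (A) of Proposition~\ref{prop:convergence_torsion_flow}, whose hypotheses we have just verified.

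For the final assertion, I note that on $SU(2)=S^3$ the standard CR structure induced from $\C^2$ is Sasakian, hence $K$-contact, hence has vanishing pseudohermitian torsion by the discussion in the introduction; by the uniqueness established above it must equal $(\ker\theta, J_{a_\infty,b=1,c_\infty})$, so the flow converges to it. I expect the main obstacle to be the bookkeeping in the sign identification: confirming the precise signs $c^3_{12}>0,\ c^2_{13}<0$ for $su(2)$ and ruling out that any admissible change of basis could push us into case (R) requires care with the convention $[X_j,X_k]=-\sum_i c^i_{jk}X_i$ and with the orientation normalization $c^1_{23}=1$. Everything beyond this is a direct application of the two cited propositions.
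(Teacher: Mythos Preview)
Your proposal is correct and follows essentially the same route the paper intends: the theorem is stated immediately after the table of unimodular Lie groups and is meant to be read off from that table (which records $c^2_{31}>0$, i.e.\ $c^2_{13}<0$, and $c^3_{12}>0$ for $SU(2)$) together with Proposition~\ref{prop:existence_torsion_free_connection} and case~(A) of Proposition~\ref{prop:convergence_torsion_flow}. Your identification of the standard CR structure on $S^3$ with the torsion-free fixed point is also consistent with the paper's subsequent Rossi example, where $t=0$ gives $a=0$, $c=1=\sqrt{-c^3_{12}/c^2_{13}}$.
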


\begin{example}[Rossi's examples]
We recall Rossi's examples of non-embeddable CR-manifolds.
Define the strictly plurisubharmonic function 
\[
\begin{split}
f: \C^2 & \longrightarrow \R \\
z & \longmapsto \frac{1}{2}\Vert z \Vert^2.
\end{split}
\]
Let $S^3:=f^{-1}(\frac{1}{2})$, and put $\omega^1=-df \circ i$, $\omega^2=-df \circ j$, $\omega^3=-df \circ k$, where $i,j,k$ are the standard quaternions.
With $\theta=\omega^1$, this gives $S^3$ the structure of a homogeneous contact manifold. Its structure constants are $c^1_{23}=1, c^2_{13}=-1$ and $c^3_{12}=1$.
Recall that the standard CR structure on $S^3$ is then given by $(\theta,J=i)$.
Put $\theta^1:=\frac{1}{\sqrt 2}\left( \omega^2+i \omega^3 \right)$. Then $\{ \theta,\theta^1,\theta^{\bar 1} \}$ is an admissible frame.
Following \cite{ccy} we define the CR structure via the deformed coframe
$$
\theta^1_t=\frac{1}{\sqrt{1-t^2}}\left( \theta^1-t \theta^{\bar 1}\right) .
$$
Writing this out gives
$$
\theta^1_t=\frac{1}{2}\left(
\frac{\sqrt{1-t}}{\sqrt{1+t}}\alpha
+i\frac{\sqrt{1+t}}{\sqrt{1-t}}\beta
\right)
.
$$
Comparing this with Equation~\eqref{eq:cpx_coframe} shows that the examples of Rossi are homogeneous CR structures with
$$
J_{abc}=J_{a=0,b=1/\sqrt{2},c=\frac{1-t}{1+t}}
=
\left( 
\begin{array}{cc}
0 & -\frac{1+t}{1-t} \\ 
\frac{1-t}{1+t} & 0%
\end{array}%
\right) .
$$
For $t>0$, these CR-manifolds are \emph{not} embeddable.
\end{example}
On the other hand, Theorem~\ref{thm:convergence_to_torsion_free_CR} applies to Rossi's examples, so we have.
\begin{corollary}
Under the normalized torsion flow, Rossi's examples flow to the standard CR structure on $S^3$, which is embeddable.
\end{corollary}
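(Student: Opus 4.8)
The plan is to deduce this as an essentially immediate consequence of Theorem~\ref{thm:convergence_to_torsion_free_CR}, so the genuine work has already been carried out; what remains is to verify that Rossi's family satisfies the hypotheses and to identify the limiting structure. First I would record, using the preceding example, that Rossi's CR structures on $S^3$ are exactly the homogeneous CR structures $J_{a=0,\,b=1/\sqrt{2},\,c=(1-t)/(1+t)}$ with structure constants $c^1_{23}=1$, $c^2_{13}=-1$, $c^3_{12}=1$. Since these are the structure constants of $su(2)$ (cf.\ the classification table following Lemma~\ref{lemma:unimodular_basis}, the $SU(2)$ row), the hypothesis of Theorem~\ref{thm:convergence_to_torsion_free_CR} is satisfied, and the theorem directly yields that the normalized torsion flow converges to the standard CR structure.

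Next I would pin down the limit explicitly. By Proposition~\ref{prop:existence_torsion_free_connection}(1), applicable since $a_0=0$ and $c^2_{12}=0$ in the nice basis, the unique torsion-free homogeneous complex structure has $a_\infty=0$ and $c_\infty=\sqrt{-c^3_{12}/c^2_{13}}=1$, that is $J_{a=0,c=1}=i$. As a consistency check I would note that along Rossi's family $a_0=0$, and since the right-hand side of the first equation in \eqref{eq:normalized_torsion_flow_homogeneous} vanishes when $a=0$, the condition $a_t\equiv 0$ is preserved; the remaining equation then reduces to $\dot c = 1-c^2$, whose flow on $(0,\infty)$ converges monotonically to the fixed point $c=1$. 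Thus the limiting CR structure is $(\ker\theta,\,J=i)$, the CR structure induced on $S^3\subset\C^2$ as the boundary of the unit ball, which is embeddable by construction.

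The only point needing a brief remark, rather than a real obstacle, is the normalization in $b$: Rossi's coframe uses $b=1/\sqrt{2}$ while Theorem~\ref{thm:convergence_to_torsion_free_CR} is phrased with $b=1$. This is harmless because the homogeneous complex structure $J_{abc}$ on $\xi$ depends only on the pair $(a,c)$ and not on $b$, and because under the normalized flow the Webster curvature of a homogeneous CR-manifold is spatially constant, so $\partial_t\theta=-2(W-\bar W)\theta=0$ keeps $b$ fixed. Hence the induced flow on the CR data $(a_t,c_t)$ is identical for any fixed value of $b$, and the constant rescaling of the contact form it produces does not alter the underlying CR structure or its embeddability. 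Combining these observations, Rossi's examples, which are non-embeddable for $t>0$, flow under the normalized torsion flow to the embeddable standard CR structure on $S^3$, as claimed.
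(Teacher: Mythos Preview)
Your proposal is correct and follows the same approach as the paper: the corollary is stated immediately after noting that Theorem~\ref{thm:convergence_to_torsion_free_CR} applies to Rossi's examples, with no further argument given. Your verification of the hypotheses (the structure constants are those of $su(2)$), explicit identification of the limit via Proposition~\ref{prop:existence_torsion_free_connection}, and remark on the harmless $b$-normalization are all accurate and in fact supply more detail than the paper itself provides.
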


\subsection{Examples of compact homogeneous CR-manifolds: different CR structures on $ST^*\Sigma$}
\label{sec:compact_homogeneous_CR}
We describe the torsion flow on several geometries, namely $SU(2)$, $E(2)$, $\widetilde{SL(2,\R )}$ and Heisenberg geometry.

As an explicit, compact model covering the first three cases we consider a compact orientable surface Riemann surface $(\Sigma ,g)$. 
According to a standard theorem in
Riemannian geometry, the unit cotangent bundle $ST^{\ast }\Sigma $ admits a
canonical coframe $\omega ^{1},\omega ^{2},\omega ^{3}$ (see for instance~%
\cite{BaoChernShen}, Chapter~4.4 for the more general Finsler case with a
different ordering of the coframe) satisfying 
\begin{equation}
\begin{split}
d\omega ^{1}& =-\omega ^{2}{\wedge }\omega ^{3} \\
d\omega ^{2}& =-\omega ^{3}{\wedge }\omega ^{1} \\
d\omega ^{3}& =-K\omega ^{1}{\wedge }\omega ^{2},
\end{split}
\label{eq:coframe_unit_cotangent_bundle}
\end{equation}%
where $K$ is the Gauss curvature of $(\Sigma ,g)$. 
Assume that $g$ is a metric of constant Gauss curvature.
Then these manifolds provide models of homogeneous
contact manifolds.

\subsubsection{Homogeneous contact structure associated with the canonical contact structure ``$pdq$''}
\label{sec:pdq_str}
We consider the standard contact structure (``$pdq$'') on the unit
cotangent bundle of $(\Sigma,g)$. With respect to the canonical coframe %
\eqref{eq:coframe_unit_cotangent_bundle}, the defining form for this contact
structure is $\omega^1$.

Consider time-dependent functions $a_t,b_t,c_t$ that are constant in space,
and define the coframe 
\begin{equation*}
\theta_{b_t}={b_t}^2\omega^1,\quad \alpha_{b_t}={b_t}\omega^3,\quad
\beta_{b_t}={b_t}\omega^2.
\end{equation*}
With this ordering, we obtain the structure coefficients $c^1_{23}=1$, $%
c^2_{13}=-K$, and $c^3_{12}=1$ (take $b_0=1$), and all other coefficients
vanish. With the standard choice of complex structure $J_{abc}$, we obtain a
pseudohermitian manifold $(ST^*\Sigma,\theta_{b_t},J_{a_tb_tc_t})$. We
compute the torsion and Webster curvature with the formulas from Proposition~%
\ref{proposition:Tanaka_connection_homogeneous_contact}: 
\begin{equation}
\label{eq:torsion_webster_curvature_STSigma}
\begin{split}
{A^{1 \phantom 1}_{\phantom{1} \bar 1}}^{a_tb_tc_t} &=\frac{i}{b_t^2}\left( 
\frac{a_t^2+1}{2c_t}+\frac{c_t}{2}\frac{a_t+i}{a_t-i}K \right)
=
-\frac{a_tc_t}{a_t^2+1}K+i\left(
\frac{a_t^2+1}{2c_t}c^3_{12}+\frac{c_tK}{2}\frac{a_t^2-1}{a_t^2+1}
\right)
, \\
W^{a_tb_tc_t}&=\frac{1}{b_t^2}\left( \frac{a_t^2+1}{2c_t}+ \frac{c_t}{2}K
\right).
\end{split}%
\end{equation}
We specialize to the case that $a=0$ and substitute $B(t)=b(t)^2$.
By Proposition~\ref{proposition:torsion_flow_simple_PDE} the (unnormalized) torsion flow reduces to ODE
\begin{equation*}
\begin{split}
\dot c &= -\left( \frac{2ac}{(a^2+1)} \right)^2 \frac{K}{b^2} +\frac{1}{b^2}%
\left( \frac{a^2+1}{c} +c\frac{a^2-1}{a^2+1}K \right) \frac{(1-a^2)c}{a^2+1}
=\frac{1-c^2K}{B} \\
\dot B &= -cK-\frac{1}{c} \\
c(0) &= c_0 \\
B(0) &= (b_0)^2.
\end{split}%
\end{equation*}
If $Kc_0^2\neq 1$, then the solution to this system is given by 
\begin{equation*}
\begin{split}
c(t)&=c_0 e^{ \frac{(1-Kc_0^2)t}{b_0^2c_0}} \\
B(t)&=\frac{ Kc_0^2\left( e^{ \frac{(1-Kc_0^2)t}{b_0^2c_0}} \right)^2-1 }{%
(Kc_0^2-1)e^{ \frac{(1-Kc_0^2)t}{b_0^2c_0}}}(b_0)^2.
\end{split}%
\end{equation*}
If $Kc_0^2=1$, which can only happen if $K>0$, then 
\begin{equation*}
\begin{split}
c(t)&=c_0 \\
B(t)&=b_0^2-\frac{t}{c_0}\left( c_0^2 \cdot K+1 \right).
\end{split}%
\end{equation*}
We draw some conclusions:

\begin{itemize}
\item For $K\leq 0$, the solution exists for all time. For $K=0$ (torus
case), one has the curious property that the Webster curvature is constant.
The torsion is also constant in that case, when measured in our coframe $%
\theta_b,\alpha_b,\beta_b$. For all $K\leq 0$, the torsion flow skews the
complex structure more and more. The limit 
\begin{equation*}
\lim_{t \to\infty} c_{(t)}=\infty,
\end{equation*}
so in the limit, the complex structure blows up.

\item For $K>0$, the solution blows up in finite time because of shrinking: $%
b(t)=\sqrt{B(t)}$ reaches $0$ in finite time. The special case $Kc_0^2=1$
corresponds to vanishing torsion.
\end{itemize}

\begin{remark}
We point out that, with its canonical contact structure ``$pdq$'', only the unit
cotangent bundle of $S^2$ admits a complex structure for which the torsion
vanishes. Indeed, all other unit cotangent bundles of surfaces with constant
Gauss curvature are not K-contact, which is a necessary requirement by the
appendix of Weinstein in~\cite{ChernHamilton}.
\end{remark}

In this specialized case $a=0$, the volume-normalized flow is particularly simple. We have
\begin{equation*}
\begin{split}
\dot c_{(t)} &= 1-Kc_{(t)}^2.
\end{split}%
\end{equation*}
We see the following
\begin{enumerate}
\item if $K>0$, then there exists a torsion free complex structure, namely
for $c_\infty=1/\sqrt{K}$. We see that the torsion flow exists for all time,
and that it converges to this torsion free complex structure.

\item if $K=0$ (the torus case), then $c$ increases linearly. The flow
exists for all time, but the complex structure does not converge.

\item if $K<0$, then $c$ blows up in finite time.
Geometrically, we see by \eqref{eq:torsion_webster_curvature_STSigma} that torsion grows in norm, and the Webster curvature becomes more and more negative. Accordingly, the complex structure blows up. 
\end{enumerate}
Note that $ST^*S^2\cong SO(3)$, so alternatively we can apply Proposition~\ref{prop:convergence_torsion_flow} to the case $K>0$.

\subsubsection{Prequantization structures on $ST^*\Sigma$}
\label{sec:BW_structure}

We consider again the canonical coframe on the unit cotangent bundle with
structure coefficients as in \eqref{eq:coframe_unit_cotangent_bundle} for a
surface with constant Gauss curvature. If $\Sigma$ is \emph{not} a torus,
then we define the following coframe 
\begin{equation*}
\theta_b=b^2\omega^2,\quad \alpha_b=-bK\omega^1,\quad \beta_b=b \omega^3.
\end{equation*}
The resulting contact manifold is known as a prequantization bundle, a circle bundle over a symplectic manifold (here $\Sigma$) whose fibers are periodic Reeb orbits. 
The corresponding structure coefficients are now $%
c^1_{23}=1,c^2_{13}=-K,c^3_{12}=\frac{1}{K}$, and all other coefficients
vanish. By defining $J_{abc}$ as before, we obtain a pseudohermitian
manifold $(ST^*\Sigma,\theta_b,J_{abc})$. Its torsion and Webster curvature
are given by 
\begin{equation*}
\begin{split}
{A^{1 \phantom 1}_{\phantom{1} \bar 1}}^{abc} &=\frac{i}{b^2}\left( \frac{%
a^2+1}{2c}\frac{1}{K}+\frac{c(a+i)}{2(a-i)}K \right), \\
W^{abc}&=\frac{1}{b^2}\left( \frac{a^2+1}{2c}\frac{1}{K}+\frac{c}{2}K
\right).
\end{split}%
\end{equation*}

\begin{remark} If $\Sigma\neq S^2$, then the
resulting contact structure is \emph{not} contactomorphic to the ``$pdq$''-structure from the
previous section. Also, the contact structure is now K-contact, and we can
choose a complex structure with vanishing torsion. Indeed, choose $a=0$, and 
$c=\frac{1}{|K|}$, and the torsion tensor will vanish.
\end{remark}

As in the previous section we specialize to the case that $a=0$ and substitute $B(t)=b(t)^2$.
By Proposition~\ref{proposition:torsion_flow_simple_PDE} the (unnormalized) torsion flow reduces to ODE
\begin{equation*}
\begin{split}
\dot c &= \frac{-Kc^2+\frac{1}{K}}{B} \\
\dot B &= -\left(  \frac{1}{Kc} +cK \right) \\
c(0) &= c_0 \\
B(0) &= (b_0)^2.
\end{split}%
\end{equation*}
The solution is given by
\[
\begin{split}
c(t)&=c_0e^{\frac{1-K^2c_0^2}{B_0c_0K}t} \\
B(t)&=B_0\frac{1-K^2c_0^2e^{\frac{1-K^2c_0^2}{B_0c_0K}2t}}{(1-K^2c_0^2)e^{\frac{1-K^2c_0^2}{B_0c_0K}t}}
\end{split}
\]
If we start the flow at $a_0=0$ and $c_0=\frac{1}{|K|}$, then we have vanishing torsion, and the torsion flow just contracts or expands depending on the sign of the Webster curvature.
We have
$$
B(t)=B_0-\frac{2|K|}{K}t.
$$
The normalized torsion for these homogeneous contact manifolds are covered by Proposition~\ref{prop:convergence_torsion_flow}.

\subsubsection{Heisenberg geometry}
As an explicit, compact example with Heisenberg geometry, consider the $2$-torus with standard symplectic form $(T^2,\Omega=d\phi_1\w d\phi_2)$.
There is a principal circle bundle $p:P\to T^2$ whose connection form $\theta$ satisfies $d\theta=p^*\Omega$.
We see that $(P,\theta,\alpha=d\phi_1,\beta=d\phi_2)$ is a homogeneous contact manifold of Heisenberg type. Indeed, all structure coefficients except $c^1_{23}$ vanish.

Hence any homogeneous CR-structure has vanishing torsion and Webster curvature. It follows that the torsion flow is constant, so this is an explicit example of a torsion soliton, namely a steady breather, see Corollary~\ref{c51}, case (i).

\section{Entropy functionals}
\label{sec:entropy}
The following section discusses entropy functionals on a closed $3$-dimensional pseudohermitian manifold $(M,J,\theta )$.

\subsection{The Entropy $\mathcal{F}$-Functional}

\label{sec:F_functional}

Let $(M,J,\theta )$ be a closed pseudohermitian $3$-manifold. In this
section, we study the monotonicity property of the $\mathcal{F}$-functional 
\begin{equation*}
\begin{array}{c}
\mathcal{F}(J_{(t)},\theta _{(t)},\varphi _{(t)})=\int_{M}(W+\left\vert
\nabla _{b}\varphi \right\vert _{J,\theta }^{2})e^{-\varphi }d\mu%
\end{array}%
\end{equation*}%
with the constraint%
\begin{equation*}
\begin{array}{c}
\int_{M}e^{-\varphi }d\mu =1%
\end{array}%
\end{equation*}%
under the coupled torsion flow \eqref{eq:coupled_torsion_flow}.

%TCIMACRO{\TeXButton{Proof}{\proof} }%
%BeginExpansion
\proof
%EndExpansion
We compute $\frac{\partial}{\partial t} \nabla_b \phi$ with Equation~\eqref{eq:variationZ1} and use the result to obtain
\begin{equation}
\begin{array}{c}
\text{{\small $\frac{\partial }{\partial t}$}}\left \vert \nabla _{b}\varphi
\right \vert _{J,\theta }^{2}=4\re \left( iE_{\overline{1}\overline{1}%
}\varphi _{1}\varphi _{1}\right) +2\langle \nabla _{b}\varphi ,\nabla
_{b}\varphi _{t}\rangle _{J,{\theta }}-2\eta _{(t)}\left \vert \nabla
_{b}\varphi \right \vert _{J,\theta }^{2}.
\end{array}
\label{18c}
\end{equation}%
By \eqref{eq:coupled_torsion_flow} we find
\begin{equation}
\begin{array}{c}
\frac{\partial }{\partial t}d\mu =4\eta _{(t)}d\mu .%
\end{array}
\label{18a}
\end{equation}%
Use these formulas together with a variation formula for the Webster curvature to compute the variation of the $\mathcal F$-functional,
\begin{equation}
\begin{array}{l}
-\frac{1}{2}\text{{\small $\frac{d}{dt}\mathcal{F}(J_{(t)},\theta
_{(t)},\varphi _{(t)})$}} \\ 
=-\int_{M}\eta _{(t)}[{W+}\left \vert \nabla _{b}\varphi \right \vert
_{J,\theta }^{2}]e^{-\varphi }d\mu +2\int_{M}(\Delta _{b}\eta
_{(t)})e^{-\varphi }d\mu \\ 
\ \ +\frac{1}{2}\int_{M}(2{\Delta _{b}}\varphi -\left \vert \nabla
_{b}\varphi \right \vert _{J,\theta }^{2}+W)\varphi _{t}e^{-\varphi }d\mu \\ 
\ \ -2\int_{M}\re \left( iE_{\overline{1}\overline{1}}\varphi
_{1}\varphi _{1}\right) e^{-\varphi }d\mu -\int_{M}\re \left( iE_{11,%
\overline{1}\overline{1}}-A_{11}E_{\overline{1}\overline{1}}\right)
e^{-\varphi }d\mu \\ 
=\int_{M}(\frac{1}{2}\varphi _{t}-\eta _{(t)})(2{\Delta _{b}}\varphi -\left
\vert \nabla _{b}\varphi \right \vert _{J,\theta }^{2}+W)e^{-\varphi }d\mu
\\ 
\ \ +\int_{M}\re [\left( A_{11}-i\varphi _{11}-i\varphi _{1}\varphi
_{1}\right) E_{\overline{1}\overline{1}}]e^{-\varphi }d\mu \\ 
=\int_{M}\eta _{(t)}(2{\Delta _{b}}\varphi -\left \vert \nabla _{b}\varphi
\right \vert _{J,\theta }^{2}+W)e^{-\varphi }d\mu \\ 
\ \ \ +\int_{M}\re [\left( A_{11}-i\varphi _{11}-i\varphi _{1}\varphi
_{1}\right) E_{\overline{1}\overline{1}}]e^{-\varphi }d\mu .\ 
\end{array}
\label{18}
\end{equation}%
We first set $E_{11}=e^{\varphi }(A_{11}-i\varphi _{11}-i\varphi _{1}\varphi
_{1})$ and $\eta _{(t)}=e^{\varphi }(2{\Delta _{b}}\varphi -\left \vert
\nabla _{b}\varphi \right \vert _{J,\theta }^{2}+W),$ then 
\begin{equation*}
\begin{array}{ccl}
-\frac{1}{2}\text{{\small $\frac{d}{dt}\mathcal{F}(J_{(t)},\theta
_{(t)},\varphi _{(t)})$}} & = & \int_{M}(2{\Delta _{b}}\varphi -\left \vert
\nabla _{b}\varphi \right \vert _{J,\theta }^{2}+W)^{2}d\mu \\ 
&  & +\int_{M}|A_{11}-i\varphi _{11}-i\varphi _{1}\varphi _{1}|^{2}d\mu \\ 
& \geq & 0.%
\end{array}%
\end{equation*}

The monotonicity formula is strict unless%
\begin{equation*}
A_{11}-i\varphi _{11}-i\varphi _{1}\varphi _{1}=0\text{ \ and \ }2{\Delta
_{b}}\varphi -\left \vert \nabla _{b}\varphi \right \vert _{J,\theta
}^{2}+W=0.
\end{equation*}%
Moreover, up to a contact transformation $\widetilde{\theta }=e^{-\varphi
}\theta $%
\begin{equation*}
\widetilde{A_{11}}=0,\ \ \widetilde{W}=0.
\end{equation*}%
This completes the proof of Theorem \ref{t2}. 
%TCIMACRO{\TeXButton{End Proof}{\endproof}}%
%BeginExpansion
\endproof%
%EndExpansion

\subsection{The Entropy $\mathcal{W}^{\pm }$-Functionals}

\label{sec:Wfunctional}

We study the monotonicity property of the $\mathcal{W}^{+}$-functional 
\begin{equation*}
\begin{array}{c}
\mathcal{W}^{+}(J_{(t)},\theta _{(t)},\varphi _{(t)},\tau
_{(t)})=\int_{M}[\tau (W+\left \vert \nabla _{b}\varphi \right \vert
_{J,\theta }^{2})+\frac{1}{2}\varphi -1](4\pi \tau )^{-2}e^{-\varphi }d\mu%
\end{array}%
\end{equation*}%
with the constraint 
\begin{equation*}
\begin{array}{c}
\int_{M}(4\pi \tau )^{-2}e^{-\varphi }d\mu =1%
\end{array}%
\end{equation*}%
under the coupled torsion flow (\ref{2008-3}).

%TCIMACRO{\TeXButton{Proof}{\proof} }%
%BeginExpansion
\proof
%EndExpansion
Following the same computations as in the proof of Theorem \ref{t2}, we can
derive that%
\begin{equation*}
\begin{array}{l}
\text{{\small $\frac{d}{dt}$}}\int_{M}[\tau (W+\left \vert \nabla
_{b}\varphi \right \vert _{J,\theta }^{2})-1](4\pi \tau )^{-2}e^{-\varphi
}d\mu \\ 
=\int_{M}\frac{d\tau }{dt}(W+\left \vert \nabla _{b}\varphi \right \vert
_{J,\theta }^{2})(4\pi \tau )^{-2}e^{-\varphi }d\mu \\ 
\ \ \text{\ }-2\tau \int_{M}[\eta _{(t)}({W+}\left \vert \nabla _{b}\varphi
\right \vert _{J,\theta }^{2})+2{\Delta _{b}}\eta _{(t)}-4\langle \nabla
_{b}\varphi ,\nabla _{b}\eta _{(t)}\rangle ](4\pi \tau )^{-2}e^{-\varphi
}d\mu \\ 
\ \ \text{\ }+2\tau \int_{M}\re (iE_{11,\overline{1}\overline{1}%
}-A_{11}E_{\overline{1}\overline{1}}+2iE_{\overline{1}\overline{1}}\varphi
_{1}\varphi _{1})(4\pi \tau )^{-2}e^{-\varphi }d\mu \\ 
\text{ \ \ }+\int_{M}[\tau (W+\left \vert \nabla _{b}\varphi \right \vert
_{J,\theta }^{2}-1](4\eta _{(t)}-2\tau ^{-1}\frac{d\tau }{dt}-\varphi
_{t})(4\pi \tau )^{-2}e^{-\varphi }d\mu \\ 
=2\int_{M}(W+\left \vert \nabla _{b}\varphi \right \vert _{J,\theta
}^{2})(4\pi \tau )^{-2}e^{-\varphi }d\mu \\ 
\ \ \text{\ }-2\tau \int_{M}\eta _{(t)}(2{\Delta _{b}}\varphi -\left \vert
\nabla _{b}\varphi \right \vert _{J,\theta }^{2}+W)(4\pi \tau
)^{-2}e^{-\varphi }d\mu \\ 
\ \ \ -2\tau \int_{M}\re[\left( A_{11}-i\varphi _{11}-i\varphi
_{1}\varphi _{1}\right) E_{\overline{1}\overline{1}}](4\pi \tau
)^{-2}e^{-\varphi }d\mu \\ 
=2\int_{M}({W+}\left \vert \nabla _{b}\varphi \right \vert _{J,\theta
}^{2})(4\pi \tau )^{-2}e^{-\varphi }d\mu -2\int_{M}(\eta _{(t)}-\tau
^{-1})(4\pi \tau )^{-2}e^{-\varphi }d\mu \\ 
\ \ \ -2\int_{M}(2{\Delta _{b}}\varphi -\left \vert \nabla _{b}\varphi
\right \vert _{J,\theta }^{2}+W)(4\pi \tau )^{-2}e^{-\varphi }d\mu \\ 
\ \ \ -2\tau \int_{M}(\eta _{(t)}-\tau ^{-1})(2{\Delta _{b}}\varphi -\left
\vert \nabla _{b}\varphi \right \vert _{J,\theta }^{2}+W-\tau ^{-1})(4\pi
\tau )^{-2}e^{-\varphi }d\mu \\ 
\ \ \ -2\tau \int_{M}\re[\left( A_{11}-i\varphi _{11}-i\varphi
_{1}\varphi _{1}\right) E_{\overline{1}\overline{1}}](4\pi \tau
)^{-2}e^{-\varphi }d\mu \\ 
=-2\int_{M}(\eta _{(t)}-\tau ^{-1})(4\pi \tau )^{-2}e^{-\varphi }d\mu \\ 
\ \ \ -2\tau \int_{M}(\eta _{(t)}-\tau ^{-1})(2{\Delta _{b}}\varphi -\left
\vert \nabla _{b}\varphi \right \vert _{J,\theta }^{2}+W-\tau ^{-1})(4\pi
\tau )^{-2}e^{-\varphi }d\mu \\ 
\ \ \ -2\tau \int_{M}\re[\left( A_{11}-i\varphi _{11}-i\varphi
_{1}\varphi _{1}\right) E_{\overline{1}\overline{1}}](4\pi \tau
)^{-2}e^{-\varphi }d\mu \\ 
=-2\int_{M}(2{\Delta _{b}}\varphi -\left \vert \nabla _{b}\varphi \right
\vert _{J,\theta }^{2}+W-\tau ^{-1})(4\pi \tau )^{-2}e^{-\varphi }d\mu \\ 
\ \ \ -2\tau \int_{M}(2{\Delta _{b}}\varphi -\left \vert \nabla _{b}\varphi
\right \vert _{J,\theta }^{2}+W-\tau ^{-1})^{2}(4\pi \tau )^{-2}e^{-\varphi
}d\mu \\ 
\ \ \text{\ }\ -2\tau \int_{M}|A_{11}-i\varphi _{11}-i\varphi _{1}\varphi
_{1}|^{2}(4\pi \tau )^{-2}e^{-\varphi }d\mu .%
\end{array}%
\end{equation*}%
Here we have used the following identities : 
\begin{equation*}
\begin{array}{c}
\int_{M}({\Delta _{b}}\eta _{(t)})(4\pi \tau )^{-2}e^{-\varphi }d\mu
=\int_{M}\eta _{(t)}(\left \vert \nabla _{b}\varphi \right \vert _{J,\theta
}^{2}-{\Delta _{b}}\varphi )(4\pi \tau )^{-2}e^{-\varphi }d\mu%
\end{array}%
\end{equation*}%
and 
\begin{equation*}
\int_{M}({W+}\left \vert \nabla _{b}\varphi \right \vert _{J,\theta
}^{2})(4\pi \tau )^{-2}e^{-\varphi }d\mu =\int_{M}(2{\Delta _{b}}\varphi
-\left \vert \nabla _{b}\varphi \right \vert _{J,\theta }^{2}+W)(4\pi \tau
)^{-2}e^{-\varphi }d\mu .
\end{equation*}

On the other hand,%
\begin{equation*}
\begin{array}{l}
\frac{1}{2}\frac{d}{dt}\int_{M}\varphi (4\pi \tau )^{-2}e^{-\varphi }d\mu \\ 
=2\int_{M}(2{\Delta _{b}}\varphi -\left \vert \nabla _{b}\varphi \right
\vert _{J,\theta }^{2}+W-\tau ^{-1})(4\pi \tau )^{-2}e^{-\varphi }d\mu .%
\end{array}%
\end{equation*}%
It follows that 
\begin{equation*}
\begin{array}{l}
-\frac{1}{2}\text{{\small $\frac{d}{dt}\mathcal{W}^{+}(J_{(t)},\theta
_{(t)},\varphi _{(t)},\tau _{(t)})$}} \\ 
=\tau \int_{M}(2{\Delta _{b}}\varphi -\left \vert \nabla _{b}\varphi \right
\vert _{J,\theta }^{2}+W-\tau ^{-1})^{2}(4\pi \tau )^{-2}e^{-\varphi }d\mu
\\ 
\text{ \ \ }+\tau \int_{M}|A_{11}-i\varphi _{11}-i\varphi _{1}\varphi
_{1}|^{2}(4\pi \tau )^{-2}e^{-\varphi }d\mu .%
\end{array}%
\end{equation*}%
Moreover, the monotonicity formula is strict unless%
\begin{equation*}
A_{11}-i\varphi _{11}-i\varphi _{1}\varphi _{1}=0\text{\ \ and\ \ }2{\Delta
_{b}}\varphi -\left \vert \nabla _{b}\varphi \right \vert _{J,\theta
}^{2}+W-\tau ^{-1}=0.
\end{equation*}%
This completes the proof of Theorem \ref{t3}. 
%TCIMACRO{\TeXButton{End Proof}{\endproof}}%
%BeginExpansion
\endproof%
%EndExpansion

Next we study the monotonicity property of $\mathcal{W}^{-}$-functional 
\begin{equation*}
\begin{array}{c}
\mathcal{W}^{-}(J_{(t)},\theta _{(t)},\varphi _{(t)},\tau
_{(t)})=\int_{M}[\tau (W+\left \vert \nabla _{b}\varphi \right \vert
_{J,\theta }^{2})-\frac{1}{2}\varphi +1](4\pi \tau )^{-2}e^{-\varphi }d\mu%
\end{array}%
\end{equation*}%
with the constraint%
\begin{equation*}
\begin{array}{c}
\int_{M}(4\pi \tau )^{-2}e^{-\varphi }d\mu =1%
\end{array}%
\end{equation*}%
under the coupled torsion flow (\ref{2008-2}).

%TCIMACRO{\TeXButton{Proof}{\proof} }%
%BeginExpansion
\proof
%EndExpansion
Following the same computations as in the proof of Theorem \ref{t3}, we can
derive that%
\begin{equation*}
\begin{array}{l}
\text{{\small $\frac{d}{dt}$}}\int_{M}[\tau (W+\left \vert \nabla
_{b}\varphi \right \vert _{J,\theta }^{2})+1](4\pi \tau )^{-2}e^{-\varphi
}d\mu \\ 
=\int_{M}\frac{d\tau }{dt}({W+}\left \vert \nabla _{b}\varphi \right \vert
_{J,\theta }^{2})(4\pi \tau )^{-2}e^{-\varphi }d\mu \\ 
\ \ \text{\ }-2\tau \int_{M}[\eta _{(t)}({W+}\left \vert \nabla _{b}\varphi
\right \vert _{J,\theta }^{2})+2{\Delta _{b}}\eta _{(t)}-4\langle \nabla
_{b}\varphi ,\nabla _{b}\eta _{(t)}\rangle ](4\pi \tau )^{-2}e^{-\varphi
}d\mu \\ 
\ \ \text{\ }+2\tau \int_{M}\re(iE_{11,\overline{1}\overline{1}%
}-A_{11}E_{\overline{1}\overline{1}}+2iE_{\overline{1}\overline{1}}\varphi
_{1}\varphi _{1})(4\pi \tau )^{-2}e^{-\varphi }d\mu \\ 
\ \ \text{\ }+\int_{M}[\tau ({W+}\left \vert \nabla _{b}\varphi \right \vert
_{J,\theta }^{2})-1](4\eta _{(t)}-2\tau ^{-1}\frac{d\tau }{dt}-\varphi
_{t})(4\pi \tau )^{-2}e^{-\varphi }d\mu \\ 
=-2\int_{M}({W+}\left \vert \nabla _{b}\varphi \right \vert _{J,\theta
}^{2})(4\pi \tau )^{-2}e^{-\varphi }d\mu \\ 
\ \text{\ }\ -2\tau \int_{M}\eta _{(t)}(2{\Delta _{b}}\varphi -\left \vert
\nabla _{b}\varphi \right \vert _{J,\theta }^{2}+W)(4\pi \tau
)^{-2}e^{-\varphi }d\mu \\ 
\ \ \text{\ }-2\tau \int_{M}\re[\left( A_{11}-i\varphi _{11}-i\varphi
_{1}\varphi _{1}\right) E_{\overline{1}\overline{1}}](4\pi \tau
)^{-2}e^{-\varphi }d\mu \\ 
=2\int_{M}(2{\Delta _{b}}\varphi -\left \vert \nabla _{b}\varphi \right
\vert _{J,\theta }^{2}+W+\tau ^{-1})(4\pi \tau )^{-2}d\mu \\ 
\ \ \text{\ }-2\tau \int_{M}(2{\Delta _{b}}\varphi -\left \vert \nabla
_{b}\varphi \right \vert _{J,\theta }^{2}+W+\tau ^{-1})^{2}(4\pi \tau
)^{-2}d\mu \\ 
\text{ \ \ }-2\tau \int_{M}|A_{11}-i\varphi _{11}-i\varphi _{1}\varphi
_{1}|^{2}(4\pi \tau )^{-2}d\mu .%
\end{array}%
\end{equation*}

On the other hand,%
\begin{equation*}
\begin{array}{l}
-\frac{1}{2}\frac{d}{dt}\int_{M}\varphi (4\pi \tau )^{-2}e^{-\varphi }d\mu
\\ 
=-2\int_{M}(2{\Delta _{b}}\varphi -\left \vert \nabla _{b}\varphi \right
\vert _{J,\theta }^{2}+W+\tau ^{-1})(4\pi \tau )^{-2}d\mu .%
\end{array}%
\end{equation*}%
It follows that 
\begin{equation*}
\begin{array}{l}
-\frac{1}{2}\text{{\small $\frac{d}{dt}\mathcal{W}^{-}(J_{(t)},\theta
_{(t)},\varphi _{(t)},\tau _{(t)})$}} \\ 
=\tau \int_{M}(2{\Delta _{b}}\varphi -\left \vert \nabla _{b}\varphi \right
\vert _{J,\theta }^{2}+W+\tau ^{-1})^{2}(4\pi \tau )^{-2}d\mu \\ 
\text{ \ \ }+\tau \int_{M}|A_{11}-i\varphi _{11}-i\varphi _{1}\varphi
_{1}|^{2}(4\pi \tau )^{-2}d\mu .%
\end{array}%
\end{equation*}%
Moreover, the monotonicity formula is strict unless%
\begin{equation*}
A_{11}-i\varphi _{11}-i\varphi _{1}\varphi _{1}=0\text{\ \ and\ \ }2{\Delta
_{b}}\varphi -\left \vert \nabla _{b}\varphi \right \vert _{J,\theta
}^{2}+W+\tau ^{-1}=0.
\end{equation*}%
This completes the proof of Theorem \ref{t4}. 
%TCIMACRO{\TeXButton{End Proof}{\endproof}}%
%BeginExpansion
\endproof%
%EndExpansion

\section{Appendix}

\subsection{Computations}

\begin{proof}[Proof of Proposition~\protect\ref%
{proposition:Tanaka_connection_homogeneous_contact}]
As in Formula~\eqref{eq:deformation_frame}, put 
\begin{equation*}
\theta_b=b^2\omega^1,\quad \alpha_b=b \omega^2,\quad \beta_b=b\omega^3,\quad
T_b=\frac{1}{b^2}X_1,\quad U_b=\frac{1}{b}X_2,\quad V_b=\frac{1}{b}X_3.
\end{equation*}
We first work out some general formulas. Given the second equation of %
\eqref{eq:defining_Tanaka_Webster}, we can assume without loss of generality
that 
\begin{equation}  \label{eq:TW_connection_form}
\omega^{\phantom{1}1}_{1\phantom{1}}=ic_\theta \theta+ic_Z \theta^1+i\bar
c_Z \theta^1,
\end{equation}
with $c_\theta$ a real function. Now we insert our frame to get explicit
equations 
\begin{equation}  \label{eq:structure_functions_general_Tanaka_connection}
\begin{split}
d\theta^1(Z_1,Z_{\bar 1})&= \omega^{\phantom{1}1}_{1\phantom{1}}(Z_{\bar
1})=i\bar c_{Z} \\
d\theta^1(T,Z_1)&=-ic_\theta \\
d\theta^1(T,Z_{\bar 1})&=A^{1\phantom{1}}_{\phantom{1}\bar 1}.
\end{split}%
\end{equation}

The Webster curvature is determined by the second structure equation, 
\begin{equation*}
d\omega _{1\phantom{1}}^{\phantom{1}1}=W\theta ^{1}{\wedge }\theta ^{\bar{1}%
}+2i\im(A_{\phantom{1}1,\bar{1}}^{\bar{1}}\theta ^{1}{\wedge }\theta ).
\end{equation*}%
Wedging this equation with $\theta $ gives $\theta {\wedge }d\omega _{1%
\phantom{1}}^{\phantom{1}1}=W\theta {\wedge }\theta ^{1}{\wedge }\theta ^{%
\bar{1}}$, which can be rewritten by using \eqref{eq:TW_connection_form} to
write out $d\omega _{1\phantom{1}}^{\phantom{1}1}$. We find $\theta {\wedge }%
d\omega _{1\phantom{1}}^{\phantom{1}1}=\left( -c_{\theta }-2|c_{Z}|^{2}-2\im%
(Z_{1}(\bar{c}_{Z})\,)\right) \theta {\wedge }\theta ^{1}{\wedge }\theta ^{%
\bar{1}}$, and conclude 
\begin{equation}
W=-c_{\theta }-2|c_{Z}|^{2}-2\im(Z_{1}(\bar{c}_{Z})\,).
\label{eq:Formula_Webster_curvature_frame}
\end{equation}

We now determine these coefficients.
For $c_{\theta }^{abc}$, we use the second equation from~%
\eqref{eq:structure_functions_general_Tanaka_connection} to get 
\[
\begin{split}
-ic_{\theta _{b}}^{abc} &=-\frac{i(a+i)}{2}d\alpha _{b}(T_{b},U_{b})+\frac{%
a^{2}+1}{2c}id\beta _{b}(T_{b},U_{b})-\frac{ci}{2}d\alpha _{b}(T_{b},V_{b})+%
\frac{i}{2}(a-i)d\beta _{b}(T_{b},V_{b}) \\
&=
-\frac{i(a+i)}{2b^2} c^2_{12}
+i\frac{a^2+1}{2c b^2}c^3_{12}
-\frac{ci}{2b^2}c^2_{13}
+\frac{i}{2b^2}(a-i)c^3_{13}.
\end{split}
\]
Use Lemma~\ref{lemma:nice_basis} to see that $c^2_{12}=c^3_{13}=0$.
For the $c_Z$-component, we use the first equation from %
\eqref{eq:structure_functions_general_Tanaka_connection} we compute 
\begin{equation*}
\begin{split}
i\bar{c_Z}&=d \theta^1(Z_1,Z_{\bar 1} ) \\
&=\frac{1}{\sqrt{2c(a^2+1)}}\left( -\frac{i}{2(a-i)}d \alpha_b(U_b,V_b)
\left( (a^2+1)c(a+i)-c(a-i)(a^2+1) \right) \right. \\
&\phantom{=}\left. +\frac{i}{2c} d\beta_b(U_b,V_b) \left(
(a^2+1)c(a+i)-c(a-i)(a^2+1) \right) \right) \\
&=\frac{1}{\sqrt{2c(a^2+1)}}\left( c(a+i)c^2_{23}-(a^2+1)c^3_{23} \right) .
\end{split}%
\end{equation*}

By the above formulas, these coefficients determine the connection form $\omega_1^{\phantom{1}1}$ and the curvature $W$.
For the torsion we use formulas \eqref{eq:cpx_frame}, \eqref{eq:cpx_coframe} and \eqref{eq:structure_functions_general_Tanaka_connection}, and find
\[
\begin{split}
A^1_{\phantom{1}\bar 1}& =d\theta^1_b(T_b,Z_{b,\bar 1} ) \\
&=\frac{i}{2(a-i)}d\alpha_b(T_b(a^2+1)U_b+c(a+i)V_b)
+\frac{i}{2c}d\beta_b(T_b,(a^2+1)U_b+c(a+i)V_b) \\
&=\frac{i}{2(a-i)b^2}\left(
(a^2+1)c^2_{12}+c(a+i)c^2_{13}
\right)
+
\frac{i}{2cb^2}\left(
(a^2+1)c^3_{12}+c(a+i)c^3_{13}.
\right)
\end{split}
\]
Combining this with $c^2_{12}=c^3_{13}=0$ gives the desired expression for the torsion.
\end{proof}

\subsection{Variation formulas}
Let $\theta _{(t)}$ be a family of smooth contact forms and $J_{(t)}$ be a
family of CR structures on $(M,J,\theta )$. We consider the following flow
on a closed pseudohermitian $(2n+1)$-manifold $(M,J,\theta )\times \lbrack
0,T)$: 
\begin{equation}
\left\{ 
\begin{array}{l}
\partial _{t}J_{(t)}=2E, \\ 
\partial _{t}\theta _{(t)}=2\eta _{(t)}\theta _{(t)}.%
\end{array}%
\right.  \label{PTF}
\end{equation}%
Here $J=i\theta ^{\alpha }\otimes Z_{\alpha }-i\theta ^{\overline{\alpha }%
}\otimes Z_{\overline{\alpha }}$ and $E=E_{\alpha }{}^{\overline{\beta }%
}\theta ^{\alpha }\otimes Z_{\overline{\beta }}+E_{\overline{\alpha }%
}{}^{\beta }\theta ^{\overline{\alpha }}\otimes Z_{\beta }$.

We start by deriving some evolution equations under the general flow \eqref%
{PTF} before specifying to the torsion flow, for which $E=A_J$ (the torsion tensor), and $\eta=-W$ (the Webster curvature).
All computations will be done in a local frame.
Fix a unit-length local frame $%
\{Z_{\alpha }\}$ and let $\{\theta ^{\alpha }\}$ be its dual admissible $1$%
-form. 
Let $Z_{\alpha (t)},\theta _{(t)}^{\alpha }$ denote a unit-length
frame and dual admissible $1$-form with respect to $(J_{(t)},\theta _{(t)})$.
Since $\theta ^{\alpha }(Z_{\beta (t)})$ is a positive real function, we can write $\overset{\centerdot }{Z_{\alpha }}=F_{\alpha
}{}^{\beta }Z_{\beta }+G_{\alpha }{}^{\overline{\beta }}Z_{\overline{\beta }%
} $ where $F_{\alpha }{}^{\beta }$ are real and $G_{\alpha }{}^{\overline{%
\beta }}$ are complex. The fact that $Z_{\alpha (t)}$ is an orthonormal
frame means that 
\begin{equation*}
\delta _{\alpha \beta }=-id\theta _{(t)}(Z_{\alpha (t)}\wedge Z_{\overline{%
\beta }(t)}).
\end{equation*}%
By differentiating and substituting the above expression for $\overset{\centerdot }{Z_{\alpha }},$
we obtain $F_{\alpha }{}^{\beta }=-\eta \delta _{\alpha }^{\beta }.$
By differentiating $J_{(t)}Z_{\alpha (t)}=iZ_{\alpha (t)}$ we find 
\begin{equation*}
0=\overset{\centerdot }{J}Z_{\alpha }+J\overset{\centerdot }{Z_{\alpha }}-i%
\overset{\centerdot }{Z_{\alpha }}=2E_{\alpha }{}^{\overline{\beta }}Z_{%
\overline{\beta }}-2iG_{\alpha }{}^{\overline{\beta }}Z_{\overline{\beta }},
\end{equation*}%
so 
\begin{equation}
\overset{\centerdot }{Z_{\alpha }}=-\eta Z_{\alpha }-iE_{\alpha }{}^{%
\overline{\beta }}Z_{\overline{\beta }}.  
\label{eq:variationZ1}
\end{equation}%
Now differentiate the identities%
\begin{equation*}
d\theta _{(t)}=ih_{\alpha \bar{\beta}}\theta _{(t)}^{\alpha }\wedge \theta
_{(t)}^{\overline{\beta }},\text{ \ }\theta _{(t)}^{\alpha }(Z_{\beta
(t)})=\delta _{\beta }^{\alpha },\text{ \ \textrm{and} \ }\theta
_{(t)}^{\alpha }(Z_{\overline{\beta }(t)})=0,
\end{equation*}%
to deduce that 
\begin{equation}
\overset{\centerdot }{{\theta }}{{^{\alpha }}}=2i\eta ^{\alpha }\theta +\eta
\theta ^{\alpha }-iE^{\alpha }{_{\overline{\beta }}}\theta ^{\overline{\beta 
}}.  
\label{24}
\end{equation}
Now we differentiate \eqref{eq:defining_Tanaka_Webster} to obtain%
\begin{equation}
d\overset{\centerdot }{{\theta }}{{^{\alpha }}}=\overset{\centerdot }{{%
\theta }}{{^{\gamma }}\wedge }{\omega _{\gamma }}^{\alpha }+\theta ^{\gamma }%
{\wedge }\overset{\centerdot }{{\omega }}{_{\gamma }}^{\alpha }+\overset{%
\centerdot }{{A}}_{\overline{\alpha }\overline{\gamma }}{\theta }{\wedge }{%
\theta ^{\overline{\gamma }}+{A}}_{\overline{\alpha }\overline{\gamma }}%
\overset{\centerdot }{{\theta }}{{\wedge }{\theta ^{\overline{\gamma }}+{A}_{%
\overline{\alpha }\overline{\gamma }}\theta }{\wedge }}\overset{\centerdot }{%
{\theta }}{{^{\overline{\gamma }}}.}  \label{25}
\end{equation}%
Since we will derive an identity involving tensors, we will take an adapted frame satisfying ${\omega _{\gamma }}^{\alpha
}=0$ at a point.
Plug in \eqref{24} and consider the ${\theta }{\wedge }{\theta ^{%
\overline{\gamma }}}$ terms to obtain
\begin{equation}
\overset{\centerdot }{{A}}_{\overline{\alpha }\overline{\gamma }}=-2(i\eta _{%
\overline{\alpha }\overline{\gamma }}+\eta {A}_{\overline{\alpha }\overline{%
\gamma }})-iE{_{\overline{\alpha }\overline{\gamma }}},_{0}.  
\label{26}
\end{equation}%
On the other hand, contracting (\ref{25}) with $Z_{\beta }$ and then
contracting with $h^{\beta \overline{\alpha }},$ computing modulo $\theta
^{\gamma }$ yields%
\begin{equation*}
\overset{\centerdot }{{\omega }}{_{\alpha }}^{\alpha }=i(A{_{\alpha }}^{%
\overline{\gamma }}E{_{\overline{\gamma }}}^{\alpha }+A{_{\overline{\alpha }}%
}^{\gamma }E{_{\gamma }}^{\overline{\alpha }}+\eta {_{\overline{\alpha }}}^{%
\overline{\alpha }})\theta -[(n+2)\eta _{\overline{\alpha }}+iE{_{\overline{%
\gamma }\overline{\alpha }}},^{\overline{\gamma }}]{\theta ^{\overline{%
\alpha }}}\text{ }\mod \theta ^{\gamma }.
\end{equation*}%
Since $\overset{\centerdot }{{\omega }}{_{\alpha }}^{\alpha }$ is pure
imaginary, we have
\begin{equation}
\begin{array}{lll}
\overset{\centerdot }{{\omega }}{_{\alpha }}^{\alpha } & = & i(A{_{\alpha }}%
^{\overline{\gamma }}E{_{\overline{\gamma }}}^{\alpha }+A{_{\overline{\alpha 
}}}^{\gamma }E{_{\gamma }}^{\overline{\alpha }}+\Delta _{b}\eta )\theta \\ 
&  & +[(n+2)\eta _{\alpha }-iE{_{\gamma \alpha }},^{\gamma }]{\theta
^{\alpha }-[(n+2)\eta _{\overline{\alpha }}+iE{_{\overline{\gamma }\overline{%
\alpha }}},^{\overline{\gamma }}]\theta ^{\overline{\alpha }}.}%
\end{array}
\label{27}
\end{equation}%
Differentiate the structure equation \eqref{eq:curvature_Tanaka_connection} with respect to $t$ and
consider only the $\theta ^{\rho }\wedge \theta ^{\bar{\sigma}}$ terms.
This gives%
\begin{equation}
\begin{array}{ccl}
\overset{\centerdot }{R}_{\rho \bar{\sigma}} & = & -(A{_{\alpha }}^{%
\overline{\gamma }}E{_{\overline{\gamma }}}^{\alpha }+A{_{\overline{\alpha }}%
}^{\gamma }E{_{\gamma }}^{\overline{\alpha }}+\Delta _{b}\eta )h_{\rho \bar{%
\sigma}}-2\eta R_{\rho \bar{\sigma}} \\ 
&  & -[(n+2)\eta _{\rho }-iE{_{\gamma \rho }},^{\gamma }],_{\overline{\sigma 
}}{-[(n+2)\eta _{\overline{\sigma }}+iE{_{\overline{\gamma }\overline{\sigma 
}}},^{\overline{\gamma }}]},_{\rho }%
\end{array}
\label{28}
\end{equation}%
After contracting with $h^{\rho \bar{\sigma}}$ we get%
\begin{equation}
\begin{array}{ccl}
\overset{\centerdot }{W} & = & i(E{_{\gamma \alpha }},^{\gamma \alpha }-{E{_{%
\overline{\gamma }\overline{\alpha }}},^{\overline{\gamma }\overline{\alpha }%
}})-n(A{_{\alpha }}^{\overline{\gamma }}E{_{\overline{\gamma }}}^{\alpha }+A{%
_{\overline{\alpha }}}^{\gamma }E{_{\gamma }}^{\overline{\alpha }}) \\ 
&  & -[2(n+1)\Delta _{b}\eta +2W\eta ] \\ 
& = & 2\re \left( iE{_{\gamma \alpha }},^{\gamma \alpha }-nA{_{\alpha }}%
^{\overline{\gamma }}E{_{\overline{\gamma }}}^{\alpha }\right)
-[2(n+1)\Delta _{b}\eta +2{W\eta ].}%
\end{array}
\label{eq:variationW}
\end{equation}

Recall that the transformation law of the connection under a change of pseudohermitian
structure was computed in \cite[Sec. 5]{l1}. 
Let $\hat{\theta}=e^{2f}\theta$ be another pseudohermitian structure. Then we can define an admissible coframe
by $\hat{\theta}^{\alpha }=e^{f}(\theta ^{\alpha }+2if^{\alpha }\theta )$.
With respect to this coframe, the connection $1$-form and the pseudohermitian torsion
are given by%
\begin{equation}
\label{30}
\begin{split}
\widehat{{\omega }}{_{\beta }}^{\alpha } &={\omega _{\beta }}^{\alpha
}+2(f_{\beta }\theta ^{\alpha }-f^{\alpha }\theta _{\beta })+\delta _{\beta
}^{\alpha }(f_{\gamma }\theta ^{\gamma }-f^{\gamma }\theta _{\gamma })
 \\
&\phantom{=}+i(f^{\alpha }{}_{\beta }+f_{\beta }{}^{\alpha }+4\delta _{\beta }^{\alpha
}f_{\gamma }f^{\gamma })\theta ,  
\end{split}
\end{equation}
and
\begin{equation}
\widehat{{A}}{_{\alpha \beta }=}e^{-2f}({A_{\alpha \beta }+2i}f_{\alpha
\beta }-4if_{\alpha }f_{\beta }),  \label{31}
\end{equation}%
respectively. 
Thus the Webster curvature transforms as
\begin{equation}
\widehat{W}=e^{-2f}(W-2(n+1)\Delta _{b}f-4n(n+1)f_{\gamma }f^{\gamma }).
\label{32}
\end{equation}%
Here covariant derivatives on the right side are taken with respect to the
pseudohermitian structure $\theta $ and an admissible coframe $\theta
^{\alpha }$. Note also that the dual frame of $\{\hat{\theta},\hat{\theta}%
^{\alpha },\hat{\theta}^{\overline{\alpha }}\}$ is given by $\{\widehat{T},%
\widehat{Z}_{\alpha },\widehat{Z}_{\overline{\alpha }}\}$, where

\begin{equation*}
\widehat{T}=e^{-2f}(T+2if^{\overline{\gamma }}Z_{\overline{\gamma }%
}-2if^{\gamma }Z_{\gamma }),\text{ \ }\widehat{Z}_{\alpha }=e^{-f}Z_{\alpha
}.
\end{equation*}%

\end{document}